\documentclass[10pt,leqno]{article}

\usepackage{url}
\usepackage[pagebackref]{hyperref}
\usepackage{amsfonts}
\usepackage{amsthm}
\usepackage{amsmath}
\usepackage{amscd}
\usepackage{amssymb}
\usepackage{mathtools}
\usepackage{tikz}
\usepackage{tikz-cd}
\usepackage{color}
\usetikzlibrary{matrix,arrows}
\usepackage[T2A,T1]{fontenc}
\usepackage[utf8]{inputenc}
\usepackage[russian,english]{babel}

\def\CC {{\mathbb C}}     
\def\RR {{\mathbb R}}     
\def\ZZ {{\mathbb Z}}     


\def\ring#1{\ifmmode \mathaccent'027 #1\else \rm\accent'027 #1\fi}

\def\ol  {\overline}
\def\ul  {\underline}

\def\mc {\mathcal}

\def\tr {\mathrm{tr}}

\def \bd {\begin{diagram}}
\def \ed {\end{diagram}}
\def\be  {\begin{eqnarray}}
\def\ee  {\end{eqnarray}}
\def\ben {\begin{eqnarray*}}
\def\een {\end{eqnarray*}}

\def\bpr {\begin{proof}[Proof]}
\def\epr {\end{proof}}
\def\bsp {\begin{split}}
\def\esp {\end{split}}
\def\bcd {\begin{CD}}
\def\ecd {\end{CD}}

\mathchardef\mhyphen="2D

\newtheorem{theorem}{Theorem}[section]

\newtheorem*{thmdef2}{Theorem/Definition}

\newtheorem{prop}[theorem]{Proposition}
\newtheorem{coro}[theorem]{Corollary}
\theoremstyle{definition}
\newtheorem{remark}[theorem]{Remark}

\newtheorem{ex}[theorem]{Example}
\theoremstyle{plain}

\newtheorem*{theorem2}{Theorem}

\theoremstyle{plain} \newtheorem{thm}[theorem]{Theorem}
\theoremstyle{plain} \newtheorem{lem}[theorem]{Lemma}

\numberwithin{equation}{section}

\newcommand{\Addresses}{{
  \bigskip
  \footnotesize

	(F.~Haiden) \textsc{University of Oxford, Mathematical Institute, Andrew Wiles Building, Woodstock Road, Oxford OX2 6GG, UK} \par\nopagebreak
	\textit{E-mail:} \texttt{Fabian.Haiden@maths.ox.ac.uk}
	\medskip
	
	(L.~Katzarkov) \textsc{Fakult\"at f\"ur Mathematik, Universit\"at Wien, 
	Oskar-Morgenstern-Platz 1, 1090 Wien, Austria, HSE Moscow, and CMS Institute of Mathematics and Informatics, BAS Sofia, Bulgaria}\par\nopagebreak
	\textit{E-mail:} \texttt{lkatzarkov@gmail.com}
	\medskip
	
	(M.~Kontsevich) \textsc{Institut des Hautes \'Etudes Scientifiques,
	35 route de Chartres, 91440 Bures-sur-Yvette, France}\par\nopagebreak
	\textit{E-mail:} \texttt{maxim@ihes.fr}
	\medskip
	
	(P.~Pandit) \textsc{International Centre for Theoretical Sciences (ICTS-TIFR), Survey No. 151, Shivakote, Hesaraghatta Hobli, Bengaluru North 560089, India}\par\nopagebreak
	\textit{E-mail:} \texttt{pranav.pandit@icts.res.in}
	\medskip
}}

\begin{document}

\title{Semistability, modular lattices, and iterated logarithms}
\author{F.~Haiden, L.~Katzarkov, M.~Kontsevich, P.~Pandit}
\maketitle

\begin{abstract}
We provide a complete description of the asymptotics of the gradient flow on the space of metrics on any semistable quiver representation.
This involves a recursive construction of approximate solutions and the appearance of iterated logarithms and a limiting filtration of the representation. 
The filtration turns out to have an algebraic definition which makes sense in any finite length modular lattice.
This is part of a larger project by the authors to study iterated logarithms in the asymptotics of gradient flows, both in finite and infinite dimensional settings.
\end{abstract}

\tableofcontents

\section{Introduction}

This paper consists of two parts. 
The first is lattice-theoretic (lattice in the sense of partial order) and the main result is the existence of a weight-type filtration, depending on finitely many real parameters, in any finite-length modular lattice.
In the second part we study the asymptotic behavior of the gradient flow on the space of Hermitian metrics on a quiver representation, which involves iterated logarithms, i.e. the functions $\log t$, $\log\log t$, $\log\log\log t$, \ldots, and turns out to be controlled by the filtration defined in the first part applied to the lattice of subrepresentations.

Evidence that the case of quiver representation is just one example of a more general theory of asymptotics of certain gradient flows and iterated logarithms can be found in our companion paper~\cite{hkkp_itlog}.
The natural context for the considerations here should be some form of ``categorical K\"ahler geometry'', a geometric enhancement of Bridgeland's notion of stability~\cite{bridgeland07}, which the authors are developing in an ongoing project~\cite{hkkp_kaehler}.

\subsection{Background}

An $n\times n$-matrix, $A$, with complex entries is diagonalizable if and only if there is a Hermitian metric (inner product), $h$, on $\CC^n$ such that $A$ is normal, i.e. $AA^*=A^*A$, when the adjoint is taken with respect to $h$.
A generalization of this fact to quiver representations was discovered by A.D.~King~\cite{king94}.
A quiver, $Q$, is just a finite graph with oriented edges (arrows), and a representation assigns a finite-dimensional vector space $E_i$ over $\CC$ to every vertex $i$ and a linear map $\phi_\alpha:E_i\to E_j$ to every arrow $\alpha:i\to j$.
Representations of a quiver form an abelian category, so in particular the notions of simple and semisimple representation are defined in the usual way.
King's theorem then states that $E$ is semisimple if and only if one can find a metric on each $E_i$ such that
\begin{equation}\label{harmonic_metric_intro}
\sum_{\alpha}[\phi_\alpha^*,\phi_\alpha]=0.
\end{equation}
This result can be seen as a finite-dimensional analog of the celebrated Donaldson and Uhlenbeck--Yau theorem~\cite{donaldson85,uhlenbeck_yau} which relates existence of Hermitian Yang--Mills metrics on a holomorphic vector bundle to slope stability of that bundle.
Both are a special instance of the general \textit{Kempf--Ness principle}~\cite{kempf_ness} which relates quotient constructions in geometric invariant theory and symplectic geometry.

The solutions to \eqref{harmonic_metric_intro} are the minima of the function
\begin{equation}\label{riem_metric}
S(h):=\sum_{\alpha:i\to j}\mathrm{tr}(h_i^{-1}\phi_\alpha^*h_j\phi_\alpha)
\end{equation}
on the space of metrics on the given representation, where $h_i^{-1}\phi_\alpha^*h_j$ is the adjoint of $\phi_\alpha$ with respect to $h$ and $\phi_\alpha^*$ is the adjoint with respect to some arbitrary reference metric. 
Provided $E$ is semisimple we can thus follow a trajectory of the gradient flow 
\begin{equation}\label{ode_intro}
m_ih_i^{-1}\frac{dh_i}{dt}=\sum_{\alpha:i\to j}h_i^{-1}\phi_\alpha^*h_j\phi_\alpha-\sum_{\alpha:j\to i}\phi_\alpha h_j^{-1}\phi_\alpha^*h_i
\end{equation}
to arrive at a solution to \eqref{harmonic_metric_intro}. 
Here the $m_i>0$ are parameters of the homogeneous Riemmannian metric 
\[
\langle v,w\rangle:=\sum_{i\in Q_0}m_i\mathrm{tr}\left(h_i^{-1}vh_i^{-1}w\right)
\]
on the space of Hermitian metrics, $h$, on $E$.
Thus, a natural question arises: \textit{Suppose $E$ is not semisimple, then what is the asymptotic behavior of the gradient flow of the function $S$?}
Roughly what happens is that the metric grows/decays at different rates on different vectors in $E$ and this determines a filtration by subrepresentations.
It turns out that just as the existence of a minimizing metric is controlled by an algebraic criterion, semisimplicity, the asymptotic filtration, which we call \textit{iterated weight filtration}, has a purely algebraic definition which depends only on the partially ordered set of subrepresentations of $E$ together with a finite number of real parameters, and can be generalized to any finite length modular lattice.

\subsection{Weight filtration}

Before giving the general definition of the weight filtration we describe it in the case of a representation of the quiver with a single loop, i.e. just a vector space $V$ with an endomorphism $\phi$.
Without loss of generality we may assume that $\phi$ is nilpotent, otherwise consider each algebraic eigenspace separately.
The weight filtration is then the unique filtration $V_{\leq\lambda}$ of $V$ by $\lambda\in\frac{1}{2}\mathbb Z$ such that $\phi(V_{\leq\lambda})\subset V_{\leq\lambda-1}$ and $\phi^k$ induces an isomorphism $\mathrm{Gr}_{k/2}V\to\mathrm{Gr}_{-k/2}V$ for any positive integer $k$.
This is, up to relabeling, what Griffiths calls the Picard--Lefschetz filtration induced by $\phi$ in \cite{griffiths70}, see also \cite{schmid73}.
When $\phi$ is the logarithm of the unipotent part of the monodromy, it gives the weight filtration on the limiting mixed Hodge structure on the vanishing cohomology of an isolated hypersurface singularity. 
This is the origin of our terminology.

For the reader who is more familiar with abelian categories than modular lattices we state our first main result, the definition of the weight filtration (Theorem~\ref{balanced_chain_thm} in the main text), in this context.

\begin{thmdef2}\label{thm1_intro}
Suppose $\mathcal A$ is an artinian (finite length) abelian category and $X:K_0(\mathcal A)\to \RR$ a homomorphism which is positive on each class of a non-zero object.
For each object $E\in \mathcal A$ there exists a unique filtration
\begin{equation}
0=E_0\subset E_1\subset\ldots\subset E_n=E
\end{equation}
with subquotients $E_k/E_{k-1}\neq 0$ labeled by real numbers $\lambda_1<\ldots<\lambda_n$ such that the following conditions are satisfied.
\begin{enumerate}
\item
The subquotient $E_l/E_{k-1}$ is semisimple for any $1\leq k\leq l\leq n$ with $\lambda_l-\lambda_k<1$.
\item 
The balancing condition
\begin{equation}
\sum_{k=1}^n\lambda_k X(E_k/E_{k-1})=0
\end{equation}
holds.
\item
For any collection of objects $F_k$ with $E_{k-1}\subseteq F_k\subseteq E_k$, $k=1,\ldots,n$, such that $F_l/F_k$ is semisimple for $1\leq k< l\leq n$ with $\lambda_l-\lambda_k\leq 1$, the inequality
\begin{equation}
\sum_{k=1}^n\lambda_kX(F_k/E_{k-1})\leq 0
\end{equation}
holds.
\end{enumerate}
The uniquely defined filtration, depending on $X$, is called the \textit{weight filtration} on $E$.
\end{thmdef2}

There is always a canonical choice for $X$, which is to assign to each object its length (of a Jordan--H\"older filtration).
The filtration is trivial precisely when $E$ is semisimple.

We emphasize that in the case of quiver representations the filtration defined above gives only the first approximation to the \textit{iterated} weight filtration which determines the asymptotics of the gradient flow.
As the name suggests, this is a refinement of the weight filtration which is constructed by iteratively applying the above theorem/definition some finite number of times.
In order to define it, it will be much more convenient to use the language of lattices.

\subsection{Modular lattices and iterated weight filtration}

Suppose $E$ is an object in an artinian abelian category, e.g. a quiver representation.
The set $L$ of subobjects of $E$, partially order by inclusion, enjoys the following properties crucial for our purposes:
\begin{itemize}
\item
$L$ is a \textbf{lattice}: Any two elements $a,b$ have a least upper bound $a\vee b$ and greatest lower bound $a\wedge b$.
\item
\textbf{modularity}: If $a\leq b$ then $(x\wedge b)\vee a=(x\vee a)\wedge b$  for all $x\in L$.
\item
\textbf{finite length}: There is a global upper bound on the length of any chain $a_1<a_2<\ldots<a_n$ of elements in $L$.
\end{itemize}
In the first part of the paper we will work in the general context of finite length modular lattices.
Besides providing a natural level of generality, there are interesting examples of modular lattices which do not come from abelian categories, for instance normal subgroups of a finite group, or semistable subbundles (of the same slope) of a semistable Arakelov bundle.

There is an analog of the Grothendieck group $K_0$ for modular lattices.
Let $[a,b]:=\{x\in L \mid a\leq x\leq b\}$ be the interval from $a$ to $b$.
If $L$ is a modular lattice then denote by $K(L)$ the abelian group with generators $\ol{[a,b]}$, $a\leq b$, and relations
\begin{equation*}
\ol{[a,b]}+\ol{[b,c]}=\ol{[a,c]},\qquad
\ol{[a,a\vee b]}=\ol{[a\wedge b,b]}
\end{equation*}
We let $K^+(L)\subset K(L)$ be the sub-semigroup generated by elements $\ol{[a,b]}$, $a<b$.
If $L$ is moreover finite length then it is a consequence of the Jordan--H\"older--Dedekind theorem that $K(L)$ (resp. $K^+(L)$) is a free abelian group (resp. semigroup).

Note that an object in an abelian category is semisimple if and only if the corresponding lattice of subobjects is \textit{complemented}: For any $a\in L$ there is a $b\in L$ with $a\wedge b=0$ and $a\vee b=1$, where $0$ (resp. $1$) is the minimum (resp. maximum) of $L$.

With the above definitions we are ready to state our first main result in full generality.
This is Theorem~\ref{balanced_chain_thm} in the main text with some of the definitions unwrapped.

\begin{thmdef2}
Let $L\neq\emptyset$ be a finite length modular lattice and $X:K^+(L)\to \RR_{>0}$ an additive map.
Then there exists a unique filtration (=chain)
\begin{equation*}
0=a_0<a_1<\ldots<a_n=1
\end{equation*}
with intervals $[a_{k-1},a_k]\neq 0$ labeled by real numbers $\lambda_1<\ldots<\lambda_n$ such that the following conditions are satisfied.
\begin{enumerate}
\item
The interval $[a_{k-1},a_l]$ is complemented for $1\leq k\leq l\leq n$ with $\lambda_l-\lambda_k<1$.
\item 
The balancing condition
\begin{equation*}
\sum_{k=1}^n\lambda_k X([a_{k-1},a_k])=0
\end{equation*}
holds.
\item
For any collection of elements $b_k\in [a_{k-1},a_k]$, $k=1,\ldots,n$, such that $[b_k,b_l]$ is complemented for $1\leq k< l\leq n$ with $\lambda_l-\lambda_k\leq 1$, the inequality
\begin{equation*}
\sum_{k=1}^n\lambda_kX([a_{k-1},b_k])\leq 0
\end{equation*}
holds.
\end{enumerate}
The uniquely defined filtration, depending on $X$, is called the \textit{weight filtration} on $L$.
\end{thmdef2}

Given $L$ with its weight filtration $a_i,\lambda_i$, $i=1,\ldots,n$ we may construct a new finite length modular lattice $L'$ which appeared implicitly in the theorem/definition above.
Namely an element $b\in L'$ is given by elements $b_k\in [a_{k-1},a_k]$, $k=1,\ldots,n$, such that $[b_k,b_l]$ is complemented for $1\leq k< l\leq n$ with $\lambda_l-\lambda_k\leq 1$ and $\sum_{k=1}^n\lambda_kX([b_{k-1},b_k])=0$.
Moreover there is a map $X':K^+(L')\to\RR_{>0}$ given by
\[
X'([b,c]):=\sum_{k=1}^nX([b_k,c_k]).
\]
Applying the theorem/definition above to $(L',X')$ we get a filtration of $L'$ which in particular gives a filtration on each interval $[a_{k-1},a_k]$ by definition of $L'$, thus a refinement of the weight filtration on $L$ indexed by $\RR^2$ with the lexicographical order.
This inductive process continues building a sequence $(L,X),(L',X'),(L'',X''),\ldots$ until we reach a lattice which is complemented, thus has trivial weight filtration.
We call the refinement of the weight filtration constructed in this way the \textit{iterated weight filtration}, and it is indexed by $\RR^n\subset\RR^\infty$ for some $n$ with the lexicographical order.

At this point, the reader may want to skip ahead to Section~\ref{sec_example} for a discussion of the simplest example where this refinement occurs.
In examples, the refinement appears not for generic choice of $X$, but along ``walls'' in $\mathrm{Hom}(K^+(L),\RR_{>0})$ described by real algebraic varieties defined over $\mathbb Z$.
The general properties of these walls warrant further study.

We note that for any stability condition on a triangulated category in the sense of Bridgeland~\cite{bridgeland07} the full subcategory of semistable objects of a fixed phase is finite-length abelian and the restriction of the central charge determines a suitable map $X$.
Thus, our theory gives a canonical refinement of the Harder--Narasimhan filtration of any object.
One application of this refinement is perhaps to define stratifications of the stack of semistable objects by type of the weight-filtration.
Refinements of this sort were defined and studied by Kirwan~\cite{kirwan05}, in particular for vector bundles on a curve.
We do not consider stratifications in the present paper, but hope to return to this problem in the future.

\subsection{Asymptotics of the gradient flow}

For the dynamical interpretation of the iterated weight filtration we identify its indexing set, $\RR^\infty$, with the space of functions
\begin{equation*}
\RR\log t\oplus\RR\log\log t\oplus\RR\log\log\log t\oplus\ldots
\end{equation*}
defined for $t\gg 0$, as totally ordered sets.

As above, let $Q$ be a quiver, $E$ a representation of $Q$ given by vector spaces $E_i$ for each vertex $i$ and linear maps $\phi_\alpha$ for each arrow $\alpha$, and $h_i$ a Hermitian metric on $E_i$, which will be allowed to vary.
Fix a positive real number $m_i$ for each vertex $i$ of $Q$.
These determine an additive map $X:K_0(\mathrm{Rep}(Q))\to\RR$ via $X(E):=\sum_im_i\dim E_i$, as well as a Riemannian metric on the space of metrics on any representation of $Q$, see \eqref{riem_metric}.
Our general theory defines a unique iterated weight filtration $F_{\lambda}$ of $E$ labeled by $\lambda\in\RR^\infty$.
Our second main result is the following, which is Theorem~\ref{thm_asymptotics} in the main text.

\begin{theorem2}
Let $E=(E_i,\phi_\alpha)$ be a representation of a quiver $Q$ over $\CC$, then the limiting filtration of the flow \eqref{ode_intro} on metrizations of $E$ coincides with the iterated weight filtration on $E$ as an object in the category of representations of $Q$ over $\CC$ with $X$ determined by the $m_i$. 
Moreover on the piece $F_\lambda$ of the filtration, $(\lambda_1,\ldots,\lambda_n)\in\RR^n\subset\RR^{\infty}$, any trajectory $h$ of the flow satisfies
\begin{equation}
\log|h(t)|=\lambda_1\log t+\lambda_2\log\log t+\cdots+\lambda_n\log^{(n)}t+O(1)
\end{equation} 
where $\log^{(k)}$ is the $k$-times iterated logarithm.
\end{theorem2}

The proof involves an inductive procedure which produces explicit solutions of \eqref{ode_intro} up to terms in $L^1$.
A crucial property of the flow is monotonicity: If $g,h$ are solutions with $g(0)\leq h(0)$, then $g(t)\leq h(t)$ for all $t\geq 0$.

\subsection{Outline}

The text is organized as follows. 
In Section~\ref{sec_example} we discuss in detail an example which exhibits many of the general features.
This should give the reader a good idea of the practical content of our theory before diving deeper into it.
In section~\ref{sec_weight_dag} we look at the special case when all $E_i$, $i\in Q_0$, are one-dimensional, where the weight filtration can be defined much more easily as a solution to a convex optimization problem.
Section~\ref{sec_weight_lattice} concerns the purely lattice theoretic part or the work. 
After reviewing some basics, the main goal is proving existence and uniqueness of the weight filtration in any finite-length modular lattice.  
In Section~\ref{sec_flow} we construct asymptotic solutions to \eqref{ode_intro} and prove our second main result. For this, the language of $*$-algebras and $*$-bimodules provides a useful tool.

\textbf{Acknowledgments:} We thank S.~Donaldson and C.~Simpson for useful discussions.
We also thank anonymous referees for carefully proof-reading the text and providing suggestions to help improve the exposition.
The authors were supported by a Simons research grant, NSF DMS 150908, ERC Gemis, DMS-1265230, DMS-1201475 OISE-1242272 PASI, Simons collaborative Grant - HMS, HSE Megagrant, Laboratory of Mirror Symmetry NRU HSE, RF government grant, ag. 14.641.31.000, Simons Principle  Investigator Grant, CKGA VIHREN grant \foreignlanguage{russian}{КП-06-ПВ/16}.
Much of the research was conducted while the authors enjoyed the hospitality of the IHES, the IMSA Miami, and the Laboratory of Mirror Symmetry HSE Moscow


\section{Zig-zag example}
\label{sec_example}

In this section we discuss in detail the simplest example which exhibits many of the general features: refinement of the weight filtration, wall-crossing, and iterated logarithms.
This is the four-dimensional representation of the quiver
\begin{equation}
\label{zigzag_quiver}
\begin{tikzcd}
\bullet \arrow{dr} & & \bullet \arrow{dl}\arrow{dr} & \\
& \bullet & & \bullet
\end{tikzcd}
\end{equation}
which assigns $\CC$ to each vertex and the identity map to each arrow.
We hope this section will be aid the reader in following the more general discussion in subsequent sections.
In particular it would be useful to read the second subsection below before attempting Subsection~\ref{sec_ansatz}.

\subsection{Weight filtration}

The lattice $L$ of subrepresentations of the representation of the zig--zag quiver above is the set of order ideals in the partially order set $P:=\{1>2<3>4\}$, i.e. $P$ has four elements and Hasse diagram which looks like \eqref{zigzag_quiver} and $L$ is the set of subsets $I\subset P$ with the property that if $a\in I$, $b\in P$, $b\leq a$, then $b\in I$, and has Hasse diagram
\[
\begin{tikzcd}
                 &                                & P \arrow[-]{dl}\arrow[-]{dr}  & \\
                 & 124 \arrow[-]{dl}\arrow[-]{dr} &                               & 234 \arrow[-]{dl} \\
12 \arrow[-]{dr} &                                & 24 \arrow[-]{dl}\arrow[-]{dr} & \\
                 & 2 \arrow[-]{dr}                &                               & 4 \arrow[-]{dl} \\
                 &                                & \emptyset                     & 
\end{tikzcd}
\]
where we use the notation $12:=\{1,2\}$ and so on.
We have $K^+(L)=\ZZ_{>0}^P$ where e.g. $\ol{[4,124]}=(1,1,0,0)$, so $X\in\RR_{>0}^4$ under this identification.

\begin{itemize}
\item
\underline{$X_1X_4<X_2X_3$}: We claim that the weight filtration is $\emptyset<24<P$ with labels $\lambda_1=\lambda$ and $\lambda_2=\lambda+1$ where
\[
\lambda=-\frac{X_1+X_3}{X_1+X_2+X_3+X_4}\in (-1,0)
\]
as follows from the balancing condition. 
This is verified by going through the 16 possibilities for $b_1\in[\emptyset,24],b_2\in[24,P]$.
The condition $X_1X_4\leq X_2X_3$ is needed only in the case $b_1=2$, $b_2=124$.
The strict inequality $X_1X_4<X_2X_3$ ensures that $L'\cong\{0,1\}$, so there is no refinement and the iterated weight filtration is equal to the weight filtration.

\item
\underline{$X_1X_4=X_2X_3$}: The weight filtration is $\emptyset<24<P$ as in the previous case, however
\[
L'=\{(\emptyset,24)<(2,124)<(24,P)\}
\]
is not complemented and has weight filtration $(\emptyset,24)<(2,124)<(24,P)$ with labels $\mu$, $\mu+1$ where
\[
\mu=-\frac{X_3+X_4}{X_1+X_2+X_3+X_4}.
\]
There is no further refinement and the iterated weight filtration is thus $\emptyset<2<24<124<P$ with labels $(\lambda,\mu)<(\lambda,\mu+1)<(\lambda+1,\mu)<(\lambda+1,\mu+1)$.

\item
\underline{$X_1X_4>X_2X_3$}: The weight filtration is $\emptyset<2<24<124<P$ with labels
\[
\lambda=-\frac{X_1}{X_1+X_2}<\mu=-\frac{X_3}{X_3+X_4}<\lambda+1<\mu+1
\]
where $\lambda<\mu$ is equivalent to $X_1X_4>X_2X_3$. 
The lattice $L'$ has four elements and is isomorphic to the lattice of subsets of a two-element set, in particular complemented, so there is again no refinement of the weight filtration.
\end{itemize}

To summarize the situation we have a ``wall'' $X_1X_4=X_2X_3$ dividing the space $\RR_{>0}^4$ of parameters into two chambers.
The filtration is, up to relabeling, the same across a given open chamber. 
The refinement occurs only along the wall.

\subsection{Asymptotics}

Let us look at the gradient flow \eqref{ode_intro} in our 4-dimensional example.
For convenience, write the ODE in terms of variable $x_i=\log h_i\in\RR$, then we get
\begin{equation}\label{zigzag_ode1}
\begin{aligned}
m_1\dot{x}_1&=e^{x_2-x_1},\qquad m_2\dot{x}_2=-e^{x_2-x_1}-e^{x_2-x_3}, \\
m_3\dot{x}_3&=e^{x_2-x_3}+e^{x_4-x_3},\qquad m_4\dot{x}_4=-e^{x_4-x_3}.
\end{aligned}
\end{equation}
We try the ansatz
\[
x_i=a_i\log t+\log b_i
\]
which we want to solve the above equations up to error terms in $L^1(\RR_{\gg 0})$.
This is indeed possible as long as $m_1m_4\neq m_3m_2$ and discussed in detail in Section~\ref{sec_weight_dag} below.
The numbers $a_i$ come from the labels of the weight filtration for $X_i=m_i$ by our general theory.
Here we consider instead the more interesting case where $m_1m_4=m_3m_2$. 
For concreteness we take $m_1=m_2=m_3=m_4=1$.

Start by refining the first ansatz as follows:
\begin{equation}\label{zigzag_ansatz2}
\begin{aligned}
x_1&=\frac{1}{2}\log t+y_1(\log t),\qquad x_2=-\frac{1}{2}\log t+y_2(\log t), \\
x_3&=\frac{1}{2}\log t+y_3(\log t),\qquad x_4=-\frac{1}{2}\log t+y_4(\log t)
\end{aligned}
\end{equation}
This gives a system of ODEs in new dependent variables $y_i$ and independent variable $s=\log t$,
\begin{equation} \label{zigzag_ode1_5}
\begin{aligned}
\dot{y}_1&=e^{y_2-y_1}-\frac{1}{2},\qquad \dot{y}_2=-e^{y_2-y_1}-e^{y_2-y_3}+\frac{1}{2}, \\
\dot{y}_3&=e^{y_2-y_3}+e^{y_4-y_3}-\frac{1}{2},\qquad \dot{y}_4=-e^{y_4-y_3}+\frac{1}{2}.
\end{aligned}
\end{equation}
Note that $y_1$ and $y_4$ are fixed for 
\begin{equation} \label{zigzag_fp}
y_2-y_1=y_4-y_3=-\log 2.
\end{equation}
Assuming \eqref{zigzag_fp}, what remains is the system
\begin{equation} \label{zigzag_ode2}
\dot{y}_2=-e^{y_2-y_3},\qquad \dot{y}_3=e^{y_2-y_3}
\end{equation}
which has the same general form as the original one, \eqref{zigzag_ode1}.
This \textit{self-reproducing} feature of this class of equations is completely parallel to the passage from $L$ to $L'$ in the construction of the iterated weight filtration on a finite length modular lattice.

Returning to \eqref{zigzag_ode2} we easily find the explicit solution
\begin{equation} \label{zigzag_ode2_sol}
y_2=-\frac{1}{2}\log (2s),\qquad y_3=\frac{1}{2}\log (2s).
\end{equation} 
Of course \eqref{zigzag_fp} assumed that $y_1$ and $y_4$ are fixed, which contradicts \eqref{zigzag_ode2_sol}, so we do not get a solution of the original system \eqref{zigzag_ode1_5}.
However, it turns out that \eqref{zigzag_fp} and \eqref{zigzag_ode2_sol} still give the correct asymptotic behavior up to bounded terms. 
This follows from the general theory developed in the subsequent sections.
The proof involves construction a solution of \eqref{zigzag_ode1_5} up to error terms in $L^1(\RR_{\gg 0})$.
This is achieved by combining \eqref{zigzag_fp}, \eqref{zigzag_ode2_sol}, and adding terms of the form $C/s$.
Explicitly, the solution of \eqref{zigzag_ode1_5} up to terms in $L^1$ is
\begin{gather*}
y_1=-\frac{1}{2}\log\frac{s}{2}+\frac{1}{s},\qquad y_2=-\frac{1}{2}\log(2s), \\
y_3=\frac{1}{2}\log(2s),\qquad y_4=\frac{1}{2}\log\frac{s}{2}-\frac{1}{s}
\end{gather*}
which may be substituted into \eqref{zigzag_ansatz2} to give a solution to the original system \eqref{zigzag_ode1} up to terms in $L^1$.

It turns out that the strategy above, with some modifications, provides solution of the gradient flow up to terms in $L^1$ for any quiver representation, see Subsection~\ref{sec_ansatz}.
Another key ingredient, monotonicity, will be discussed there.


\section{Weights on directed acyclic graphs}
\label{sec_weight_dag}

For a special class of modular lattices constructed from directed acyclic graphs by taking the set of closed subgraphs, the weight filtration has a simpler definition avoiding the language of lattice theory.
This corresponds to the case of representations of acyclic quivers which assign a one-dimensional space to each vertex.
The discussion in this section is essentially subsumed by the later ones, and the reader is free to skip it, but we hope this section will help motivate the general theory and make it more accessible.

\subsection{Weight grading}

A \textbf{directed acyclic graph} (DAG) is an oriented graph without multiple edges or oriented cycles. 
(For us, the terms oriented graph and quiver are synonymous.)
If $G$ is a DAG, we write $G_0$ for the set of vertices and $G_1\subset G_0\times G_0$ for the set of edges/arrows.
Also write $\alpha:i\to j$ to indicate that $\alpha$ is an edge from a $i$ to $j$ where $i,j\in G_0$.
We assume throughout that the graph is finite.

An \textbf{$\RR$-grading} on a DAG, $G$, is a choice of number $v_i\in\RR$ for every vertex $i\in G_0$ which decreases at least by one on each edge, i.e. 
\begin{equation}
v_i-v_j\geq 1
\end{equation}
if there is an edge $\alpha:i\to j$. 
$\RR$-gradings form a closed convex subset in $\RR^{G_0}$.

There is a canonical ``energy minimizing'' $\RR$-grading depending only on (arbitrary) masses $m_i>0$, $i\in G_0$.
More precisely, we define the \textbf{weight grading} on $G$ for given choice of the $m_i$ to be the $\RR$-grading $v$ which minimizes 
\begin{equation}\label{grading_energy}
\sum_{i\in G_0}m_iv_i^2.
\end{equation}
Since we are minimizing essentially the length squared on a closed convex subset, existence and uniqueness of a minimizer follow for very general reasons.
The method of Lagrange multipliers (Karush--Kuhn--Tucker conditions) gives the following equivalent definition of the weight grading.

\begin{lem} \label{weight_grading_lagr}
Let $G$ be a DAG and $m_i\in\RR_{>0}$ for $i\in G_0$ arbitrary, then an $\RR$-grading, $v$, is the weight grading if it satisfies the following condition:
There are numbers $u_\alpha\geq 0$, $\alpha\in G_1$, such that $u_\alpha=0$ for any edge $\alpha:i\to j$ with $v_i-v_j>1$ and
\begin{equation}
m_iv_i=\sum_{i\xrightarrow[\alpha]{}j}u_\alpha-\sum_{k\xrightarrow[\alpha]{}i}u_\alpha
\end{equation}
for $i\in G_0$.
\end{lem}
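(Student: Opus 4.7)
The plan is to recognize the weight grading problem as a strictly convex quadratic program with affine inequality constraints, and to derive the claimed characterization directly from the associated KKT conditions. Before invoking KKT, I would confirm that the feasible set is nonempty: since $G$ is a finite DAG, it admits a topological ordering, so assigning to each vertex $i$ a sufficiently large real decreasing along every edge (for instance, the length of a longest directed path out of $i$) produces an $\RR$-grading. The objective $\tfrac12\sum_i m_i v_i^2$ is strictly convex and coercive, so a unique minimizer exists on the closed convex feasible polyhedron.

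Next, I would form the Lagrangian
\begin{equation*}
L(v,u) = \tfrac{1}{2}\sum_{i\in G_0} m_i v_i^2 \;+\; \sum_{\alpha\in G_1,\;\alpha:i\to j} u_\alpha\,(1 + v_j - v_i),
\end{equation*}
with multipliers $u_\alpha \geq 0$ for each inequality constraint $v_i - v_j \geq 1$. Computing $\partial L/\partial v_i = 0$ yields
\begin{equation*}
m_i v_i \;=\; \sum_{\alpha:\,i\to j} u_\alpha \;-\; \sum_{\alpha:\,k\to i} u_\alpha,
\end{equation*}
which is exactly the stationarity relation in the statement. Complementary slackness reads $u_\alpha(v_i - v_j - 1) = 0$, equivalent to the condition that $u_\alpha = 0$ whenever the edge constraint is strictly slack, i.e.\ whenever $v_i - v_j > 1$.

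The final step is to invoke the KKT theorem: for a convex objective subject to affine inequality constraints, the KKT conditions are both necessary and sufficient for global optimality, and no constraint qualification beyond feasibility is needed (affinity of the constraints suffices). Thus a grading $v$ is the weight grading if and only if multipliers $u_\alpha$ satisfying the listed conditions exist, which is the content of the lemma.

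I anticipate no serious obstacle; the lemma is essentially a translation of textbook convex-duality facts into the combinatorial language of the DAG. The only subtlety worth a line of commentary is the constraint qualification issue, which is disposed of by the observation that all constraints are affine.
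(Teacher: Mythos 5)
Your proposal is correct and follows exactly the route the paper indicates: the text immediately preceding the lemma states that it is obtained from ``the method of Lagrange multipliers (Karush--Kuhn--Tucker conditions),'' and your write-up simply spells out the routine KKT derivation (stationarity, complementary slackness, sufficiency via convexity and affine constraints) that the paper leaves implicit.
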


The Lagrange multipliers $u_\alpha$ are in general not unique unless $G$ is a tree.
As a simple consequence of the lemma we see that the weight grading $v$ satisfies the balancing condition
\begin{equation}\label{weight_balancing}
\sum_{i\in G_0}m_iv_i=0.
\end{equation}
Furthermore, suppose $E\subset G_0$ is a set of vertices with the following property: If $i\in E$ and $\alpha:i\to j$ with $v_i-v_j=1$ then $j\in E$. 
Then for such subsets
\begin{equation}
\sum_{i\in E}m_iv_i=-\sum_{\substack{\alpha:k\to i \\ k\notin E, i\in E }}u_\alpha\leq 0.
\end{equation}
It turns out these properties characterize the weight grading uniquely, providing a convenient way of checking that a certain $\RR$-grading is in fact the weight grading.

\begin{prop}
Let $G$ be a DAG with choice of $m_i>0$, then an $\RR$-grading $v$ is the weight grading if and only if 
\[
\sum_{i\in G_0}m_iv_i=0.
\]
and for every subset $E\subset G_0$ such that if $i\in E$ and $\alpha:i\to j$ with $v_i-v_j=1$ then $j\in E$, then
\[
\sum_{i\in E}m_iv_i\leq 0.
\]
\end{prop}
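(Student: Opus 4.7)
The plan is to exploit the fact that the energy functional $\sum_{i\in G_0} m_i v_i^2$ is strictly convex and the space of $\RR$-gradings is a closed convex subset of $\RR^{G_0}$, so the weight grading is the unique minimizer. It therefore suffices to show that any $\RR$-grading $v$ satisfying the balancing condition and the family of subset inequalities is itself a global minimizer. The forward implication (weight grading $\Rightarrow$ both conditions) has already been deduced from Lemma~\ref{weight_grading_lagr} in the discussion preceding the proposition, so the work is entirely in the converse.

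For the converse, let $v$ satisfy both hypotheses and let $v'$ be an arbitrary $\RR$-grading. Set $w := v' - v$. On every edge $\alpha:i\to j$ we have $v_i - v_j \geq 1$ and $v'_i - v'_j \geq 1$, so on the \emph{tight} edges (those with $v_i - v_j = 1$) one gets $w_i - w_j \geq 0$. Thus $w$ is weakly decreasing along tight edges. This has the crucial consequence that every sublevel set
\begin{equation}
F_s := \{\,i\in G_0 \,:\, w_i \leq s\,\}
\end{equation}
is admissible in the sense of the second hypothesis: $i\in F_s$ and $\alpha:i\to j$ tight imply $w_j \leq w_i \leq s$, hence $j\in F_s$.

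Enumerating the distinct values $s_1 < \ldots < s_m$ taken by $w$ and writing $F_k := F_{s_k}$, the ``layer-cake'' identity
\begin{equation}
w_i \;=\; s_m \;-\; \sum_{k=1}^{m-1}(s_{k+1}-s_k)\,\mathbf{1}_{F_k}(i)
\end{equation}
allows one to compute
\begin{equation}
\sum_{i\in G_0} m_i v_i w_i \;=\; s_m\!\sum_{i\in G_0} m_i v_i \;-\; \sum_{k=1}^{m-1}(s_{k+1}-s_k)\!\sum_{i\in F_k} m_i v_i.
\end{equation}
The first term vanishes by the balancing condition, and every summand of the second is a product of a positive gap $s_{k+1}-s_k$ with the nonpositive quantity $\sum_{i\in F_k} m_i v_i$ (by the subset inequality applied to $F_k$), so $\sum_i m_i v_i w_i \geq 0$. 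Expanding
\begin{equation}
\sum_{i\in G_0} m_i (v'_i)^2 \;=\; \sum_{i\in G_0} m_i v_i^2 \;+\; 2\sum_{i\in G_0} m_i v_i w_i \;+\; \sum_{i\in G_0} m_i w_i^2
\end{equation}
then shows that the energy of $v'$ is at least that of $v$, with equality only if $w\equiv 0$. Thus $v$ is the unique energy minimizer, i.e.\ the weight grading.

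The main conceptual step, rather than a real obstacle, is translating between the Lagrangian-style characterization of Lemma~\ref{weight_grading_lagr} (a local statement involving dual multipliers $u_\alpha$) and the purely combinatorial hypotheses of the proposition. The layer-cake decomposition along sublevel sets of the difference $v'-v$ is the device that packages the whole family of subset inequalities into the single inner-product estimate $\sum_i m_i v_i (v'_i - v_i) \geq 0$, which is exactly what convexity requires to promote a stationarity-type condition to global optimality.
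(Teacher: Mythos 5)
Your proof is correct and follows essentially the same route as the paper's: both verify that the first variation of the energy in the direction $w=v'-v$ is nonnegative, using the admissibility (under tight edges) of the relevant subsets together with the balancing condition and the subset inequalities. The paper phrases this as the observation that the tangent cone to the space of $\RR$-gradings at $v$ is generated by $1_{G_0}$ and $-1_E$ for admissible $E$ and checks nonnegativity on generators, while your layer-cake identity for $w$ along its sublevel sets makes that cone decomposition fully explicit (and also delivers uniqueness directly from strict convexity, which the paper leaves implicit).
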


\begin{proof}
One implication is clear from the discussion above.
Suppose then that $v$ satisfies the two conditions stated in the theorem. 
To show that $v$ is the weight grading it suffices to verify for $\delta$ in the tangent cone at $v$ to the space of $\RR$-gradings, $C$, that
\begin{equation}
\sum_{i\in G_0}m_iv_i\delta_i\geq 0
\end{equation}
i.e. the variation of \eqref{grading_energy} in the direction $\delta$ is non-negative.
Note that $C$ consists of $\delta\in\RR^{G_0}$ such that if there is an arrow $\alpha:i\to j$ and $v_i-v_j=1$ then $\delta_i\geq \delta_j$.
It follows that $C$ is generated by vectors $1_{G_0}$ and $-1_E$ where $E$ ranges over subsets of $G_0$ such that if $i\in E$ and $\alpha:i\to j$ with $v_i-v_j=1$ then $j\in E$.
By the first assumption on $v$, the balancing condition, the variation vanishes in the direction $1_{G_0}$, and by the second assumption it is non-negative in the directions $-1_E$.
This shows that $v$ is a minimum of \eqref{grading_energy}.
\end{proof}

\begin{ex}
As a basic example, consider the following DAG with $n\geq 1$ vertices:
\[
\underset{m_1}{\bullet}\longrightarrow\underset{m_2}{\bullet}\longrightarrow\cdots\longrightarrow\underset{m_n}{\bullet}
\]
The weight grading is given by $(\lambda,\lambda-1,\ldots,\lambda-n+1)$ where the highest weight $\lambda$ is determined by \eqref{weight_balancing} to be
\begin{equation}
\lambda=\frac{m_2+2m_3+\ldots+(n-1)m_n}{m_1+m_2+\ldots+m_n}
\end{equation}
Note that if $m_i=1$ for all $i$ then the weights are integers or half-integers.
For this particular graph, the only effect of changing the parameters $m_i$ is to shift the overall grading. 
We will see below that in general more interesting changes can occur along codimension one walls.
\end{ex}

\begin{remark}
One can also consider graphs with infinitely many vertices and parameters $m_i$ decaying sufficiently fast so that $\sum m_iv_i^2<\infty$ for some $\RR$-grading $v_i\in\RR$. 
Elementary Hilbert space theory then implies existence and uniqueness of an $\RR$-grading which minimizes total energy $\sum m_iv_i^2$.
\end{remark}

\subsection{Gradient flow}

The weight grading on a DAG has a dynamical interpretation, describing the asymptotics of a certain gradient flow.
Let $G$, $m_i$ be as before and fix also constants $c_\alpha>0$, $\alpha\in G_1$.
Consider the function
\begin{equation}
S:\RR^{G_0}\to\RR,\qquad S(x)=\sum_{\alpha:i\to j}c_\alpha e^{x_j-x_i}.
\end{equation}
The negative gradient flow of $S$ with respect to the flat metric
\begin{equation}
\sum_{i\in G_0}m_i(dx_i)^2
\end{equation}
is given by
\begin{equation}\label{dag_flow_vert}
m_i\dot{x}_i=\sum_{i\xrightarrow[\alpha]{}j}c_\alpha e^{x_j-x_i}-\sum_{k\xrightarrow[\alpha]{}i}c_\alpha e^{x_i-x_k}
\end{equation}

We can also write the flow in terms of variables attached to the edges instead of the vertices.
Set 
\begin{equation}
y_\alpha=-(x_j-x_i+\log c_\alpha)
\end{equation}
for each arrow $\alpha:i\to j$, then 
\begin{equation}\label{dag_flow_edge}
\dot{y}_\alpha=\frac{1}{m_i}\left(\sum_{\xleftarrow[\alpha]{}i\xrightarrow[\beta]{}}e^{-y_\beta}-\sum_{\xleftarrow[\alpha]{}i\xleftarrow[\beta]{}}e^{-y_\beta}\right)-\frac{1}{m_j}\left(\sum_{\xrightarrow[\alpha]{}j\xrightarrow[\beta]{}}e^{-y_\beta}-\sum_{\xrightarrow[\alpha]{}j\xleftarrow[\beta]{}}e^{-y_\beta}\right).
\end{equation}
The right hand side of the system of equations can be interpreted as $\Delta e^{-y}$, where $\Delta$ is a graph Laplacian.
In terms of variables $p_\alpha=e^{-y_\alpha}$ the system of equations becomes a special case of the higher--dimensional Lotka--Volterra equations which have the general form
\begin{equation}
\dot{p}_\alpha=p_\alpha\left(\sum_\beta a_{\alpha\beta}p_\beta+b_\alpha\right).
\end{equation}
This system provides a basic model for population dynamics, see for example Hofbauer--Sigmund~\cite{hofsig}. 
The asymptotic behavior in the general case can be significantly more complicated than in our case --- one need not have convergence to a stable equilibrium.

\begin{ex}
Consider the simplest non-trivial DAG:
\[
\underset{m_1}{\bullet}\longrightarrow\underset{m_2}{\bullet}
\]
The system of ODEs \eqref{dag_flow_vert} is
\begin{equation}
m_1\dot{x}_1=ce^{x_2-x_1},\qquad m_2\dot{x}_2=-ce^{x_2-x_1}
\end{equation}
with explicit solution
\begin{equation}
x_1=\frac{m_2}{m_1+m_2}\log t+\log C_1,\qquad x_2=-\frac{m_1}{m_1+m_2}\log t+\log C_2
\end{equation}
where $C_1,C_2>0$ are chosen so that
\begin{equation}
\frac{C_2}{C_1}=\frac{m_1m_2}{c(m_1+m_2)}.
\end{equation}
Note that the coefficients of $\log t$ in the solution coincide with the weight grading.
\end{ex}

In general, the ODE \eqref{dag_flow_vert} does not have an explicit solution, however it turns out that we can always find an explicit asymptotic solution which solves the equation up to terms in $L^1$. 
Such a solution will differ from an actual solution by a bounded error term, thus have the same asymptotics up to $O(1)$.

We begin with the following ansatz for the solution $x_i(t)$.
\begin{equation}
x_i=v_i\log t+b_i
\end{equation}
with $v_i,b_i\in\RR$.
Plugging this into \eqref{dag_flow_vert} gives
\begin{equation}
\frac{m_iv_i}{t}=\sum_{i\xrightarrow[\alpha]{}j}c_\alpha t^{v_j-v_i}e^{b_j-b_i}-\sum_{k\xrightarrow[\alpha]{}i}c_\alpha t^{v_i-v_k}e^{b_i-b_k}.
\end{equation}
For this equation to be true up to terms in $L^1$, it is necessary that only $t^{\leq -1}$ appear on the right hand side, i.e. $v_j-v_i\leq -1$ whenever there is an edge $\alpha:i\to j$.
Then, comparing coefficients of $t^{-1}$ (other terms are in $L^1$) we need to solve
\begin{equation}\label{ansatz1_eq}
m_iv_i=\sum_{\substack{\alpha:i\to j \\ v_i-v_j=1}}c_\alpha e^{b_j-b_i}-\sum_{\substack{\alpha:k\to i \\ v_k-v_i=1}}c_\alpha e^{b_i-b_k}.
\end{equation}
Comparing this with Lemma~\ref{weight_grading_lagr}, we see that the $v_i$ are necessarily the weight grading on $G$.
Furthermore, if \eqref{ansatz1_eq} has a solution, $b$, then we can evidently choose Lagrange multipliers $u_\alpha$ such that $u_\alpha>0$ whenever $\alpha:i\to j$ is an edge with $v_i-v_j=1$, and $u_\alpha=0$ otherwise. 
It turns out that this is not always possible.
We will see below that in the case where we cannot solve \eqref{ansatz1_eq} it is necessary to refine the original ansatz with terms involving iterated logarithms.

Note that \eqref{ansatz1_eq} is the equation for a critical point of the function
\begin{equation}
\widetilde{S}(x)=\sum_{\substack{\alpha:i\to j \\ v_i-v_j=1}}c_\alpha e^{x_j-x_i}+\sum_{i\in G_0}m_iv_ix_i
\end{equation}
Suppose that we can find $u_\alpha>0$ such that
\begin{equation}\label{lag_mult_active}
m_iv_i=\sum_{\substack{\alpha:i\to j \\ v_i-v_j=1}}u_\alpha-\sum_{\substack{\alpha:k\to i \\ v_k-v_i=1}}u_\alpha
\end{equation}
then
\begin{equation}
\widetilde{S}(x)=\sum_{\substack{\alpha:i\to j \\ v_i-v_j=1}}\left(c_\alpha e^{x_j-x_i}-u_\alpha(x_j-x_i)\right)
\end{equation}
hence $\widetilde{S}$ is the composition of a linear map (the differential $d:\RR^{G_0}\to\RR^{G_1}$) with a proper strictly convex function, thus its critical locus is an affine subspace of $\RR^{G_0}$.
We summarize the result so far in the following theorem.

\begin{thm}\label{thm_ansatz1}
Let $G$ be a DAG, $m_i>0$, and $v\in \RR^{G_0}$ the weight grading on $G$.
Suppose that we can find $u_\alpha>0$ for each edge $\alpha:i\to j$ with $v_i-v_j=1$ such that \eqref{lag_mult_active} holds.
Then the flow \eqref{dag_flow_vert} admits an asymptotic solution up to terms in $L^1$ of the form
\[
x_i=v_i\log t+b_i
\]
and thus for an actual solution $x(t)$ we have
\[
x_i=v_i\log t+O(1).
\]
\end{thm}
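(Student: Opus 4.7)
My plan is in two steps: (i) verify the existence of a $b$ solving \eqref{ansatz1_eq}, so that $\tilde x_i(t) := v_i \log t + b_i$ is an asymptotic solution of \eqref{dag_flow_vert} with an $L^1$ residual; and (ii) promote this to the claimed $O(1)$ estimate for an actual trajectory by exploiting the monotonicity of the flow.

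For (i), the discussion immediately preceding the theorem already exhibits $\widetilde S$ as a composition $F \circ d$, where $d\colon \RR^{G_0} \to \RR^{G_1}$ is the coboundary restricted to active edges $\{\alpha\colon i\to j,\ v_i-v_j=1\}$ and $F$ is a sum of scalar functions $y \mapsto c_\alpha e^y - u_\alpha y$. Each such function is strictly convex and coercive because $c_\alpha > 0$ and $u_\alpha > 0$, so $F$ is strictly convex and proper. The image $\im(d)$ is a closed linear subspace, and $F|_{\im(d)}$ is still proper, so it attains its unique minimum at some point $d(b)$; the orthogonality $\nabla F(d(b)) \perp \im(d)$ translates to $\nabla \widetilde S(b) = 0$, i.e.\ $b$ solves \eqref{ansatz1_eq}. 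Substituting $\tilde x_i = v_i \log t + b_i$ into \eqref{dag_flow_vert} and collecting the contributions from inactive edges with $v_i - v_j > 1$ then yields a residual $r_i(t)$ of order $t^{-1-\delta}$ for some $\delta > 0$, which is integrable on $[t_0,\infty)$ for any $t_0 \ge 1$.

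For (ii), set $M(t) := \max_j |r_j(t)|/m_j \in L^1$. Because the right-hand side of \eqref{dag_flow_vert} is invariant under adding the same constant to every coordinate, the functions
\[
\psi^\pm_i(t) := \tilde x_i(t) \pm K \pm \int_{t_0}^t M(s)\, ds
\]
are, respectively, a super- and subsolution of \eqref{dag_flow_vert}: a direct computation gives $m_i \dot\psi^+_i - \mathrm{RHS}_i(\psi^+) = r_i(t) + m_i M(t) \ge 0$ and symmetrically $\le 0$ for $\psi^-$. The off-diagonal partial derivatives of the right-hand side of \eqref{dag_flow_vert} are non-negative, so the flow is cooperative and admits the standard comparison principle. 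Choose $K$ large enough that $\psi^-(t_0) \le x(t_0) \le \psi^+(t_0)$ coordinatewise; then $\psi^-(t) \le x(t) \le \psi^+(t)$ for all $t \ge t_0$. Since $\int_{t_0}^\infty M(s)\, ds < \infty$, both $\psi^\pm$ differ from $\tilde x$ by a uniformly bounded amount, giving $x_i(t) = v_i \log t + O(1)$.

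The main obstacle is step (ii): turning $L^1$ control of the residual into $L^\infty$ control of the error. The key device is the explicit super- and subsolution construction above, which absorbs the $L^1$ residual into a bounded additive shift that is globally well-defined thanks to the translation invariance of the equation. Step (i) is by contrast a convex-analytic repackaging of Lemma~\ref{weight_grading_lagr}: positivity of the Lagrange multipliers $u_\alpha$ is precisely what provides coercivity of $\widetilde S$ along $\im(d)$ and hence existence of a global minimizer.
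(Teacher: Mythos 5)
Your proof is correct, and it is essentially the approach the paper intends, with the difference that you carry out the details the paper leaves implicit or defers. For step (i), the paper sketches the same argument in the discussion preceding the theorem (noting $\widetilde S = F\circ d$ with $F$ proper strictly convex and concluding that the critical locus is an affine subspace); your version makes explicit the point the paper glosses over, namely that one must minimize $F$ \emph{restricted} to $\mathrm{im}(d)$ (not on all of $\RR^{G_1}$, where the minimizer might miss $\mathrm{im}(d)$) and then read off $\nabla\widetilde S(b)=0$ from the orthogonality condition. For step (ii), the paper explicitly remarks that ``the claim about asymptotics of actual solutions could be verified here directly without difficulty'' and instead proves a more general statement (Proposition~\ref{sol_up_to_l1}) in Subsection~\ref{sec_monotonicity} using monotonicity (Proposition~\ref{prop_mono}) and homogeneity (Proposition~\ref{prop_homogen}) in the $*$-algebra setting. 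Your super/subsolution construction $\psi^\pm = \tilde x \pm K \pm \int M$ is precisely the DAG-level instantiation of those two ingredients: translation invariance of the right-hand side plays the role of homogeneity, and the cooperative (Kamke) comparison principle plays the role of the monotonicity proposition. The one thing worth noting is that your additive trick relies on the state space being abelian ($\RR^{G_0}$); in the noncommutative quiver setting the analogous argument requires the multiplicative scalar factor $\exp(\tfrac12\int f)$ and a slight strengthening of monotonicity, which is why the paper develops the machinery of Subsection~\ref{sec_monotonicity} rather than proving this special case inline.
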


The claim about asymptotics of actual solutions could be verified here directly without difficulty, however we will show it for a more general setup in Subsection~\ref{sec_monotonicity}.
We conclude this subsection with an example where Theorem~\ref{thm_ansatz1} is not applicable. 
Consider the following DAG which is an orientation (zig-zag) of the $A_4$ Dynkin diagram. 
Masses $m_i$ indicate the labeling of the vertices.
\begin{equation} 
\begin{tikzpicture}[scale=.8,baseline=(current  bounding  box.center)]
\node (1) at (0,0) {$m_1$};
\node (2) at (1,-1) {$m_2$};
\node (3) at (2,0) {$m_3$};
\node (4) at (3,-1) {$m_4$};
\draw[->] (1) edge (2);
\draw[->] (3) edge (2);
\draw[->] (3) edge (4);
\end{tikzpicture}
\end{equation}
The weight grading depends on the choice of $m_1,m_2,m_3,m_4>0$.

\textbf{Case $\mathbf{m_1m_4>m_2m_3}$:}
In this region there are four distinct weights 
\begin{equation} 
\begin{tikzpicture}[scale=.8,baseline=(current  bounding  box.center)]
\node (1) at (0,0) {$\mu$};
\node (2) at (1,-1) {$\mu-1$};
\node (3) at (2,.5) {$\lambda$};
\node (4) at (3,-.5) {$\lambda-1$};
\draw[->] (1) edge (2);
\draw[->] (3) edge (2);
\draw[->] (3) edge (4);
\end{tikzpicture}
\end{equation}
where
\begin{equation}
\lambda=\frac{m_4}{m_3+m_4},\qquad \mu=\frac{m_2}{m_1+m_2}.
\end{equation}
The Lagrange multipliers which certify $v$ are
\begin{equation}
u_1=\frac{m_1m_2}{m_1+m_2},\qquad u_2=0,\qquad u_3=\frac{m_3m_4}{m_3+m_4}
\end{equation}
hence Theorem~\ref{thm_ansatz1} can be applied.

\textbf{Case $\mathbf{m_1m_4\leq m_2m_3}$:}
In this region there are two distinct weights $\lambda=v_1=v_3$ and $\lambda-1=v_2=v_4$ where
\begin{equation}
\lambda=\frac{m_2+m_4}{m_1+m_2+m_3+m_4}.
\end{equation}
The Lagrange multipliers which certify $v$ are
\begin{equation}
u_1=\frac{m_1(m_2+m_4)}{M},\qquad u_2=\frac{m_2m_3-m_1m_4}{M},\qquad u_3=\frac{(m_1+m_3)m_4}{M}
\end{equation}
where $M=m_1+m_2+m_3+m_4$.
Note that $u_2>0$ if and only if $m_1m_4<m_2m_3$, so if $m$ lies on the quadric $m_1m_4=m_2m_3$ then 
the condition of Theorem~\ref{thm_ansatz1} is not satisfied.

\subsection{From DAGs to lattices}

Given a directed acyclic graph $G$ consider the collection $L$ of subsets of $E\subset G_0$ which span \textit{closed subgraphs}, i.e. no arrows lead out of $E$.
Note that $L$ is closed under unions and intersections, thus a sublattice of the boolean lattice of all subsets of $G_0$.
We can almost recover $G$ from the partially ordered set $L$.
For example the DAGs
\begin{equation} 
\begin{tikzcd}
\bullet \arrow{r}\arrow[bend left]{rr} & \bullet \arrow{r} & \bullet & & \bullet \arrow{r} & \bullet \arrow{r} & \bullet
\end{tikzcd}
\end{equation}
have the same lattices of closed subgraphs.
However, this does not affect the weight grading.

The lattice of subrepresentations of a finite-dimensional representation is in general more complicated than the lattices constructed from graphs, in that complements, if they exist, need not be unique. 
However, such a lattice is still \textit{modular} which leads to a good theory of filtrations.
In the next section we will generalize the notion of weight filtration from DAGs to finite length modular lattices.


\section{Weight filtrations in modular lattices}
\label{sec_weight_lattice}

This section contains the proof of our first main result, the existence and uniqueness of weight filtrations in finite-length modular lattices.
The reader interested mainly in the case of quiver representations and willing to take this result on faith can skip this entire section on first reading.

In the first subsection we review some definitions and results from lattice theory.
In Subsection~\ref{sec_hn_chain} we define the Harder--Narasimhan filtration of a finite length modular lattice with polarization, as well as its mass, and prove a triangle inequality for mass.
Subsection~\ref{sec_pcrchains} introduces the concept of a \textit{paracomplemented $\RR$-filtration}, which is essential for the proof in Subsection~\ref{subsec_balanced}.
In the final subsection we discuss the iterated weight filtration and provide examples to show that it can have arbitrary depth.

\subsection{Some lattice theory basics}
\label{sec_lattices}

In this subsection we recall some basic notions from lattice theory, in particular modular lattices as introduced by Dedekind. 
We learned this material in part from G.~Birkhoff's classic textbook~\cite{birkhoff_book} and J.B.~Nation's online notes~\cite{jbnation}, which are excellent sources for more background.

A \textbf{lattice} is a partially ordered set, $L$, in which any two elements $a,b\in L$ have a least upper bound $a\vee b$ and greatest lower bound $a\wedge b$. 
When $L$ contains both a least element $0\in L$ and greatest element $1\in L$, then $L$ is called a \textbf{bound lattice}.
Given elements $a\leq b$ in $L$, the \textbf{interval} from $a$ to $b$ is the bound lattice
\begin{equation}\label{lattice_interval}
[a,b]:=\{x\in L \mid a\leq x\leq b\}.
\end{equation}
In a general lattice there are two ways of projecting an arbitrary element $x\in L$ to the interval $[a,b]$, given by the left and hand right side of the following inequality:
\begin{equation}\label{mod_inequality}
(x\wedge b)\vee a\leq (x\vee a)\wedge b.
\end{equation}
The defining property of a \textbf{modular lattice} is that the above inequality becomes an equality, hence
\begin{equation}\label{mod_law}
a\leq b\implies (x\wedge b)\vee a=(x\vee a)\wedge b\qquad\text{ for all }x\in L.
\end{equation}
The basic example of a modular lattice is the lattice of subobjects in a given object of an abelian category. 

There is an equivalence relation on the set of intervals in a modular lattice $L$ generated by
\begin{equation}
[a,a\vee b]\sim [a\wedge b,b].
\end{equation}
Modularity is equivalent to the condition that the maps
\begin{align}
[a,a\vee b]\to [a\wedge b,b],\qquad x\mapsto x\wedge b \\
[a\wedge b,b]\to [a,a\vee b],\qquad x\mapsto x\vee a
\end{align}
are inverse isomorphisms for all $a,b\in L$.
Thus, equivalent intervals are isomorphic lattices.
\begin{equation}
\begin{tikzpicture}[baseline=(current  bounding  box.center)]
\node (T) at (0,1) {$a\vee b$};
\node (L) at (-1,0) {$a$};
\node (R) at (1,0) {$b$};
\node (B) at (0,-1) {$a\wedge b$};
\draw
(T) edge (L)
(L) edge (B)
(T) edge (R)
(R) edge (B);
\node at (0,0) {\rotatebox{-45}{$\Longleftrightarrow$}};
\end{tikzpicture}
\end{equation}

A lattice $L$ is \textbf{finite length} if there is an upper bound on the length $n$ of any chain 
\begin{equation}
a_0<a_1<\ldots<a_n
\end{equation} 
of elements in $L$.
A finite length lattice is \textit{complete} in the sense that any (not necessarily finite) collection of elements has a least upper bound and greatest lower bound.
In particular, unless $L=\emptyset$, there are least and greatest elements $0$ and $1$ in any finite length lattice.
We say a lattice is \textbf{artinian} if it is modular and has finite length. 
In an artinian lattice, any two maximal chains have the same length, in fact:

\begin{thm}[Jordan--H\"older--Dedekind] \label{lattice_jh}
Suppose 
\[
0=a_0<a_1<\ldots<a_m=1,\qquad 0=b_0<b_1<\ldots<b_n=1
\]
are maximal chains in a modular lattice.
Then $m=n$ and there is a permutation $\sigma$ of the set $\{1,\ldots,n\}$ such that there are equivalences of intervals
\[
[a_{k-1},a_k]\sim [b_{\sigma(k)-1},b_{\sigma(k)}]
\]
for $k=1,\ldots,n$.
\end{thm}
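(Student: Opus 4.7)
The plan is to prove the theorem by induction on $n$, the length of the second maximal chain, with the diamond isomorphism (stated just before the theorem) doing the heavy lifting.

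For the base case $n=0$ we have $1=0$, so any chain is trivial. For the inductive step assume the theorem holds whenever one of the two chains has length strictly less than $n$. I would then split into two cases according to whether $a_{m-1}=b_{n-1}$ or not. In the easy case $a_{m-1}=b_{n-1}$, apply the inductive hypothesis inside the interval $[0,a_{m-1}]$, which carries the maximal chain $0<b_1<\ldots<b_{n-1}$ of length $n-1$; this gives $m-1=n-1$, produces the permutation of the first $n-1$ indices, and I extend it by setting $\sigma(m)=n$.

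The substantive case is $a_{m-1}\neq b_{n-1}$. Here both elements are covered by $1$, so $a_{m-1}\vee b_{n-1}=1$. Let $c=a_{m-1}\wedge b_{n-1}$ and apply the diamond isomorphism to obtain the equivalences
\begin{equation}
[c,a_{m-1}]\sim[b_{n-1},1],\qquad [c,b_{n-1}]\sim[a_{m-1},1].
\end{equation}
Since $[a_{m-1},1]$ and $[b_{n-1},1]$ each have length one, so do $[c,b_{n-1}]$ and $[c,a_{m-1}]$, i.e.\ $c$ is covered by both $a_{m-1}$ and $b_{n-1}$. Now I pick any maximal chain $0=c_0<c_1<\ldots<c_k=c$ in $[0,c]$ and consider the four maximal chains
\begin{align*}
A: & \quad 0<a_1<\ldots<a_{m-1}<1,\\
B: & \quad 0<c_1<\ldots<c_k<a_{m-1}<1,\\
C: & \quad 0<c_1<\ldots<c_k<b_{n-1}<1,\\
D: & \quad 0<b_1<\ldots<b_{n-1}<1.
\end{align*}
The chain $C$ restricted to $[0,b_{n-1}]$ has length $k+1$; the chain $D$ restricted there has length $n-1<n$; applying the inductive hypothesis in $[0,b_{n-1}]$ forces $k+1=n-1$, and hence (via the chain $B$ restricted to $[0,a_{m-1}]$ of length $n-1<n$) a second application of induction inside $[0,a_{m-1}]$ yields $m-1=n-1$, so $m=n$.

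For the permutation statement I chain together four equivalences of lists of subquotients: $A\sim B$ via induction in $[0,a_{m-1}]$ (both end with the one-step interval $[a_{m-1},1]$), $B\sim C$ via the two diamond equivalences above (they agree on the bottom $k$ subquotients, and the remaining pair $\{[c,a_{m-1}],[a_{m-1},1]\}$ is matched with $\{[c,b_{n-1}],[b_{n-1},1]\}$ after swapping), and $C\sim D$ via induction in $[0,b_{n-1}]$. Composing the three permutations gives the desired permutation matching subquotients of $A$ with those of $D$. The main obstacle I anticipate is bookkeeping: making sure in the hard case that the inductive hypothesis is applied only to intervals with a maximal chain of length strictly less than $n$ (which is guaranteed because $[0,b_{n-1}]$, and then $[0,a_{m-1}]$ once $m=n$ has been established, carries such a chain), and threading the four permutations together without sign or index errors.
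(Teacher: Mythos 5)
The paper states this theorem without proof (it is quoted as a classical result, with references to Birkhoff's book and Nation's notes), so I am evaluating your argument on its own terms. It is the standard inductive proof, and the main machinery — the case split on whether $a_{m-1}=b_{n-1}$, the two diamond isomorphisms $[c,a_{m-1}]\sim[b_{n-1},1]$ and $[c,b_{n-1}]\sim[a_{m-1},1]$, the two applications of the inductive hypothesis in $[0,b_{n-1}]$ and then $[0,a_{m-1}]$, and the composition of the three permutations $A\sim B\sim C\sim D$ — is assembled correctly, including the bookkeeping you were worried about: in each inductive application one of the two chains has length $n-1<n$, which is exactly what the hypothesis requires.

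The one step you pass over silently is the existence of a finite maximal chain $0=c_0<\ldots<c_k=c$ in $[0,c]$. In the artinian (finite-length) setting in which the paper actually invokes the theorem this is immediate. But the theorem as stated concerns a general modular lattice, and then the fact that a sublattice $[0,c]$ of a lattice admitting one finite maximal chain itself admits one is the Jordan--Dedekind chain condition — part of what the theorem is supposed to establish. Your inductive hypothesis only speaks about pairs of \emph{maximal} chains, so it does not directly bound arbitrary chains in $[0,c]$, and hence does not by itself give a maximal chain there. You can close the gap either by restricting attention to finite-length lattices (which suffices for everything in the paper), or by strengthening the inductive hypothesis to also assert that in any modular lattice possessing a maximal chain of length $<n$, every chain has length $<n$; carrying that extra clause through the induction requires the standard observation that the map $x\mapsto x\wedge b_{n-1}$ sends a chain in $L$ to a chain in $[0,b_{n-1}]$ whose length drops by at most one, since $b_{n-1}$ is covered by $1$.
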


The proof is essentially the same as for the classical Jordan--H\"older theorem, but translated into the setting of modular lattices.
See for example the texts mentioned at the beginning of this subsection.

Let $L$ be an artinian lattice, then we denote by $K(L)$ the abelian group with generators $\ol{[a,b]}$, $a\leq b$, and relations
\begin{align}
\ol{[a,b]}+\ol{[b,c]}=\ol{[a,c]} \\
\ol{[a,a\vee b]}=\ol{[a\wedge b,b]}
\end{align}
We let $K^+(L)\subset K(L)$ be the sub-semigroup generated by elements $\ol{[a,b]}$, $a<b$.
It is a direct consequence of Theorem~\ref{lattice_jh} that $K(L)$ (resp. $K^+(L)$) is the free abelian group (resp. semigroup) generated by the set of equivalence classes of intervals of length $1$ in $L$.

\subsection{Harder--Narasimhan filtration and mass}
\label{sec_hn_chain}

Harder--Narasimhan filtrations were originally defined for vector bundles on an algebraic curve.
The notion admits a straightforward generalization to modular lattices, which we include here for the sake of completeness and to fix terminology. 
We also prove a triangle inequality for the notion of \textit{mass} coming from the HN filtration.
Cornut~\cite{cornut17} has also recently studied Harder--Narasimhan filtrations in modular lattices by attaching building-like spaces to them.

Consider a sub-semigroup of $(\CC,+)$ of the form
\begin{equation}
H=\{re^{i\phi}\mid r>0,\phi \in I\}
\end{equation}
where $I\subset\RR$ is a half-open interval of length $\pi$, e.g. $I=[0,\pi)$.
A \textbf{polarization} on an artinian lattice $L$ is a homomorphism $Z:K(L)\to \CC$ such that $Z(K^+(L))\subset H$. 
The pair $(L,Z)$ is a \textit{polarized lattice}.
For each $a<b\in L$ we get a well-defined \textit{phase}
\begin{equation}
\phi([a,b]):=\mathrm{Arg}(Z(\ol{[a,b]}))\in I.
\end{equation}
A polarized lattice is \textbf{stable} (resp. \textbf{semistable}) if 
\begin{equation}
x\neq 0,1\implies \phi([0,x])<\phi(L)\qquad (\text{resp. }\phi([0,x])\leq\phi(L)).
\end{equation}

Note that since $Z(L)=Z([0,x])+Z([x,1])$ one has $\phi([0,x])<\phi(L)$ iff $\phi([x,1])>\phi(L)$, which gives an equivalent condition for stability.

\begin{thm}
Let $L$ be a polarized lattice, then there is a unique chain
\[
0=a_0<a_1<\ldots<a_n=1
\]
such that $[a_{k-1},a_k]$ is semistable for $k=1,\ldots,n$ and
\[
\phi([a_{k-1},a_k])>\phi([a_k,a_{k+1}]).
\]
\end{thm}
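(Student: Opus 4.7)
The plan is to follow the classical Harder--Narasimhan strategy: identify a canonical maximally destabilizing element $a_1$, verify that $[0, a_1]$ is semistable and that all phases strictly drop upon passing to the quotient interval $[a_1, 1]$, then close by induction on the length of $L$. Uniqueness will follow from an intrinsic characterization of $a_1$ against which any competing filtration can be compared.

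Let $\phi_{\max} := \sup\{\phi([0, x]) : 0 < x \leq 1\}$. This supremum is attained: a maximal chain in $L$ has bounded length and each simple subquotient belongs to one of finitely many equivalence classes, so $Z$ takes only finitely many values on $\{\ol{[0, x]} : 0 < x\}$. Set $S := \{x : \phi([0, x]) = \phi_{\max}\}$. The key lemma is that $S$ is closed under $\vee$. Given $a, b \in S$, the modular law provides $[a \wedge b, b] \sim [a, a \vee b]$, hence $Z(\ol{[a, a \vee b]}) = Z(\ol{[a \wedge b, b]})$. From $Z(\ol{[0, b]}) = Z(\ol{[0, a \wedge b]}) + Z(\ol{[a \wedge b, b]})$ combined with $\phi([0, a \wedge b]) \leq \phi_{\max} = \phi([0, b])$, elementary geometry in the half-plane $H$ (if both summands had phase $< \phi_{\max}$, so would the sum) forces $\phi([a \wedge b, b]) \geq \phi_{\max}$. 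Then $Z(\ol{[0, a \vee b]}) = Z(\ol{[0, a]}) + Z(\ol{[a \wedge b, b]})$ is a sum of vectors of phases $\phi_{\max}$ and $\geq \phi_{\max}$, so has phase $\geq \phi_{\max}$; combined with the definition of $\phi_{\max}$, this gives $a \vee b \in S$. Finite length now supplies a unique maximum $a_1 \in S$.

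By the definition of $\phi_{\max}$, every $y$ with $0 < y < a_1$ satisfies $\phi([0, y]) \leq \phi([0, a_1])$, so $[0, a_1]$ is semistable. If $c$ with $a_1 < c$ had $\phi([a_1, c]) \geq \phi_{\max}$, then the decomposition $Z(\ol{[0, c]}) = Z(\ol{[0, a_1]}) + Z(\ol{[a_1, c]})$ would force $\phi([0, c]) \geq \phi_{\max}$ and hence $c \in S$, contradicting the maximality of $a_1$. Therefore every phase in the polarized sublattice $[a_1, 1]$ is strictly less than $\phi_{\max}$, and the induction hypothesis applied to $[a_1, 1]$ (whose length is strictly smaller) produces a chain $a_1 < a_2 < \ldots < a_n = 1$ with semistable factors of strictly decreasing phases, completing the existence proof.

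For uniqueness, let $0 = b_0 < b_1 < \ldots < b_m = 1$ be any chain satisfying the hypotheses. For $0 < x \leq 1$, set $x_k := x \wedge b_k$; using $x \wedge b_{k-1} = (x \wedge b_k) \wedge b_{k-1}$ and the modular law, each $[x_{k-1}, x_k]$ is equivalent to the subinterval $[b_{k-1}, b_{k-1} \vee (x \wedge b_k)]$ of the semistable $[b_{k-1}, b_k]$, so $\phi([x_{k-1}, x_k]) \leq \phi([b_{k-1}, b_k]) \leq \phi([0, b_1])$ whenever $x_{k-1} < x_k$. Summing via additivity of $Z$ together with the half-plane geometry yields $\phi([0, x]) \leq \phi([0, b_1])$, with strict inequality whenever $x \not\leq b_1$ (at least one factor then has strictly smaller phase). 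Hence $b_1 \in S$ and is maximal in $S$, forcing $b_1 = a_1$; induction on $[a_1, 1]$ matches the remaining $b_i$ with the $a_i$. The main obstacle is the see-saw lemma for closure of $S$ under $\vee$, where modularity enters essentially via the interval equivalence and one must track the geometry of $H$ carefully, since intermediate summands can only be shown to satisfy weak phase inequalities rather than equalities.
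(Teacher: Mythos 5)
Your proof is correct, and it is closely related to the paper's but structured differently in the uniqueness part, which is worth noting. For existence, both proofs introduce $\phi_{\max}$ and take $a_1$ as the largest element attaining it; the paper states "by construction $[0,a_1]$ is semistable" and "by maximality $\phi([a_1,a])<\phi$" without elaboration, whereas you supply the two half-plane arguments — that $S$ is closed under $\vee$ (via $[a\wedge b,b]\sim[a,a\vee b]$) and that no $c>a_1$ can have $\phi([a_1,c])\geq\phi_{\max}$ — which are exactly the facts being invoked, so this is the same construction with the details written out. For uniqueness the two arguments diverge. The paper takes two HN filtrations $a_\bullet$, $b_\bullet$, locates $k$ with $b_1\leq a_k$ but $b_1\nleq a_{k-1}$, and uses the modular equivalence $[a_{k-1}\wedge b_1,b_1]\sim[a_{k-1},a_{k-1}\vee b_1]$ together with semistability on both sides to force $\phi([b_0,b_1])\leq\phi([a_0,a_1])$, then symmetry and induction. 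You instead fix one filtration $b_\bullet$ and prove the intrinsic characterization $\phi([0,x])\leq\phi([0,b_1])$ for all $x$, strictly when $x\nleq b_1$, by intersecting $x$ against $b_\bullet$ (the chain $x_k=x\wedge b_k$) and comparing each $[x_{k-1},x_k]$ to a subinterval of the semistable $[b_{k-1},b_k]$; this pins $b_1$ down as the unique maximum of $S$ and hence as your $a_1$. Both routes rest on modularity and the convexity of phases in the half-plane; yours gives a self-contained characterization of the first HN step that makes uniqueness an immediate consequence of existence, while the paper's has the advantage (which it flags) of not using the finite-length hypothesis at all in the uniqueness half. One small point to tidy: in the $\vee$-closure argument you should treat separately the degenerate cases $a\wedge b=0$ (where $Z(\ol{[0,a\wedge b]})=0$ and the phase comparison is vacuous) and $b\leq a$ (where $a\vee b=a\in S$ trivially); as written, the inequality $\phi([0,a\wedge b])\leq\phi_{\max}$ presupposes $a\wedge b>0$.
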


The uniquely defined chain in the theorem above is the \textbf{Harder--Narasimhan filtration}. (The terms \textit{chain} and \textit{filtration} are used interchangeably here.)

\begin{proof}
We first show uniqueness, which does not require the finite length hypothesis on $L$.
Suppose
\begin{equation}
0=a_0<a_1<\ldots<a_m=1,\qquad 0=b_0<b_1<\ldots<b_n=1
\end{equation}
are Harder--Narasimhan filtrations.
If $n=0$, then $0=1$ in $L$ and so $m=0$ also.
Otherwise, let $k$ be such that $b_1\leq a_k$ but $b_1\nleq a_{k-1}$.
This means that 
\begin{equation}
a_{k-1}<a_{k-1}\vee b_1\leq a_k,\qquad a_{k-1}\wedge b_1<b_1.
\end{equation}
By semistability of $[b_0,b_1]$ and $[a_{k-1},a_k]$ we get
\begin{equation}
\phi([b_0,b_1])\leq \phi([a_{k-1}\wedge b_1,b_1])=\phi([a_{k-1},a_{k-1}\vee b_1])\leq \phi([a_{k-1},a_k])
\end{equation}
hence, by the assumption on the slopes of the intervals, $\phi([b_0,b_1])\leq\phi([a_0,a_1])$.
By symmetry we have equality, but then $k=1$ in the above argument and thus $b_1\leq a_1$.
Again, by symmetry, it must be that $a_1=b_1$, so the proof follows by induction on $\max(m,n)$ applied to the lattice $L'=[a_1,1]=[b_1,1]$.

Next we show existence, excluding the trivial case where $0=1$. 
It follows from the finite length hypothesis that the set of complex numbers $Z([0,a])$, $a>0$, is finite, so let
\begin{equation}
\phi:=\max\{\phi([0,a])\mid a>0\}
\end{equation}
and $a_1$ be the join of all $a>0$ with $\phi([0,a])=\phi$.
By construction $[0,a_1]$ is semistable, and furthermore any interval $[a_1,a]$, $a_1<a$, must satisfy $\phi([a_1,a])<\phi$ by maximality.
Thus, if the process is continued inductively with $[a_1,1]$, then the $\phi([a_k,a_{k+1}])$ are strictly decreasing.
\end{proof}

If $(L,Z)$ is a polarized lattice with HN-filtration $a_0<a_1<\ldots<a_n$ then the \textbf{mass} of $L$ is defined as
\begin{equation}\label{lattice_mass}
m(L)=\sum_{k=1}^n|Z([a_{k-1},a_k])|.
\end{equation}
It follows from the triangle inequality that
\begin{equation}\label{bps_ineq}
m(L)\geq |Z(L)|
\end{equation}
with equality if and only if $L$ is semistable.
The mass satisfies the following triangle inequality.

\begin{thm}\label{mass_triangle_ineq}
If $(L,Z)$ is a polarized lattice then
\[
m(L)\leq m([0,x])+m([x,1])
\]
for any $x\in L$. 
More generally, by induction, if $0=a_0<a_1<\ldots<a_n=1$ is any chain in $L$  then
\begin{equation}
m(L)\leq \sum_{k=1}^n m([a_{k-1},a_k]).
\end{equation}
\end{thm}

\begin{proof}
First consider the case when $[0,x]$ and $[x,1]$ are semistable.
Let $\phi_1=\phi([0,x])$ and $\phi_2=\phi([x,1])$.
If $\phi_1>\phi_2$, then $0<x<1$ is a HN filtration, and there is nothing to show.
If $\phi_1=\phi_2$, then $L$ is semistable and $m(L)=m([0,x])+m([x,1])$.
If $\phi_1<\phi_2$ let $0=a_0<a_1<\ldots<a_n=1$ be the HN filtration of $L$.
In this case
\begin{equation}
\phi_2\geq\phi([a_0,a_1])>\ldots>\phi([a_{n-1},a_n])\geq \phi_1.
\end{equation}

Indeed to see that $\phi_2\geq\phi([a_0,a_1])$ suppose $x<x\vee a_1$, then by semistablity
\begin{equation}
\phi_2=\phi([x,1])\geq\phi([x,x\vee a_1])=\phi([x\wedge a_1,a_1])\geq \phi([0,a_1])
\end{equation} 
and otherwise $a_1\leq x$ so $\phi([0,a_1])\leq \phi([0,x])=\phi_1<\phi_2$.

To show the inequality, let $z=Z([0,x])$ and $w=Z([x,1])$ which form an $\RR$-basis of $\CC$ by assumption.
If we write
\begin{equation}
Z([a_{k-1},a_k])=\lambda_k z+\mu_k w
\end{equation}
then $\lambda_k,\mu_k\geq 0$ by the bound on the phases.
Thus
\begin{equation}
|Z([a_{k-1},a_k])|\leq \lambda_k|z|+\mu_k|w|
\end{equation}
and taking the sum over all $k$ we get
\begin{equation}
m(L)\leq |z|+|w|=m([0,x])+m([x,1])
\end{equation}
since $\sum_k\lambda_k=\sum_k\mu_k=1$.

The general case is equivalent to the claim that if $0=a_0<a_1<\ldots<a_n=1$ is any chain in $L$ with $[a_{k-1},a_k]$ semistable, then
\begin{equation}
m(L)\leq \sum_{k=1}^n|Z([a_{k-1},a_k])|=:M
\end{equation}
since we get such a chain by concatenating the HN filtrations of $[0,x]$ and $[x,1]$.
The strategy is to modify the chain step-by-step until it becomes the HN filtration, with $M$ getting smaller each time.

If there are two consecutive intervals in the chain with the same phase, then they can be combined to a single interval, decreasing the length of chain by one.
If after this the chain $a_k$ is not the HN filtration, then there must be consecutive intervals with
\begin{equation}
\phi([a_{k-1},a_k])<\phi([a_k,a_{k+1}]).
\end{equation}
If $a_{k-1}<a_k<a_{k+1}$ is replaced by the HN filtration of $[a_{k-1},a_{k+1}]$, then by the first part of the proof $M$ either gets strictly smaller or the length of the chain stays the same.
Either way we must eventually reach the HN filtration, since the possible values of $M$ form a discrete subset of $\RR_{\geq 0}$, and if $M$ remains constant then the phases will eventually be in the right order.
\end{proof}

\subsection{Paracomplemented $\RR$--filtrations}
\label{sec_pcrchains}

Let $L$ be an artinian lattice.
An $\RR$-filtration in $L$ is a strictly increasing sequence of elements in $L$ ending with $1$ and labeled by real numbers.
The following notation will be convenient.
Given a finite subset $X$ of $\RR$ let $I_0,\ldots,I_n$ be the connected components of the complement $\RR\setminus X$ in their natural order. 
Any chain
\begin{equation}
0=a_0<a_1<\ldots<a_n=1
\end{equation}
in $L$ defines a locally constant increasing function $a:\RR\setminus X\to L$.
Let $a_+,a_-:\RR\to L$ be the upper-/lower-semicontinuous extensions of $a$, then we call this pair of increasing functions an \textbf{$\mathbf{\RR}$-filtration} in $L$.
Thus an $\RR$-filtration in $L$ is a pair of increasing functions $a_\pm:\RR\to L$ with $a_+$ upper-semicontinuous, $a_-$ lower semicontinuous, $a_+=a_-$ outside a finite set, and $a_\pm(\lambda)=0$ for $\lambda\ll 0$ and $a_\pm(\lambda)=1$ for $\lambda\gg 0$. 
Of course any one of $a_\pm$ determines the other, but it will be convenient to have both.
The \textbf{support} of an $\RR$-filtration is the finite set
\begin{equation}
\mathrm{supp}(a)=\{\lambda\in\RR\mid a_+(\lambda)\neq a_-(\lambda)\}.
\end{equation}

A lattice $L$ with $0,1$ is \textbf{complemented} if any $a\in L$ has a \textbf{complement}: An element $b\in L$ with 
\begin{equation}
a\wedge b=0,\qquad a\vee b=1.
\end{equation}
Note that for the lattice of subobjects in a given object $E$  of an artinian category, the property being complemented means that $E$ is semisimple.

We call an $\RR$-filtration, $a$, \textbf{paracomplemented} if all intervals $[a_+(\lambda),a_+(\lambda+1)]$, $\lambda\in\RR$, are complemented lattices. 
Equivalently, all intervals $[a_-(\lambda),a_-(\lambda+1)]$ are complemented lattices.

Let $\mathcal B(L)$ be the set of all paracomplemented $\RR$-filtrations in $L$.
Denote by $C_0(\RR;K(L))$ the abelian group of finite $K(L)$-linear combinations of points in $\RR$, with the obvious topology coming from $\RR$.
We can introduce a topology in $\mathcal B(L)$ such that the map
\begin{equation}\label{prc_cl}
\mathrm{cl}:\mathcal B(L)\to C_0(\RR;K(L)),\qquad a\mapsto \sum_{\lambda\in\RR}\ol{[a_-(\lambda),a_+(\lambda)]}\lambda
\end{equation}
is continuous.
A neighborhood basis at $a\in \mathcal B(L)$, is given by sets
\begin{equation}
U_r(a)=\left\{b\in\mathcal B(L)\mid \mathrm{dist}(\lambda,\mathrm{supp}(a))\geq r\implies a_\pm(\lambda)=b_\pm(\lambda)\right\}
\end{equation}
where $r>0$.
This topology is Hausdorff, but generally not locally compact for infinite $L$.

We will describe the local structure around $a\in\mathcal B(L)$ in terms of another artinian lattice, $\Lambda(a)$.
By definition, an element $x\in\Lambda(a)$ is given by $x_\lambda\in[a_-(\lambda),a_+(\lambda)]$ such that $a_+(\lambda)\in [x_\lambda,x_{\lambda+1}]$ has a complement for every $\lambda\in\RR$.
Thus, $\Lambda(a)$ is a subset of
\begin{equation}
\prod_{\lambda\in\RR}[a_-(\lambda),a_+(\lambda)]
\end{equation}
which is an artinian lattice as an essentially finite product of such.
However, it is not obvious that $\Lambda(a)$ is closed under $\vee$ and $\wedge$, i.e. is a sublattice.
Showing this will require a lemma about complements.

We draw a diagram
\begin{equation}
\begin{tikzpicture}[baseline=(current  bounding  box.center)]
\node (T) at (0,1) {$d$};
\node (L) at (-1,0) {$a$};
\node (R) at (1,0) {$b$};
\node (B) at (0,-1) {$c$};
\draw
(T) edge (L)
(L) edge (B)
(T) edge (R)
(R) edge (B);
\end{tikzpicture}
\end{equation}
to represent the statement that $c=a\wedge b$ and $d=a\vee b$, i.e. that $a$ has complement $b$ in $[c,d]$.
These diagrams satisfy \textit{cut and paste rules}:
\begin{equation}
\begin{tikzpicture}[scale=.8,baseline=(current  bounding  box.center)]
\node (T) at (0,1) {$\mathbf d$};
\node (L) at (-1,0) {$a$};
\node (R) at (1,0) {$\mathbf b$};
\node (B) at (0,-1) {$c$};
\draw
(T) edge (L)
(L) edge (B)
(T) edge (R)
(R) edge (B);
\node at (2,0) {and};
\node (T) at (4,1) {$e$};
\node (L) at (3,0) {$\mathbf d$};
\node (R) at (5,0) {$f$};
\node (B) at (4,-1) {$\mathbf b$};
\draw
(T) edge (L)
(L) edge (B)
(T) edge (R)
(R) edge (B);
\node at (6,0) {$\implies$};
\node (T) at (8,1) {$e$};
\node (L) at (7,0) {$a$};
\node (R) at (9,0) {$f$};
\node (B) at (8,-1) {$c$};
\draw
(T) edge (L)
(L) edge (B)
(T) edge (R)
(R) edge (B);
\end{tikzpicture}
\end{equation}
\begin{equation}
\begin{tikzpicture}[scale=.8,baseline=(current  bounding  box.center)]
\node (T) at (0,1) {$d$};
\node (L) at (-1,0) {$a$};
\node (R) at (1,0) {$b$};
\node (B) at (0,-1) {$c$};
\draw
(T) edge (L)
(L) edge (B)
(T) edge (R)
(R) edge (B);
\node at (3.5,0) {and $c\leq \mathbf x\leq b\implies$};
\node (T) at (7,1) {$\mathbf{a\vee x}$};
\node (L) at (6,0) {$a$};
\node (R) at (8,0) {$\mathbf x$};
\node (B) at (7,-1) {$c$};
\draw
(T) edge (L)
(L) edge (B)
(T) edge (R)
(R) edge (B);
\node (T) at (11,1) {$d$};
\node (L) at (10,0) {$\mathbf{a\vee x}$};
\node (R) at (12,0) {$b$};
\node (B) at (11,-1) {$\mathbf x$};
\draw
(T) edge (L)
(L) edge (B)
(T) edge (R)
(R) edge (B);
\end{tikzpicture}
\end{equation}
In the following we will not draw all the diagrams for practical reasons, but they proved a useful device to avoid getting lost in formulas.

\begin{lem}\label{compl_in_union}
Let $L$ be a modular lattice with $0$. 
Suppose $x,a,b\in L$ such that $x\leq a\wedge b$, $x$ has a complement in $[0,a]$ and $[0,b]$, and $[x,a]$ is complemented, then $x$ has a complement in $[0,a\vee b]$.
\end{lem}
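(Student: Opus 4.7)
The plan is to reduce to the special case $a \wedge b = x$, where a complement can be written down directly in terms of the given data, and to use the hypothesis that $[x,a]$ is complemented precisely to effect that reduction.

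First, I would fix a complement $a'$ of $x$ in $[0,a]$ and a complement $b'$ of $x$ in $[0,b]$, as furnished by the hypotheses. Next, using that $[x,a]$ is complemented, I pick $y \in [x,a]$ complementary to $a \wedge b$ in $[x,a]$, so that $y \wedge (a \wedge b) = x$ and $y \vee (a \wedge b) = a$. The point of $y$ is that it is transverse to $b$ modulo $x$ while still spanning $a \vee b$ together with $b$: since $y \leq a$, one has $y \wedge b = y \wedge (a \wedge b) = x$, and $y \vee b \geq y \vee (a \wedge b) = a$ forces $y \vee b = a \vee b$. A short application of the modular law using $x \leq y$ then shows that $y \wedge a'$ is a complement of $x$ in the smaller interval $[0,y]$, so we have effectively replaced $a$ by $y$ and reduced to the case $y \wedge b = x$.

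The candidate complement of $x$ in $[0, a \vee b]$ is then $f := (y \wedge a') \vee b'$. For the join, $x \vee f = y \vee b'$, which equals $y \vee b = a \vee b$ because $b' \leq b$ and $b = x \vee b' \leq y \vee b'$. For the meet, the crucial observation is that $(y \wedge a') \wedge b \leq y \wedge b = x$ while $(y \wedge a') \wedge x \leq a' \wedge x = 0$, so $(y \wedge a') \wedge b = 0$; one application of the modular law with $b' \leq b$ then gives $f \wedge b = b' \vee ((y \wedge a') \wedge b) = b'$, and meeting with $x \leq b$ yields $x \wedge f = x \wedge b' = 0$.

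The main conceptual step is the reduction via $y$. Without the hypothesis that $[x,a]$ is complemented, the naive choice $a' \vee b'$ can fail --- for instance when $a = b$ but $a' \neq b'$, since then $x \wedge (a' \vee b')$ need not vanish --- and the role of $y$ is precisely to cut $a$ down to an element that still joins with $b$ to $a \vee b$ but whose intersection with $b$ is already just $x$.
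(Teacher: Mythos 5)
Your proof is correct and follows essentially the same route as the paper's: you take a complement $y$ of $a\wedge b$ in $[x,a]$ (the paper calls this $a'$), observe that $y$ and $b$ meet at $x$ and join to $a\vee b$, and then build the complement as the join of a complement of $x$ in $[0,y]$ with a complement of $x$ in $[0,b]$. The only cosmetic difference is that you verify the modular-law manipulations explicitly, where the paper uses its diagram calculus, and you spell out that $y\wedge a'$ serves as the required complement of $x$ in $[0,y]$, which the paper asserts without comment.
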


\begin{proof}
Let $a'$ be a complement of $a\wedge b$ in $[x,a]$, then we have a diagram
\begin{equation}
\begin{tikzpicture}[scale=.8,baseline=(current  bounding  box.center)]
\node (T) at (0,1) {$a\vee b$};
\node (L) at (-1,0) {$a'$};
\node (R) at (1,0) {$b$};
\node (B) at (0,-1) {$x$};
\draw
(T) edge (L)
(L) edge (B)
(T) edge (R)
(R) edge (B);
\end{tikzpicture}
\end{equation}
and $x$ has a complement $c$ in $[0,a']$.
If $d$ is a complement of $x$ in $[0,b]$, then we have a diagram
\begin{equation}
\begin{tikzpicture}[scale=.8,baseline=(current  bounding  box.center)]
\node (T) at (0,1) {$a\vee b$};
\node (L) at (-1,0) {$c\vee d$};
\node (R) at (1,0) {$b$};
\node (B) at (0,-1) {$d$};
\draw
(T) edge (L)
(L) edge (B)
(T) edge (R)
(R) edge (B);
\end{tikzpicture}
\end{equation}
which shows that $c\vee d$ is a complement of $x$ in $[0,a\vee b]$.
\end{proof}

\begin{prop}
If $a\in\mathcal B(L)$ then $\Lambda(a)\subset\prod[a_-(\lambda),a_+(\lambda)]$ is a sublattice, hence an artinian lattice.
\end{prop}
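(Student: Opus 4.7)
The plan is to verify that the componentwise operations $\vee$ and $\wedge$ on the ambient product $\prod_\lambda [a_-(\lambda), a_+(\lambda)]$ keep us inside $\Lambda(a)$; modularity and finite length are then inherited from the ambient lattice. The $\wedge$--case being dual, I focus on $\vee$. Given $x, y \in \Lambda(a)$, put $z_\lambda := x_\lambda \vee y_\lambda$. Since $x_\lambda, y_\lambda \in [a_-(\lambda), a_+(\lambda)]$, so does $z_\lambda$, and the only real task is to prove that $a_+(\lambda)$ admits a complement in $[z_\lambda, z_{\lambda+1}]$ for each $\lambda$.

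Fix $\lambda$ and abbreviate $A = x_\lambda$, $B = x_{\lambda+1}$, $C = y_\lambda$, $D = y_{\lambda+1}$, $P = a_+(\lambda)$. The elementary inequality $a_+(\lambda) \leq a_-(\lambda+1)$, together with the definition of $\Lambda(a)$, yields $A, C \leq P \leq B \wedge D$, hence $A \vee C \leq P \leq (B \vee C) \wedge (A \vee D)$. The strategy is to apply Lemma~\ref{compl_in_union} inside the interval $[A \vee C, B \vee D]$, with $A \vee C$ in the role of zero, $P$ in the role of $x$, and $B \vee C$, $A \vee D$ in the roles of $a$ and $b$; note $(B \vee C) \vee (A \vee D) = B \vee D$. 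Three conditions must be checked. (i) $P$ has a complement in $[A \vee C, B \vee C]$: the standard modular isomorphism identifies this interval with $[A \vee (C \wedge B), B]$, a sub-interval of $[A, B]$ containing $P$; since $[A, B]$ is complemented (paracomplementedness of $x$) and every complemented modular lattice is relatively complemented, $P$ admits a complement there. (ii) By the symmetric argument applied to $[C, D]$, $P$ has a complement in $[A \vee C, A \vee D]$. (iii) $[P, B \vee C]$ is complemented: since $B \leq a_+(\lambda+1)$ and $C \leq a_+(\lambda) \leq a_+(\lambda+1)$, we have $B \vee C \leq a_+(\lambda+1)$, so $[P, B \vee C]$ sits inside $[a_+(\lambda), a_+(\lambda+1)]$, which is complemented by the paracomplementedness of $a$ itself.

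With these three items in hand, Lemma~\ref{compl_in_union} produces the required complement of $P$ in $[A \vee C, B \vee D]$, and the $\wedge$--case follows by the order-reversed version of the same argument, using the paracomplementedness of $a$ in its equivalent form ``$[a_-(\lambda), a_-(\lambda+1)]$ is complemented''. I expect the main subtlety to be precisely condition (iii) above: it is the unique step where the paracomplementedness of $a$, as opposed to merely of $x$ and $y$, enters in an essential way, and it is what distinguishes paracomplemented $\RR$-filtrations from general ones.
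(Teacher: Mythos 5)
Your proof is correct and follows essentially the same route as the paper: apply Lemma~\ref{compl_in_union} with $0 = x_\lambda \vee y_\lambda$, $x = a_+(\lambda)$, and $a, b$ the two intervals' tops, checking the three hypotheses. The only cosmetic difference is that because $C = y_\lambda \le a_+(\lambda) \le x_{\lambda+1} = B$ and $A = x_\lambda \le a_+(\lambda) \le y_{\lambda+1} = D$, one has $B \vee C = B$ and $A \vee D = D$, so the paper works directly with $[x_\lambda\vee y_\lambda, x_{\lambda+1}]$, $[x_\lambda\vee y_\lambda, y_{\lambda+1}]$ and $[a_+(\lambda), x_{\lambda+1}]$ and the "modular isomorphism" detour in your step (i) is unnecessary — the interval $[A\vee C, B]$ already sits inside $[A,B]$ without any transport. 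You are also right to flag step (iii) as the place where the paracomplementedness of $a$ (rather than of $x,y$) is essential; the paper makes the same use of it.
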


\begin{proof}
Suppose $x,y\in\Lambda(a)$, then $a_+(\lambda)$ has a complement in both $[x_\lambda\vee y_\lambda,x_{\lambda+1}]$ and $[x_\lambda\vee y_\lambda,y_{\lambda+1}]$, and $[a_+(\lambda),x_{\lambda+1}]$ is complemented.
By the lemma, $a_+(\lambda)$ has a complement in $[x_\lambda\vee y_\lambda,x_{\lambda+1}\vee y_{\lambda+1}]$, hence $x\vee y\in \Lambda(a)$.
By the dual argument, $\Lambda(a)$ is also closed under $\wedge$.
\end{proof}

The defining condition for elements in $\Lambda(a)$ can be reformulated.

\begin{lem}
Let $a\in\mathcal B(L)$, $x\in\prod[a_-(\lambda),a_+(\lambda)]$, $\lambda\in\RR$,
then the following are equivalent:
\begin{enumerate}
\item 
$a_+(\lambda)$ has a complement in $[x_\lambda,x_{\lambda+1}]$.
\item
$a_-(\lambda+1)$ has a complement in $[x_\lambda,x_{\lambda+1}]$.
\item
$[x_\lambda,x_{\lambda+1}]$ is complemented.
\end{enumerate}
These conditions hold for all $\lambda\in\RR$ if and only if $x\in\Lambda(a)$.
\end{lem}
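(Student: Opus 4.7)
The plan is to prove $(3)\Rightarrow(1),(2)$ trivially, and then $(1)\Rightarrow(3)$ and $(2)\Rightarrow(3)$ by explicit constructions of complements; the final assertion of the lemma is then just the definition of $\Lambda(a)$, which is precisely condition (1) at every $\lambda$.

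First I would record the chain of inequalities
\[
a_-(\lambda)\leq x_\lambda\leq a_+(\lambda)\leq a_-(\lambda+1)\leq x_{\lambda+1}\leq a_+(\lambda+1)
\]
(the middle inequality by monotonicity of $a$) and establish as a preliminary step that each of the four intervals $[x_\lambda,a_+(\lambda)]$, $[a_+(\lambda),x_{\lambda+1}]$, $[x_\lambda,a_-(\lambda+1)]$, $[a_-(\lambda+1),x_{\lambda+1}]$ is complemented. For the first, I would pick $\mu\in[\lambda-1,\lambda)$ with no jumps of $a$ in $(\mu,\lambda)$, which is possible since $\mathrm{supp}(a)$ is finite; then $a_+(\mu)=a_-(\lambda)\leq x_\lambda$ and $a_+(\mu+1)\geq a_+(\lambda)$, so $[x_\lambda,a_+(\lambda)]$ sits inside the complemented lattice $[a_+(\mu),a_+(\mu+1)]$. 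The other three cases are analogous, using either $[a_+(\lambda),a_+(\lambda+1)]$ directly or the equivalent $[a_-(\cdot),a_-(\cdot+1)]$ form of paracomplementedness. I would then invoke the standard fact that a sub-interval of a complemented modular lattice is complemented: for $y\in[a,b]$ with complement $y'$ in the ambient lattice, the element $(y'\vee a)\wedge b$ is a complement of $y$ in $[a,b]$ by the modular law.

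With this in hand $(3)\Rightarrow(1),(2)$ is automatic. The main implication is $(1)\Rightarrow(3)$. Given $(1)$, fix a complement $c$ of $a_+(\lambda)$ in $[x_\lambda,x_{\lambda+1}]$ and an arbitrary $y\in[x_\lambda,x_{\lambda+1}]$. By the preliminary step, choose a complement $p$ of $y\wedge a_+(\lambda)$ in $[x_\lambda,a_+(\lambda)]$ and a complement $r$ of $y\vee a_+(\lambda)$ in $[a_+(\lambda),x_{\lambda+1}]$. My candidate complement of $y$ in $[x_\lambda,x_{\lambda+1}]$ is
\[
y^c := p\vee(r\wedge c).
\]
Two applications of the modular law (using $p\leq a_+(\lambda)$ and $a_+(\lambda)\leq r$) would give the identities $y^c\wedge a_+(\lambda)=p$ and $y^c\vee a_+(\lambda)=r$. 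From these, together with $y\vee p\geq (y\wedge a_+(\lambda))\vee p=a_+(\lambda)$, the verification $y\vee y^c=x_{\lambda+1}$ is routine. The meet side uses the bound $y^c\leq r$ (from $p\leq a_+(\lambda)\leq r$ and $r\wedge c\leq r$) to get
\[
y\wedge y^c\;\leq\; y\wedge r\;\leq\;(y\vee a_+(\lambda))\wedge r\;=\;a_+(\lambda),
\]
and hence $y\wedge y^c=(y\wedge y^c)\wedge a_+(\lambda)=y\wedge p\leq (y\wedge a_+(\lambda))\wedge p=x_\lambda$, as required. The symmetric implication $(2)\Rightarrow(3)$ follows by the identical argument with $a_-(\lambda+1)$ playing the role of $a_+(\lambda)$, now relying on the other two complemented intervals from the preliminary step.

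The main obstacle is identifying the correct formula for $y^c$ in the $(1)\Rightarrow(3)$ step. Complementedness of the individual pieces $[x_\lambda,a_+(\lambda)]$ and $[a_+(\lambda),x_{\lambda+1}]$ is not by itself sufficient to force $[x_\lambda,x_{\lambda+1}]$ to be complemented, as the subspace lattice of $(\mathbb{Z}/2)^2$ already illustrates (with $a_+(\lambda)$ any line). The transverse complement $c$ must therefore enter nontrivially, and the mixing $p\vee(r\wedge c)$ is the formula that threads $p$, $r$, and $c$ together so that both the meet and the join with $y$ come out correctly.
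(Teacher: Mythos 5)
Your proof is correct and, modulo presentation, is the paper's own argument: the paper isolates the key step as a helper lemma (in a bound modular lattice $L$, if $[0,x]$ and $[x,1]$ are complemented and $x$ has a complement, then $L$ is complemented) and proves it with exactly the complement formula $b\vee(c\wedge d)$ that you write as $y^c=p\vee(r\wedge c)$; you inline that lemma and also spell out the (implicit in the paper) verification via paracomplementedness that $[x_\lambda,a_+(\lambda)]$ and $[a_+(\lambda),x_{\lambda+1}]$ are themselves complemented. One cosmetic slip in your final aside: the subspace lattice of $(\ZZ/2)^2$ is the diamond $M_3$, which \emph{is} complemented, so it does not illustrate the necessity of $c$ --- a chain $0<x<1$ (e.g.\ the subgroup lattice of $\ZZ/4$) is the example you want.
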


This is a direct consequence of the following lemma.

\begin{lem}
Let $L$ be a bound modular lattice, $x\in L$ such that $[0,x]$ and $[x,1]$ are complemented, and $x$ has a complement in $L$.
Then $L$ is complemented.
\end{lem}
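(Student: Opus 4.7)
The plan is to fix an arbitrary $y\in L$ and construct an explicit complement of $y$ by gluing together complements drawn from the two given complemented intervals, using a complement of $x$ as the bridge between them. Pick any complement $c$ of $x$ in $L$, so $x\wedge c=0$ and $x\vee c=1$. The modular isomorphism $[0,c]\xrightarrow{\sim}[x,1]$, $u\mapsto u\vee x$, transports the complemented structure of $[x,1]$ to $[0,c]$, so both $[0,x]$ and $[0,c]$ are complemented intervals, meeting only at $0$ and joining to $1$.

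Next, choose $a\in[0,x]$ complementing $y\wedge x$, choose $c''\in[0,c]$ complementing $c':=(y\vee x)\wedge c$, and propose $z:=a\vee c''$ as a complement of $y$ in $L$. A routine modular rewrite shows that $x\vee c''$ is in fact a complement of $y\vee x$ inside $[x,1]$ (one checks $c'\vee x=y\vee x$ via modularity, and the isomorphism of the previous paragraph then carries $c''$ to $x\vee c''$). The join condition $z\vee y=1$ then follows quickly: $a\vee y\geq a\vee(y\wedge x)=x$, and $c=c'\vee c''\leq(y\vee x)\vee c''$ pushes $x\vee c''\vee y$ up to $x\vee c=1$.

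The substantive step, where all the modular-law bookkeeping is concentrated, is the meet condition $z\wedge y=0$, and I expect this to be the main obstacle. The plan here is a two-stage squeeze of $(a\vee c'')\wedge y$. Stage one: modularity inside $[x,1]$ together with $c''\wedge c'=0$ yields $(x\vee c'')\wedge(y\vee x)=x$, which combined with $a\leq x$ and $y\leq y\vee x$ bounds $(a\vee c'')\wedge y$ above by $x$. Stage two: another application of modularity, using $c''\wedge x\leq c\wedge x=0$, gives $(a\vee c'')\wedge x=a\vee(c''\wedge x)=a$, so the previous bound upgrades to $(a\vee c'')\wedge y\leq a$; intersecting once more with $y$ and using $a\leq x$ lands inside $a\wedge(y\wedge x)=0$ by the choice of $a$. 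The cut-and-paste diagrams introduced earlier in this subsection provide a useful visual accounting for keeping track of which complements sit in which subintervals during this final computation.
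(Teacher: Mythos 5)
Your proof is correct and essentially the same as the paper's. The paper fixes an arbitrary $a\in L$, takes $b$ complementing $a\wedge x$ in $[0,x]$, $c$ complementing $a\vee x$ in $[x,1]$, $d$ complementing $x$ in $L$, and shows via a cut-and-paste diagram that $b\vee(c\wedge d)$ complements $a$; your $z=a\vee c''$ is exactly that same element (with your $a,c,c''$ playing the roles of the paper's $b,d,c\wedge d$, noting $c\wedge d=(c''\vee x)\wedge c=c''$ by modularity), and your ``two-stage squeeze'' unpacks the same diamond computations the paper leaves implicit in its diagram.
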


\begin{proof}
Let $a\in L$, $b$ a complement of $a\wedge x\in[0,x]$, $c$ a complement of $a\vee x\in[x,1]$, and $d$ a complement of $x$ in $L$, then we have the following diagram.
\begin{equation}
\begin{tikzpicture}[scale=.8,baseline=(current  bounding  box.center)]
\node (1) at (0,0) {$a$};
\node (2) at (1,1) {$a\vee x$};
\node (3) at (2,2) {$1$};
\node (4) at (1,-1) {$a\wedge x$};
\node (5) at (2,0) {$x$};
\node (6) at (3,1) {$c$};
\node (7) at (2,-2) {$0$};
\node (8) at (3,-1) {$b$};
\node (9) at (4,0) {$b\vee (c\wedge d)$};
\draw
(1) edge (2) (2) edge (3) (4) edge (5) (5) edge (6) (7) edge (8) (8) edge (9)
(1) edge (4) (2) edge (5) (3) edge (6) (4) edge (7) (5) edge (8) (6) edge (9);
\end{tikzpicture}
\end{equation}
This shows that $a$ has complement $b\vee (c\wedge d)$ in $L$.
\end{proof}

Note that the defining condition for $x$ to be in $\Lambda(a)$ only relates $x_\lambda$ and $x_{\lambda+1}$, so $x_\lambda$ and $x_\mu$ are completely independent if $\lambda-\mu$ is not an integer.
Hence $\Lambda(a)$ splits as a product
\begin{gather}
\Lambda(a)=\prod_{\tau\in\RR/\ZZ}\Lambda_\tau(a) \label{loc_lattice_splitting} \\
\Lambda_\tau(a)\subset\prod_{\lambda\in\tau}[a_-(\lambda),a_+(\lambda)].
\end{gather}

For $a\in\mathcal B(L)$ define
\begin{gather}
\rho_1(a)=\min\left\{|\lambda-\mu|\mid \lambda\neq\mu,\lambda,\mu\in\mathrm{supp}(a)\right\} \\
\rho_2(a)=\min\left\{|\lambda-\mu|-1\mid|\lambda-\mu|>1,\lambda,\mu\in\mathrm{supp}(a)\right\} \\
\rho(a)=\frac{1}{2}\min(\rho_1(a),\rho_2(a),1)>0.
\end{gather}
The following gives a local description of $\mathcal B(L)$.

\begin{prop}\label{prc_space_local}
Let $a\in\mathcal B(L)$, $\rho=\rho(a)$, then there is a canonical bijection between $U_\rho(a)$ and the set of $\RR$-filtrations in $\Lambda(a)$ with support in $(-\rho,\rho)$.
\end{prop}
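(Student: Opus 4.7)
The plan is to construct mutually inverse maps. Given $b \in U_\rho(a)$, I would define $\Phi(b)$ to be the $\RR$-filtration $y$ in $\Lambda(a)$ with
\[
y_\pm(\mu)_\lambda = b_\pm(\lambda + \mu) \qquad (\lambda \in \mathrm{supp}(a),\ \mu \in \RR),
\]
while components indexed by $\lambda \notin \mathrm{supp}(a)$ are forced to $a_+(\lambda)$ since $[a_-(\lambda), a_+(\lambda)]$ is then a singleton. Conversely, for an $\RR$-filtration $y$ in $\Lambda(a)$ with support in $(-\rho, \rho)$, I would set $\Psi(y)_\pm(\nu) = a_\pm(\nu)$ when $\mathrm{dist}(\nu, \mathrm{supp}(a)) \geq \rho$ and $\Psi(y)_\pm(\lambda + \mu) = y_\pm(\mu)_\lambda$ when $\lambda \in \mathrm{supp}(a)$ and $|\mu| < \rho$. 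The inequality $\rho \leq \rho_1(a)/2$ makes the disturbance zones $(\lambda - \rho, \lambda + \rho)$ pairwise disjoint, so $\Psi$ is well-defined; boundary consistency at $|\mu| = \rho$ is automatic because $y_\pm(\pm\rho)$ equals the top or bottom of $\Lambda(a)$, which restricts componentwise to $a_-(\lambda)$ or $a_+(\lambda)$.

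The substantive content is compatibility with the complementation constraints: showing $\Phi(b) \in \Lambda(a)$ componentwise and showing $\Psi(y)$ is paracomplemented. By the preceding equivalence lemma, both conditions reduce to asking that certain intervals are complemented lattices. When both $\lambda$ and $\lambda + 1$ lie in $\mathrm{supp}(a)$, the relevant interval $[y_\pm(\mu)_\lambda, y_\pm(\mu)_{\lambda+1}]$ equals $[b_\pm(\lambda + \mu), b_\pm(\lambda + 1 + \mu)]$, and the two paracomplementation hypotheses match up directly. When $\lambda + 1 \notin \mathrm{supp}(a)$, the bound $\rho < \rho_2(a)/2$ rules out any $a$-support point at distance strictly greater than $1$ from $\lambda$ encroaching on the $\rho$-neighborhood of $\lambda + 1$, while $\rho < 1/2$ controls the remaining interactions; combining these with paracomplementation of $a$ and the algebraic fact that every interval of a finite length complemented modular lattice is itself complemented lets the complementation be transferred between a larger known interval and the sub-interval that actually appears.

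Once both $\Phi$ and $\Psi$ are verified to land in the correct target sets, the identities $\Phi \circ \Psi = \mathrm{id}$ and $\Psi \circ \Phi = \mathrm{id}$ follow by directly unwinding the definitions. The hard part will be the case analysis for the corner situation where two support points $\lambda < \lambda'$ of $a$ satisfy $\lambda' - \lambda \in (1 - \rho, 1)$, forcing the clusters of $b$ near $\lambda$ and near $\lambda'$ both to contribute to the paracomplementation condition across the integer shift. The splitting $\Lambda(a) = \prod_{\tau \in \RR/\ZZ} \Lambda_\tau(a)$ isolates exactly which pairs of support points are coupled, and the equivalence lemma converts "complementation of the whole interval" into "$a_+(\lambda)$ admits a complement", so each remaining check reduces to one of the two input complementation hypotheses.
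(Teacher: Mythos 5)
Your construction of the two maps is the same as the paper's, and the verification strategy also matches: define $x_\pm(\mu)_\lambda = b_\pm(\lambda+\mu)$ (and conversely), check that the relevant intervals are complemented, and transfer complementation using the fact that intervals in a complemented modular lattice are complemented. The bound $\rho\le\rho_1/2$ indeed suffices to make the forward map land in $\Lambda(a)$, while $\rho\le\rho_2/2$ and $\rho\le 1/2$ are used for the inverse map, just as you describe.

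One small imprecision worth noting. When verifying that $\Psi(y)$ is paracomplemented, the paper's case distinction for two support points $\lambda<\lambda'$ of $a$ appearing in the same integer shift is by the value of $\lambda'-\lambda$ relative to $1$: the case $\lambda'-\lambda>1$ is excluded by the $\rho_2$ bound, the case $\lambda'-\lambda<1$ (which includes your ``corner'' range $\lambda'-\lambda\in(1-\rho,1)$, or really $(1-2\rho,1)$, where the two disturbance zones interact mod $1$) is handled uniformly by embedding the interval into $[a_-(\lambda),a_+(\lambda')]$ and using paracomplementation of $a$, and the genuinely separate case where a new ingredient is needed is $\lambda'-\lambda=1$ exactly: there $\alpha=\alpha'$ and the complementation of $[x_+(\alpha)_\lambda,x_+(\alpha)_{\lambda+1}]$ comes from the defining condition $x_+(\alpha)\in\Lambda(a)$, not from any paracomplementation hypothesis on $y$ (indeed the target set $V$ carries none). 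You implicitly cover this under ``when both $\lambda$ and $\lambda+1$ lie in $\mathrm{supp}(a)$,'' but calling it a matching of ``paracomplementation hypotheses'' slightly mislabels what is used: for $\Phi$ it is paracomplementation of $b$, while for $\Psi$ it is the $\Lambda(a)$-membership constraint on the values of $y$. With that relabeling, your sketch is exactly the paper's proof.
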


\begin{proof}
Denote the set of $\RR$-filtrations in $\Lambda(a)$ with support in $(-\rho,\rho)$ by $V$.
The map $U_\rho(a)\to V$ sends $b\in U_\rho(a)$ to the $\RR$-chain $x\in\Lambda(a)$ with
\begin{equation}\label{prc_def}
x_\pm(\alpha)_\lambda=b_\pm(\alpha+\lambda)\quad\in [a_-(\lambda),a_+(\lambda)]
\end{equation}
for $\alpha\in [-\rho,\rho]$ and $\lambda\in\mathrm{supp}(a)$.
To see that $x_\pm(\alpha)\in\Lambda(a)$ note that if $\lambda,\lambda+1\in\mathrm{supp}(a)$ then
\begin{equation}
[x_+(\alpha)_\lambda,x_+(\alpha)_{\lambda+1}]=[b_+(\alpha+\lambda),b_+(\alpha+\lambda+1)]
\end{equation}
is complemented since $b$ is paracomplemented by assumption.
For this part we only used $\rho\leq \rho_1/2$, not $\rho\leq\rho_2/2$.

The inverse map $V\to U_\rho(a)$ sends $x\in V$ to $b\in U_\rho(a)$ with the same relation \eqref{prc_def}.
We need to check that $b$ is paracomplemented.
So suppose that
\begin{equation}
\alpha+\lambda+1=\alpha'+\lambda',\qquad \lambda,\lambda'\in\mathrm{supp}(a),\qquad\alpha,\alpha'\in(-\rho,\rho).
\end{equation}
Because of $\rho\leq 1/2$ we have $\lambda<\lambda'$ also.
We need to show that 
\begin{equation}
[b_+(\alpha+\lambda),b_+(\alpha'+\lambda')]=[x_+(\alpha)_\lambda,x_+(\alpha')_{\lambda'}]
\end{equation}
is complemented.
If $\lambda'-\lambda>1$ the by definition of $\rho$ we get $2\rho\leq \lambda'-\lambda-1$ hence
\begin{equation}
\alpha+\lambda+1<\rho+\lambda+1\leq-\rho+\lambda'<\alpha'+\lambda'
\end{equation}
which is a contradiction, thus $\lambda'-\lambda\leq 1$.
If $\lambda'-\lambda<1$, then 
\begin{equation}
[x_+(\alpha)_\lambda,x_+(\alpha')_{\lambda'}]\subset [a_-(\lambda),a_+(\lambda')]
\end{equation}
which is complemented since $a$ is paracomplemented.
Otherwise $\lambda'=\lambda+1$ so $\alpha=\alpha'$, but then the interval is complemented because $x_+(\alpha)\in\Lambda(a)$.
\end{proof}

If $a\in\mathcal B(L)$ and $x$ is an $\RR$-filtration in $\Lambda(a)$ with support in $(-\rho,\rho)$, $\rho=\rho(a)$, and $b\in\mathcal B(L)$ corresponds to $x$, then $\Lambda(b)$ splits as a product
\begin{equation}\label{nearby_lattice_split}
\Lambda(b)=\prod_{\lambda\in\RR}[x_-(\lambda),x_+(\lambda)]
\end{equation}
which follows from \eqref{loc_lattice_splitting} and the definition of $\rho$.
Essentially, as $a$ is deformed to $b$ classes of the support in $\RR/\ZZ=S^1$ split but do not collide.

\subsection{Weight filtrations}
\label{subsec_balanced}

In this section we define a weight-type filtration in any finite length modular lattice by proving an existence and uniqueness theorem.

Let $L$ be an artinian lattice and let $X:K(L)\to\RR$ be a homomorphism with $X(K^+(L))\subset \RR_{>0}$.
For any $a\in\mathcal B(L)$ the lattice $\Lambda(a)$ has a canonical polarization given by
\begin{equation}
Z([x,y])=\sum_{\lambda\in\mathrm{supp}(a)}(1+\lambda i)X([x_\lambda,y_\lambda])
\end{equation}
for $x,y\in\Lambda(a)$, $x\leq y$.
The main result of this section is the following.

\begin{thm}\label{balanced_chain_thm}
Let $L$ be an artinian lattice and $X:K^+(L)\to\RR_{>0}$ a semigroup homomorphism.
Then there exists a unique $a\in\mathcal B(L)$ such that $\Lambda(a)$ is semistable with phase $\phi(\Lambda(a))=0$.
\end{thm}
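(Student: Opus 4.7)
The plan is to first rewrite the semistability-with-phase-zero condition on $\Lambda(a)$ as concrete numerical conditions on $a$. Unpacking the definition of the polarization, $Z(\Lambda(a)) = \sum_\lambda (1 + i\lambda) X([a_-(\lambda), a_+(\lambda)])$, so phase zero is the balancing identity $\sum_\lambda \lambda X([a_-(\lambda), a_+(\lambda)]) = 0$, and semistability asks that every $x \in \Lambda(a)$ satisfy $\sum_\lambda \lambda X([a_-(\lambda), x_\lambda]) \leq 0$. These are the direct analogs of the two conditions (balancing and downward-closed inequalities) that characterized the weight grading on a DAG in Section \ref{sec_weight_dag}.

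For existence I would mimic the convex optimization strategy of Section \ref{sec_weight_dag}. Consider the energy functional
\[
E(a) = \sum_\lambda \lambda^2 X([a_-(\lambda), a_+(\lambda)]).
\]
Because $L$ is artinian, there are only finitely many chain types $0 = b_0 < \cdots < b_n = 1$ in $L$, and for each such chain the set of labelings $\lambda_1 < \cdots < \lambda_n$ that make $a$ paracomplemented is a closed convex subset of $\RR^n$, cut out by constraints of the form $\lambda_j - \lambda_i \geq 1$ whenever the subquotient $[b_i, b_j]$ fails to be complemented. On each piece $E$ is a strictly convex proper quadratic, so a minimum exists there; by finiteness of chain types a global minimizer $a^*$ exists. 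The KKT conditions at $a^*$ should reproduce the balancing identity and the semistability inequality, playing the role of Lemma \ref{weight_grading_lagr}.

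For uniqueness I would argue by direct comparison. Given two candidates $a, b$, the semistability of $\Lambda(a)$ applied to sub-$\RR$-filtrations of $\Lambda(a)$ built out of $b$, combined with the symmetric inequality for $\Lambda(b)$ built from $a$, should sandwich all relevant numerical quantities and force $a = b$. Alternatively, one could connect $a$ and $b$ by a path in $\mathcal B(L)$ (aided by the local model in Proposition \ref{prc_space_local}) along which $E$ is strictly convex, contradicting the existence of two distinct minima.

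The chief obstacle is that the combinatorial data (the chain type) and the continuous label data interact nonlinearly through the paracomplementedness constraint, so $\mathcal B(L)$ is not smooth: an infinitesimal label variation may only be allowed if the chain is simultaneously modified. Showing that the KKT analysis on each piece implies the full stability inequality over all $x \in \Lambda(a)$, and not just over variations that preserve the chain type, is the main technical work, and should rely on a systematic use of the local description from Proposition \ref{prc_space_local} to move across chain types in a controlled way.
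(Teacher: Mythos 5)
Your proposal takes a genuinely different route from the paper, and the difference is more than cosmetic. The paper minimizes the \emph{mass} $m(\Lambda(a))$ rather than the quadratic energy $E(a)=\sum_\lambda\lambda^2 X([a_-(\lambda),a_+(\lambda)])$. Mass comes with three structural facts that do the heavy lifting: $m(\Lambda(a))\geq X(L)$ with equality exactly when $\Lambda(a)$ is semistable of phase zero (so ``minimum $\Leftrightarrow$ weight filtration'' is built in from \eqref{bps_ineq}); the HN filtration of $\Lambda(a)$ supplies an explicit descent direction at any non-minimum, so that ``local min $\Rightarrow$ global min'' is transparent; and the mass triangle inequality (Theorem~\ref{mass_triangle_ineq}) is precisely what the lower-semicontinuity/compactness argument along the map $\mathrm{cl}$ needs. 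Your energy functional enjoys none of these a priori, so the crucial link — that KKT conditions for $E$ on the piecewise-polyhedral $\mathcal{B}(L)$ reproduce semistability of $\Lambda(a)$ against \emph{all} of its elements, not merely against label variations that preserve the chain type — is exactly what would have to be proved from scratch. You correctly flag this as the main technical work, but deferring it leaves the central step open: in the DAG case the variations in Lemma~\ref{weight_grading_lagr} are the $-1_E$ vectors inside one convex polytope, while in a modular lattice an element $x\in\Lambda(a)$ can intertwine thresholds of $a$ and does not correspond to one affine face of a cell, so the local model in Proposition~\ref{prc_space_local} alone does not close this gap.

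Two of your auxiliary claims are also imprecise in ways that matter. Finite length does not imply finitely many chains (the subspace lattice of $\CC^2$ already has a continuum of maximal chains), so ``finitely many chain types, each a closed convex piece with its own minimizer'' needs reformulating in terms of combinatorial types and still has to cope with families of chains and with the fact that the strict order constraints $\lambda_k<\lambda_{k+1}$ make each piece non-closed. And the paracomplementedness constraint is actually $\lambda_j-\lambda_{i+1}\geq 1$ (not $\lambda_j-\lambda_i\geq 1$), and in raw form is a disjunction whose collapse to a convex condition uses the up/down-closure of the non-complemented set of pairs — a small but genuine argument. Finally, your uniqueness sketch is in the right spirit but skips the crux: the paper has to prove Lemma~\ref{compl_in_union} in order to show that $x_\alpha=a_-(\alpha)\vee b_+(\alpha-\delta)$ really lies in $\Lambda(a)$ (and dually $y_\beta\in\Lambda(b)$) before running the phase computation that derives a contradiction; the alternative ``connect $a$ and $b$ by a path along which $E$ is convex'' is not viable as stated because $\mathcal{B}(L)$ is not convex and there is no canonical convexity-preserving path.
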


We call the paracomplemented $\RR$-filtration in $L$ which is uniquely determined by the theorem the \textbf{weight filtration} in $L$.

The theorem stated in the introduction is the special case where $L$ is the lattice of subobjects of a fixed object $E\in\mc A$ in an artinian abelian category $\mc A$.
Then $K^+(L)$ is the sub-semigroup of $K(\mc A)$ generated by simple objects which appear as sub-quotients of $E$ and we can obtain $X$ as in the theorem above by restriction.
The first condition of the theorem in the introduction is that the filtration is paracomplemented, the second that $\phi(\Lambda(a))=0$, and the third that $\Lambda(a)$ is semistable.

\begin{proof}
\ul{Uniqueness.} Suppose $a$ and $b$ are $\RR$-filtrations in $L$.
Combine these to
\begin{gather}
c_+(\alpha,\beta):=a_+(\alpha)\wedge b_+(\beta) \\
c_-(\alpha,\beta):=a_+(\alpha)\wedge b_+(\beta)\wedge (a_-(\alpha)\vee b_-(\beta)).
\end{gather}
We claim that $a=b$ if and only if $c_+(\alpha,\beta)=c_-(\alpha,\beta)$ for all $\alpha\neq\beta$.
In one direction, if $a=b$, $\alpha<\beta$ say, then
\begin{equation}
a_+(\alpha)\wedge a_+(\beta)\wedge (a_-(\alpha)\vee a_-(\beta))=a_+(\alpha)=a_+(\alpha)\wedge a_+(\beta).
\end{equation}
On the other hand, if $a\neq b$ then there is an $\alpha\in\RR$ with $a_-(\alpha)=b_-(\alpha)$ but $a_+(\alpha)\neq b_+(\alpha)$. 
By symmetry, we may assume that $a_+(\alpha)\nleq b_+(\alpha)$, so there is a $\beta>\alpha$ with
\begin{equation}
a_+(\alpha)\leq b_+(\beta),\qquad a_+(\alpha)\nleq b_-(\beta).
\end{equation}
We have
\begin{equation}
a_+(\alpha)\wedge b_+(\beta)=a_+(\alpha)>a_+(\alpha)\wedge b_-(\beta)=a_+(\alpha)\wedge b_+(\beta)\wedge (a_-(\alpha)\vee b_-(\beta)) 
\end{equation}
thus $c_+(\alpha,\beta)>c_-(\alpha,\beta)$.

Now suppose $a,b\in\mathcal B(L)$ are both semistable of phase $0$, and $a\neq b$ for contradiction.
Then there are $\alpha\neq\beta$ with $c_-(\alpha,\beta)<c_+(\alpha,\beta)$ and we may assume by symmetry that such a pair exists with $\alpha>\beta$.
Let $\delta>0$ be maximal such that there exists an $\alpha$ with with 
\begin{equation}
c_-(\alpha,\alpha-\delta)<c_+(\alpha,\alpha-\delta)
\end{equation} 
i.e. the most off--diagonal.
Such a $\delta$ exists because of finiteness of the filtrations.
We claim that
\begin{equation}
x_\alpha=a_-(\alpha)\vee b_+(\alpha-\delta)
\end{equation}
defines an element of $\Lambda(a)$.
First, by choice of $\delta$, we have $b_+(\alpha-\delta)\leq a_+(\alpha)$ thus $x_\alpha\in[a_-(\alpha),a_+(\alpha)]$.
We need to show that $a_+(\alpha)$ has a complement in $[x_\alpha,x_{\alpha+1}]$.
Consider the following diagrams
\begin{equation}
\begin{tikzpicture}[scale=1,baseline=(current  bounding  box.center)]
\node (1) at (0,.5) {$a_+(\alpha)$};
\node (2) at (1,1.5) {$a_+(\alpha)\vee b_+(\alpha-\delta+1)$};
\node (3) at (1,-.5) {$a_+(\alpha)\wedge b_+(\alpha-\delta+1)\qquad$};
\node (4) at (2,.5) {$b_+(\alpha-\delta+1)$};
\node (5) at (2,-1.5) {$b_+(\alpha-\delta)$};
\node (6) at (3,-.5) {$d$};
\draw (1) edge (2) (3) edge (4) (5) edge (6) (1) edge (3) (3) edge (5) (2) edge (4) (4) edge (6);
\node (7) at (5,0) {$a_+(\alpha)$};
\node (8) at (6,1) {$a_-(\alpha+1)$};
\node (9) at (6,-1) {$a_-(\alpha)\vee b_+(\alpha-\delta)=x_\alpha$};
\node (10) at (7,0) {$e$};
\draw (7) edge (8) (9) edge (10) (7) edge (9) (8) edge (10);
\end{tikzpicture}
\end{equation}
\begin{equation}
\begin{tikzpicture}[scale=1,baseline=(current  bounding  box.center)]
\node (1) at (0,.5) {$a_+(\alpha)$};
\node (2) at (1,1.5) {$a_+(\alpha)\vee b_+(\alpha-\delta+1)$};
\node (3) at (1,-.5) {$a_-(\alpha)\vee b_+(\alpha-\delta)=x_\alpha$};
\node (4) at (2,.5) {$a_-(\alpha)\vee d$};
\draw (1) edge (2) (3) edge (4) (1) edge (3) (2) edge (4);
\end{tikzpicture}
\end{equation}
where existence of complements $e,d$ follows from the assumption that $a,b$ are paracomplemented, and the third is obtained from the first.
Since 
\begin{equation}
a_+(\alpha)\vee a_-(\alpha+1)\vee b_+(\alpha-\delta+1)=x_{\alpha+1}
\end{equation}
the claim follows from Lemma~\ref{compl_in_union}.

In a similar way one shows that
\begin{equation}
y_\beta=a_-(\beta+\delta)\wedge b_+(\beta)
\end{equation}
defines an element $y\in\Lambda(b)$.
We compute
\begin{align}
\mathrm{Im}Z([0,x])&=\sum_\alpha \alpha X([a_-(\alpha),x_\alpha]) \\
&>\sum_\beta \beta X([a_-(\beta+\delta),x_{\beta+\delta}]) \\
&=\sum_\beta \beta X([a_-(\beta+\delta)\wedge b_+(\beta),b_+(\beta)]) \\
&=\sum_\beta \beta (X([b_-(\beta),b_+(\beta)])-X([b_-(\beta),y_\beta])) \\
&=0-\mathrm{Im}Z([0,y])
\end{align}
which implies that at least one of $\mathrm{Im}Z([0,x])$, $\mathrm{Im}Z([0,y])$ is positive.
This contradicts the assumption that both $\Lambda(a)$ and $\Lambda(b)$ are semistable.

\ul{Existence.}
Consider the function
\begin{equation}
m:\mathcal B(L)\to\RR,\qquad a\mapsto m(\Lambda(a))
\end{equation}
sending a paracomplemented $\RR$-filtration, $a$, to the mass of the associated lattice $\Lambda(a)$.
By \eqref{bps_ineq} we have
\begin{equation}
m(\Lambda(a))\geq|Z(\Lambda(a))|\geq \mathrm{Re}Z(\Lambda(a))=X(L)
\end{equation}
with equality if and only if $\Lambda$ is semistable of phase $0$, i.e. the weight filtration.

We claim that if $a\in\mathcal B(L)$ is a local minimum of $m$, then $a$ is a weight filtration, thus a global minimum.
Suppose $x_0<x_1<\ldots<x_n$ is the HN filtration in $\Lambda(a)$, $\phi_k:=\phi([x_{k-1},x_k])$.
We want to show that $n=1$ and $\phi_1=0$ if $a$ is a local minimum.
The idea is to deform $a$ using its HN filtration.
Let $t>0$ and consider the $\RR$-filtration in $\Lambda(a)$ with support
\begin{equation}
-\phi_1t<\ldots<-\phi_nt
\end{equation}
and values $x_0<x_1<\ldots<x_n$.
For sufficiently small $t>0$ this $\RR$-filtration has support in $(-\rho(a),\rho(a))$, so let $a_t\in\mathcal B(L)$ be the corresponding paracomplemented $\RR$-filtration given by Proposition~\ref{prc_space_local}.
We have $a_t\to a$ as $t\to 0$.
The mass of $a_t$ is given by
\begin{gather}
m(\Lambda(a_t))=\sum_{k=1}^n|z_k(t)| \\
z_k(t):=\sum_{\lambda\in\RR}(1+(\lambda-\phi_kt)i)X([x_{k-1,\lambda},x_{k,\lambda}])
\end{gather}
which also gives the mass of $a$ for $t=0$.
Note that $\mathrm{Re}(z_k(t))$ is independent of $t$ and
\begin{equation}
\frac{d\mathrm{Im}(z_k)}{dt}=-\phi_k\sum_{\lambda\in\RR}X([x_{k-1,\lambda},x_{k,\lambda}])
\end{equation}
which has the opposite sign of $\phi_k$, if $\phi_k\neq 0$.
But $\phi_k=\mathrm{Arg}(Z([x_{k-1},x_k]))$ has the same sign as 
\begin{equation}
\mathrm{Im}(z_k(0))=\mathrm{Im}(Z([x_{k-1},x_k]))
\end{equation}
hence $a$ cannot be a local minimum unless $\phi_k=0$ for all $k$, i.e. $n=1$ and $\Lambda(a)$ is semistable of phase $0$.

In preparation for what follows, we want to show that there is a $C>0$ such that
\begin{equation}\label{supp_bound}
\max\{|\lambda|,\lambda\in\mathrm{supp}(a)\}\leq Cm(\Lambda(a))
\end{equation}
for any $a\in\mathcal B(L)$.
The argument is that the cardinality of $\mathrm{supp}(a)$ is bounded above by the length, $n$, of $L$, so if the diameter of $\mathrm{supp}(a)$ becomes larger than $n-1$, then there is a gap of length $>1$ and $\Lambda(a)$ splits as a product corresponding to points on the left and right of the gap.
Thus, if the left hand side of \eqref{supp_bound} is larger than $n-1$, then there must be a factor of $\Lambda(a)$ (possibly everything) supported entirely on one side of $0\in\RR$.
The mass of this factor is bounded above by $X_{\mathrm{min}}$ times the distance of its support to $0$, where $X_{\mathrm{min}}$ is the minimum of $X$ on $K^+(L)$.
Note also that since $m$ is bounded below by a positive constant, any additive constant in the estimate can be absorbed into $C$.

Now recall from \eqref{prc_cl} that there is continuous map
\begin{equation}
\mathrm{cl}:\mathcal B(L)\to C_0(\RR;K(L)),\qquad a\mapsto \sum_{\lambda\in\RR}\ol{[a_-(\lambda),a_+(\lambda)]}\lambda
\end{equation}
whose image is contained in the homology class in $H_0(\RR;K(L))=K(L)$ given by $[L]$. 
By \eqref{supp_bound} the infimum of $m:\mathcal B(L)\to\RR$ stays the same if we restrict to a subset $V\subset\mathcal B(L)$ given by $\RR$-filtrations supported in $[-M,M]$ for some sufficiently large $M\gg 0$.
The image of $V$ under $\mathrm{cl}$ is contained in the set $W$ of $0$-chains supported in $[-M,M]$, with coefficients in $K^+(L)$, and with class $[L]\in H_0(\RR;K(L))$, which is compact.
In fact, $\mathrm{cl}(V)\subset W$ is closed, hence compact.
To see this, suppose $x_n\in V$ with $\mathrm{cl}(x_n)\to y\in W$.
If $Y=\mathrm{supp}(y)$ let 
\begin{equation}\label{rad2}
\kappa=\frac{1}{2}\min\{|\lambda-\mu|\mid \lambda,\mu\in Y\}\cup\{1-|\lambda-\mu|\mid \lambda,\mu\in Y, |\lambda-\mu|<1\}>0
\end{equation}
There is some $N$ such that every point in $\mathrm{supp}(x_N)$ has distance less than $\kappa$ from $\mathrm{supp}(y)$.
From $x_N$ we get a coarser $\RR$-filtration $x$ with support $Y$ and values
\begin{equation}
x_\pm(\lambda\pm\kappa)=x_{N,\pm}(\lambda\pm\kappa),\qquad\lambda\in Y
\end{equation}
which is paracomplemented by definition of $\kappa$ and satisfies $\mathrm{cl}(x)=y$.

We claim that $m$ takes only finitely many values on each fiber of $\mathrm{cl}$.
Indeed, if $a_0<\ldots<a_n$ is the HN filtration of $a\in\mathcal B(L)$, then $m(\Lambda(a))$ only depends on the partition 
\begin{equation}
\mathrm{cl}(a)=\ol{[a_0,a_1]}+\ldots+\ol{[a_{n-1},a_n]}
\end{equation}
of $\mathrm{cl}(a)$ into $0$-chains with positive coefficients, and there are only finitely many such partitions.
Taking fiberwise minimum of $m$ gives a function
\begin{equation}
f:\mathrm{cl}(V)\to\RR,\qquad f(x)=\min\{m(a)\mid \mathrm{cl}(a)=x\}.
\end{equation}
Since it has already established that $\mathrm{cl}(V)$ is compact, we can conclude that $m$ has a global minimum, and thus the existence of a weight filtration, if we show that $f$ is lower semicontinuous.

Let $x\in\mathrm{cl}(V)$, then $\rho=\rho(a)$ is the same for all $a$ with $\mathrm{cl}(a)=x$, since it only depends on the support.
After possibly shrinking $\rho$ we also have $\rho\leq \kappa(x)$, where $\kappa(x)=\kappa$ is defined as in \eqref{rad2} with $Y=\mathrm{supp}(x)$.
Let $O_\rho$ be the neighborhood of $x$ consisting of $0$-chains which differ from $x$ by a $1$-chain with support in a $\rho$-neighborhood of $\mathrm{supp}(x)$.
This is in complete analogy with the definition of $U_\rho(a)$ for $a\in\mathcal B(L)$, and we get
\begin{equation}
\mathrm{cl}^{-1}(O_\rho)=\bigcup_{\mathrm{cl}(a)=x}U_\rho(a)
\end{equation}
where the inclusion $\supseteq$ is clear and the inclusion $\subseteq$ follows from $\rho\leq\kappa$ by the same argument which showed that $\mathrm{cl}(V)$ is closed.

Suppose $b\in U_\rho(a)$ corresponds to an $\RR$-filtration $w$, then by the triangle inequality for mass, Theorem~\ref{mass_triangle_ineq}, and \eqref{nearby_lattice_split} we get
\begin{equation}\label{ma_upper_bound}
m(a)\leq \sum_{\lambda\in\RR}m([w_-(\lambda),w_+(\lambda)]).
\end{equation}
Let $c_{\lambda,0}<\ldots<c_{\lambda,n_\lambda}$ be the HN filtration in $[w_-(\lambda),w_+(\lambda)]$, then 
\begin{equation}
m([w_-(\lambda),w_+(\lambda)]) = \sum_{k=1}^{n_\lambda}\left|\sum_{\mu\in\RR}(1+\mu i)X([c_{\lambda,k-1,\mu},c_{\lambda,k,\mu}])\right|
\end{equation}
where $[w_-(\lambda),w_+(\lambda)]$ gets its polarization from $\Lambda(a)$ and
\begin{equation}\label{mb_actual}
m(b)=\sum_{\lambda\in\RR}\sum_{k=1}^{n_\lambda}\left|\sum_{\mu\in\RR}(1+(\lambda+\mu)i)X([c_{\lambda,k-1,\mu},c_{\lambda,k,\mu}])\right|.
\end{equation}
The difference between the right hand side of \eqref{ma_upper_bound} and \eqref{mb_actual} can be made smaller than some given $\varepsilon$ by suitable choice of $\rho$, which does not depend on the particular $a$ or $b$ but only a partition of $x$, of which there are finitely many.
This shows that $f$ is lower semicontinuous.
\end{proof}

Besides the weight filtration, any artinian lattice has two other canonically defined filtrations $0=a_0<a_1<\ldots<a_n=1$ such that the intervals $[a_{k-1},a_k]$ are complemented lattices.
The \textit{socle filtration} is defined inductively by the property that $a_k\in [a_{k-1},a_n]$ is maximal such that $[a_{k-1},a_k]$ is complemented. 
Dually, the \textit{cosocle filtration} is defined inductively by the property that $a_{k-1}\in [a_0,a_k]$ is minimal such that $[a_{k-1},a_k]$ is complemented.
Both are examples of a \textit{Loewy filtration}: A filtration of minimal length such that $[a_{k-1},a_k]$ are complemented lattices. 
These filtrations are typically considered in the context of representations of finite--dimensional algebras, see for example \cite{assem_et_al06}.

\subsection{Iterated weight filtration}
\label{sec_iterated}

If $(L,Z)$ is a semistable polarized lattice, then we can consider the subset $L'\subset L$ given by
\begin{equation}\label{semistable_sublattice}
L'=\{x\in L\mid x=0\text{ or }\phi([0,x])=\phi(L)\}
\end{equation}
which is a sublattice, hence artinian and there is a homomorphism 
\begin{equation}
X:K(L')^+\to\RR_{>0},\qquad X([x,y])=e^{-i\phi(L)}Z([x,y]).
\end{equation}
Moreover, $L'$ has strictly smaller length than $L$, unless the image of $Z$ is contained in a single ray.
If $L'$ is complemented, then $L$ is called \textit{polystable}.

We apply the above to the following situation.
Suppose $L$ is an artinian lattice with homomorphism $X:K^+(L)\to\RR_{>0}$ and let $a\in\mathcal B(L)$ be the weight filtration in $L$.
By definition, $\Lambda(a)$ is semistable, so we can consider $L^{(2)}=\Lambda(a)'$ which has a weight filtration $b$.
The filtration $b$ gives a filtration in $\Lambda(a)\supset L^{(2)}$, hence a refinement of $a$ to an $\RR^2$-filtration $a^{(2)}$ with
\begin{equation}
a^{(2)}_\pm(\lambda_1,\lambda_2)=b_\pm(\lambda_2)_{\lambda_1}\in [a_-(\lambda_1),a_+(\lambda_1)]
\end{equation}
where $\RR^2$ is given the lexicographical order.
By induction we get lattices $L^{(n)}$ and $\RR^n$-filtration $a^{(n)}$.
The lengths of $L^{(n)}$ are strictly decreasing until some $L^{(N+1)}$ is complemented and thus its weight filtration trivial, so the process stops after finitely many steps.
This shows that there is a canonical $\RR^\infty$-filtration in $L$, the \textbf{iterated weight filtration}, defined to be $a^{(N)}$.
We refer to $N$ as the \textbf{depth} of the iterated weight filtration.

\begin{figure}
\centering
\begin{tikzcd}
& & \bullet \arrow{drrr} & & & & & \\
& & & & & \bullet & & 
\end{tikzcd}

\vspace{\baselineskip}

\begin{tikzcd}
& & \bullet \arrow{dl}\arrow{drrr} & & & & \bullet \arrow{dl}  & \\
& \bullet & & & & \bullet & &
\end{tikzcd}

\vspace{\baselineskip}

\begin{tikzcd}
\bullet \arrow{dr} & & \bullet \arrow{dl}\arrow{dr}\arrow{drrr} & & \bullet \arrow{dr} & & \bullet \arrow{dl}\arrow{dr} \\
& \bullet & & \bullet & & \bullet & & \bullet
\end{tikzcd}

\vspace{\baselineskip}

\begin{tikzcd}
\bullet \arrow{d}\arrow{dr} & \bullet \arrow{d} & \bullet \arrow{dl}\arrow{d}\arrow{dr}\arrow{drrr} & \bullet \arrow{d} & \bullet \arrow{d}\arrow{dr} & \bullet \arrow{d} & \bullet \arrow{dl}\arrow{d}\arrow{dr} & \bullet \arrow{d} \\
\bullet & \bullet & \bullet & \bullet & \bullet & \bullet & \bullet & \bullet
\end{tikzcd}
\caption{The graphs $G^{(1)}$, $G^{(2)}$, $G^{(3)}$, $G^{(4)}$.}
\label{fig_itgraphs}
\end{figure}
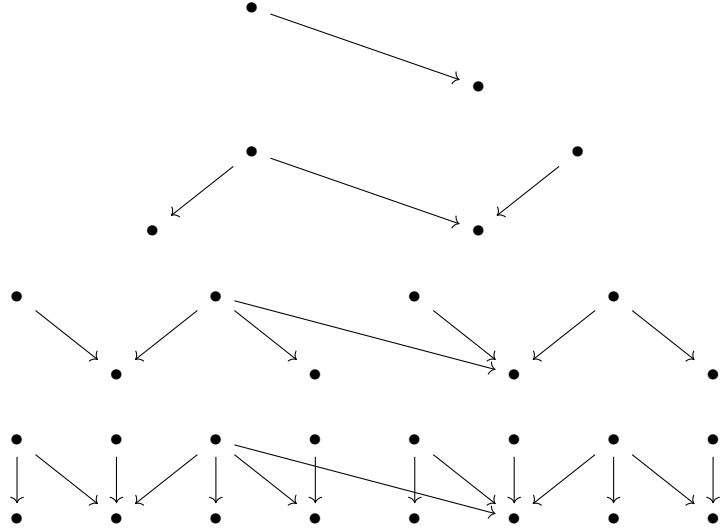

We will construct a series of examples generalizing the one in Section~\ref{sec_example} to show that the depth can be any non-negative integer.
The lattices will be obtained as lattices of closed subgraphs of oriented trees with the canonical homomorphism $X:K^+(L)\to\RR_{>0}$ given by the length of an interval.

Let $G^{(0)}$ be the graph with a single vertex and no edges and $G^{(1)}$ be the directed graph with two vertices and a single arrow between them.
Inductively define $G^{(n+1)}$ to be the directed graph obtained from $G^{(n)}$ by adding an outgoing arrow from each source to a new vertex and an incoming arrow to each sink starting at a new vertex. 
More formally, define vertices $G^{(n+1)}_0=G^{(n)}\times\{0,1\}$ and arrows $G^{(n+1)}_1$ to include $(i,0)\to (i,1)$ for each $i\in G^{(n)}_0$ and $(i,0)\to (j,1)$ for each arrow $i\to j$ in $G^{(n)}_1$
(see Figure~\ref{fig_itgraphs}).

The weight grading on $G^{(n)}$ is just $v_i=\frac{1}{2}$ if $i$ is a source and $v_i=-\frac{1}{2}$ if $i$ is a sink.
This follows from Lemma~\ref{weight_grading_lagr} with Lagrange multipliers $u_\alpha=\frac{1}{2}$ if $\alpha$ is a new arrow in $G^{(n)}$ and $u_\alpha=0$ otherwise.
To compute the iterated weight filtration we should next look at the lattice $L^{(2)}$ of closed subgraphs of $G^{(n)}$ such that the sum of $v_i$ is zero, i.e. which include an equal number of sinks and sources. 
It is easy to see that this coincides with the lattice of closed subgraphs of $G^{(n-1)}$. 
This shows that the iterated weight filtration on the lattice of closed subgraphs of $G^{(n)}$ has depth $n$.

\section{Gradient flow on quiver representations}
\label{sec_flow}

The purpose of this section is to show that the iterated weight filtration has a dynamical interpretation, describing the asymptotics of certain gradient flows which appear in the study of quiver representations.
We start by providing background on the K\"ahler geometry of spaces of quiver representations in the first subsection. 
Subection~\ref{sec_staralg} gives an alternative description of the flow in the language of $*$-algebras and $*$-bimodules, which is more invariant and simplifies formulas.
General properties of the flow are discussed in Subsection~\ref{sec_monotonicity}.
The final subsection completes the proof of our second main theorem by giving a construction of asymptotic solutions.

\subsection{K\"ahler geometry of quiver representations}
\label{sec_kahler}

Many problems in linear algebra are instances of the following general one.
Given a quiver
\begin{equation}
\begin{tikzcd}
 & Q_1 \arrow{dl}[swap]{s} \arrow{dr}{t} & \\
Q_0 & & Q_0 \\
\end{tikzcd}
\end{equation}
where $Q_0$ is the set of vertices, $Q_1$ the set of arrows, and $s$ and $t$ assign to each arrow its starting and target vertex, classify all the ways in which such a diagram can be realized (represented) using finite-dimensional vector spaces and linear maps.
The space of representations for fixed vector spaces $E_i$, $i\in Q_0$, is a quotient
\begin{equation}
\bigoplus_{\alpha:i\to j}\mathrm{Hom}(E_i,E_j){\bigg{/}}\prod_{i\in Q_0}GL(E_i)=:V/G
\end{equation}
of a vector space by a reductive group.

If the ground field is $\CC$ then $V/G$ is approximated by a K\"ahler manifold.
To construct it, choose a Hermitian metric on each $E_i$, then the norm-squared
\begin{equation}
S(\phi)=\sum_{\alpha:i\to j} \tr\left(\phi_\alpha^*\phi_\alpha\right)
\end{equation}
where $\phi_\alpha\in \mathrm{Hom}(E_i,E_j)$, is a K\"ahler potential for the flat metric on $V$.
We can look for points in $V$ which minimize $S$ on a given $G$-orbit. 
These are representations with
\begin{equation}
\sum_{\alpha:i\to j}[\phi_\alpha^*,\phi_\alpha]=0.
\end{equation}
Such a minimum can be found if and only if the $G$-orbit corresponds to a semisimple representation. 
This is an application of the Kempf--Ness theorem.
A K\"ahler manifold is then obtained as the quotient of the set of minimizers by the unitary subgroup $K\subset G$ preserving the metric on each $E_i$, with the potential which is the restriction of $S$.

If $Q$ has no oriented cycles then the only semisimple representations are those with $\phi=0$.
Following ideas from geometric invariant theory A.~King~\cite{king94} shows how to obtain non-trivial spaces by generalizing the above construction. 
They depend on a choice of \textit{polarization}, which is in this context just a real number $\theta_i\in\RR$ for each vertex $i\in Q_0$.
They allow us to extend the action of $G$ to $V\times\CC$ by letting $g\in G$ act on $z\in\CC$ by multiplication with
\begin{equation}
\prod_{i\in G_0}\left(\det g_i\right)^{\theta_i}.
\end{equation}
(Strictly speaking, this is ill-defined if $\theta_i$ are not integers and we should work with virtual line bundles.)  
On $V\times \CC^*$ consider the potential
\begin{equation}
S(\phi,z)=\sum_{\alpha\in Q_1} \tr\left(\phi_\alpha^*\phi_\alpha\right)+\log|z|.
\end{equation}
Fixing $\phi\in V$, we can consider the ($K$-invariant) restriction of $S$ to the orbit $G(\phi,1)$ as a function on the homogeneous space $G/K$. 
A point in $G/K$ corresponds to a choice of positive definite Hermitian endomorphism $h_i$ on each $E_i$, and
\begin{equation}
S(h)=\sum_{\alpha:i\to j} \tr\left(h_i^{-1}\phi_\alpha^*h_j\phi_\alpha\right)+\sum_{i\in Q_0}\theta_i\log\det h_i.
\end{equation}
The equation for $h\in G/K$ to be a critical point of $S$ is
\begin{equation}\label{rep_harmonic_metric}
\sum_{\alpha:i\to j}[h_i^{-1}\phi_\alpha^*h_j,\phi_\alpha]=\sum_{i\in Q_0}\theta_i\mathrm{pr}_{E_i}.
\end{equation}

To describe those representations for which the above equation has a solution, we need to recall some terminology.
For any representation $E$ of $Q$ we define
\begin{equation}
\theta(E)=\sum_{i\in Q_0}\theta_i\dim E_i
\end{equation}
and say that $E$ is \textbf{semistable} if $\theta(E)=0$ and any subrepresentation $F\subset E$ satisfies $\theta(F)\leq 0$.
If in addition $\theta(F)<0$ whenever $0\neq F\subsetneq E$, then $E$ is called \textbf{stable}.
Finally, $E$ is \textbf{polystable} if it is a direct sum of stable representations.
Note that for $\theta=0$ all representations are semistable and polystable$=$semisimple.

\begin{thm}[King]\label{king_thm}
$S$ is bounded below on the $G$-orbit through $(\phi,z)\in V\times\CC$ if and only if $\phi$ defines a semistable representation.
Moreover, there is a solution to \eqref{rep_harmonic_metric}, i.e. a minimum point of $S$, if and only if  the representation is polystable. 
\end{thm}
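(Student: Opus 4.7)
The plan is to adapt the Hilbert--Mumford criterion and Kempf--Ness theorem to this K\"ahler quotient setting. First we reformulate the question as a convex-analytic problem on the symmetric space $G/K$: via the parametrization $g\mapsto h=g^*g$ we identify $S$ restricted to the $G$-orbit of $(\phi,1)$ with the function of $h$ displayed just before \eqref{rep_harmonic_metric}. Since $G/K$ has non-positive curvature and each summand of $S$ is geodesically convex, the usual dichotomy applies: either $S$ is proper on $G/K$ (and attains its minimum), or it escapes to $-\infty$ along some geodesic ray.

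The next step is the asymptotic analysis of $S$ along a geodesic $h(t)=\exp(tX)$ with $X$ Hermitian on $\bigoplus_i E_i$. Decomposing each $E_i$ into $X_i$-eigenspaces with eigenvalues $\mu_{i,k}$, and writing $\phi_\alpha^{(k,l)}$ for the block of $\phi_\alpha$ from the $\mu_{i,l}$-eigenspace of $E_i$ into the $\mu_{j,k}$-eigenspace of $E_j$, we compute
\[
S(h(t))=\sum_{\alpha:i\to j}\sum_{k,l}|\phi_\alpha^{(k,l)}|^2\,e^{t(\mu_{j,k}-\mu_{i,l})}+t\sum_{i\in Q_0}\theta_i\tr X_i.
\]
Boundedness below as $t\to+\infty$ demands either that some positive exponential appear (in which case $S\to+\infty$) or, in its absence, that $\sum_i\theta_i\tr X_i\geq 0$. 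The ``easy'' direction of the first statement is then immediate: testing with $X_i=-P_{F_i}$ for an arbitrary subrepresentation $F\subseteq E$, the vanishing of all positive exponentials becomes precisely the condition that $\phi$ preserves $F$, so boundedness below forces $\theta(F)\leq 0$; testing with $X=\pm\mathrm{Id}$ on $E$ gives $\theta(E)=0$.

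The main obstacle is the converse direction, that semistability implies $S$ is bounded below. Here we intend to apply Kempf's theorem in its analytic form: if $S$ were unbounded below there would exist a \emph{rational} Hermitian direction $X$ along which $S\to-\infty$, and after clearing denominators one may take $X=-P_F$ for a $\phi$-invariant subspace $F\subset E$; this forces $\theta(F)>0$ on a genuine subrepresentation, contradicting semistability. A more self-contained alternative would be to induct on $\dim E$ using the Harder--Narasimhan filtration of $E$ in the lattice of subrepresentations (cf.\ Section~\ref{sec_hn_chain}), handling the maximal destabilizing subobject and the semistable quotient separately.

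For the second statement: if $E=\bigoplus_\sigma E_\sigma$ is polystable with stable summands $E_\sigma$, the strict inequality $\theta(F)<0$ on proper subrepresentations of each $E_\sigma$ promotes ``bounded below'' to ``proper'' on $G_\sigma/K_\sigma$ (every unbounded geodesic ray yields either a positive exponential or strictly positive linear drift), so by strict convexity $S$ attains a minimum on each factor, and assembling block-diagonally yields a solution of \eqref{rep_harmonic_metric}. Conversely, at any critical point $h$ the complex stabilizer of $h$ in $G$ acts reductively on $E$; a standard argument shows each of its isotypic components is itself semistable with a critical metric, so induction on $\dim E$ decomposes $E$ into an orthogonal direct sum of stables, i.e.\ $E$ is polystable.
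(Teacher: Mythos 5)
This theorem appears in the paper only as a citation to King's 1994 paper; the authors do not reprove it, so there is no in-paper proof to compare against. Your sketch reconstructs, in outline, the Kempf--Ness/Hilbert--Mumford approach that King actually uses: the computation of $S$ along a geodesic $h(t)=\exp(tX)$ is correct, and the ``easy'' direction (testing $X=-P_F$ to extract $\theta(F)\leq 0$ for $\phi$-invariant $F$, and $X=\pm\mathrm{Id}$ to get $\theta(E)=0$) is fine.

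There is, however, a real imprecision in the opening dichotomy: ``either $S$ is proper on $G/K$ (and attains its minimum), or it escapes to $-\infty$ along some geodesic ray'' is not the right statement. For a convex function on a Hadamard space the relevant fact is that $\inf S=-\infty$ if and only if $S\to-\infty$ along some ray (and even this is a nontrivial lemma; it is the analytic heart of Kempf--Ness), but boundedness below does not give properness, and the infimum need not be attained. Indeed a representation that is semistable but not polystable is exactly the case where $S$ is bounded below yet has no critical point --- this distinction is what separates the two assertions of King's theorem, so the dichotomy as you stated it conflates the two things the theorem is trying to distinguish. Your later paragraphs do treat ``bounded below'' and ``attains a minimum'' separately, so this is more a slip of phrasing than a fatal error, but it is worth correcting. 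The other gaps you acknowledge --- appealing to Kempf's rationality theorem for the worst destabilizing one-parameter subgroup, or alternatively an induction via the HN filtration, and the reduction from a destabilizing filtration to a single destabilizing subobject via Abel summation on the weights --- are precisely the hard steps of the converse; both routes are viable but would need to be filled in to have a complete proof.
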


The set of polystable representations (up to isomorphism) thus has the structure of a K\"ahler manifold. (More precisely a stratified K\"ahler manifold, see \cite{zbMATH00038411}.)

From a dynamical point of view, polystability means that the gradient flow of $S$ on $G/K$ has the simplest possible asymptotics: exponentially fast convergence to a fixed point, which is a solution of \eqref{rep_harmonic_metric}.
One can study the asymptotic behavior of the flow for non-polystable representations and see if this yields more information about $V/G$. 

To define a gradient of $S$ we need to choose a Riemannian metric on $G/K$.
We consider metrics of the form
\begin{equation}
\langle v,w\rangle=\sum_{i\in Q_0}m_i\mathrm{tr}\left(h_i^{-1}vh_i^{-1}w\right), \qquad v,w\in T_h(G/K)
\end{equation}
where $m_i>0$, $i\in Q_0$, are some fixed positive numbers.
The negative gradient flow is then 
\begin{equation}\label{S_gradient_flow_long}
m_ih_i^{-1}\frac{dh_i}{dt}=\sum_{\alpha:i\to j}h_i^{-1}\phi_\alpha^*h_j\phi_\alpha-\sum_{\alpha:j\to i}\phi_\alpha h_j^{-1}\phi_\alpha^*h_i-\theta_i.
\end{equation}
We will show in this section that in the semistable case the asymptotics of this flow are completely described by the iterated weight filtration.
More precisely, on the $E_\lambda$ piece of the filtration, $\lambda=(\lambda_1,\ldots,\lambda_n)$, we have
\begin{equation}
\log(|h(t)|)=\lambda_1\log t+\lambda_2\log\log t+\ldots+\lambda_n\log^{(n)}t+O(1).
\end{equation}

\subsection{Star-algebras and bimodules}
\label{sec_staralg}

In order to simplify formulas like \eqref{S_gradient_flow_long} and all calculations below, it is useful to adopt the more invariant language of $*$-algebras and $*$-bimodules.
This offers perhaps also a more algebraic point of view on the K\"ahler geometry discussed in the previous subsection.
To motivate the general definitions below, we first describe the structure in the case of quiver representations.

To begin, note that
\begin{equation}
B:=\prod_{i\in Q_0}\mathrm{End}(E_i)
\end{equation}
is a finite-dimensional $C^*$-algebra, with $*$-structure determined by the choice of metrics on the vector spaces $E_i$.
It follows from the classification of type I factors, or more directly using the Artin--Wedderburn theorem, that every finite-dimensional $C^*$-algebra is of this form.
Finite-dimensional $C^*$-algebras are also precisely those $*$-algebras which have a faithful finite-dimensional $*$-representation on an inner product space.
Recall that a $*$-algebra over $\CC$ is a $\CC$-algebra, $A$, together with a map $A\to A$, $a\mapsto a^*$ such that
\begin{equation}
a^{**}=a,\qquad (a+b)^*=a^*+b^*,\qquad (\lambda a)^*=\overline{\lambda}a^*,\qquad (ab)^*=b^*a^*
\end{equation}
for $a,b\in A$, $\lambda\in\CC$.

The masses $m_i>0$, $i\in Q_0$, determine a positive trace
\begin{equation}
\tau:B\to\mathbb C,\qquad b\mapsto\sum_{i\in Q_0}m_i\mathrm{tr}(b_i).
\end{equation}
Functionals $\tau$ obtained in such a way are characterized by the properties
\begin{equation}
\tau(a^*)=\overline{\tau(a)},\qquad \tau(ab)=\tau(ba),\qquad \tau(aa^*)>0 \text{ for } a\neq 0
\end{equation}
which imply that
\begin{equation}
\langle a,b\rangle:=\tau(a^*b)
\end{equation}
defines a Hermitian inner product on $A$.

The space of representations
\begin{equation}
M:=\bigoplus_{\alpha:i\to j}\mathrm{Hom}(E_i,E_j)
\end{equation}
has the structure of a $B$--$B$ bimodule.
Additionally, there are two $B$-valued inner products
\begin{gather}
(\phi,\psi)\mapsto \phi\psi^*=\sum_{\alpha:i\to j}\frac{1}{m_j}\phi_\alpha\psi_\alpha^* \\
(\phi,\psi)\mapsto \phi^*\psi=\sum_{\alpha:i\to j}\frac{1}{m_i}\phi_\alpha^*\psi_\alpha
\end{gather}
where the normalization is chosen so that
\begin{equation}
\tau(\phi\psi^*)=\tau(\phi^*\psi).
\end{equation}

More generally, suppose $A,B$ are finite dimensional $C^*$-algebras with trace. 
If $M$ is an $A$--$B$ bimodule, then $\overline{M}$ is the complex conjugate vector space with identity map $M\to\overline{M}$, $m\mapsto m^*$ and $B$--$A$ bimodule structure given by
\begin{equation}
bm^*a:=(a^*mb^*)^*
\end{equation}
for $a\in A$, $b\in B$, $m\in M$. 
We say $M$ is a \textbf{$*$-bimodule} if it is equipped with homomorphisms of bimodules
\begin{gather}
M\otimes_B \overline{M}\to A,\qquad m\otimes n^*\mapsto mn^* \\
\overline{M}\otimes_A M\to B,\qquad m^*\otimes n\mapsto m^*n
\end{gather}
which are algebra-valued inner products on $M$ in the sense that
\begin{gather}
(mn^*)^*=nm^*,\qquad mm^*\geq 0, \qquad mm^*=0\implies m=0 \\
(m^*n)^*=n^*m,\qquad m^*m\geq 0, \qquad m^*m=0\implies m=0
\end{gather}
and are related by
\begin{equation}\label{bimodule_trace_rel}
\tau_A(mn^*)=\tau_B(n^*m).
\end{equation}
Caution: In general one has $(ab^*)c\neq a(b^*c)$ for $a,b,c\in M$.

The finite-dimensional $B$--$B$ $*$-bimodules are, up to isomorphism, all obtained as above from quivers.
The following table summarizes our setup and the dictionary between the two languages.

\begin{center}
\begin{tabular}{|c|c|c|}
\hline
notation & type & in terms of quiver $Q$ \\
\hline
\hline
$B$ & finite-dim. $C^*$-algebra & $\bigoplus_{i\in Q_0}\mathrm{End}(E_i)$ \\
\hline
$\tau$ & trace $B\to\CC$ & $\sum_{i\in Q_0}m_i\tr(b_i)$, $m_i>0$ \\
\hline
$\rho$ & $\rho\in\mathrm{center}(B)$, $\rho=\rho^*$ & $\rho_i=\theta_i/m_i\in\RR$, $i\in Q_0$ \\
\hline
$M$ & $B$--$B$ $*$-bimodule & $\bigoplus_{\alpha:i\to j}\mathrm{Hom}(E_i,E_j)$ \\
\hline
$\phi$ & element of $M$ & $\phi_\alpha:E_i\to E_j$, $\alpha:i\to j$ \\
\hline
\end{tabular}
\end{center}

For example the equation \eqref{S_gradient_flow_long} for the flow now takes the form
\begin{equation}\label{gradflow}
h^{-1}\frac{dh}{dt}=[h^{-1}\phi^* h,\phi]-\rho
\end{equation}
where $h\in B$ moves in the cone 
\begin{equation}
\mathcal P:=\{h\in B\mid h^*=h, \mathrm{Spec}(h)\in(0,\infty)\}\subset B
\end{equation}
of self-adjoint operators with strictly positive spectrum (which was written as $G/K$ before).

For the remainder of this subsection we show how to obtain from a triple $(B,\tau,M)$ a new one $(B',\tau',M')$ by deforming (``twisting'') along an element $\phi\in M$.

\begin{lem}\label{diff_adjoint}
Let $\phi\in M$, then the adjoint of $[\phi,\_]:B\to M$ is $[\phi^*,\_]:M\to B$
\end{lem}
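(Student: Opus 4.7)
The plan is to verify the adjoint relation
\begin{equation*}
\langle [\phi,b], m\rangle_M = \langle b, [\phi^*,m]\rangle_B
\end{equation*}
for all $b\in B$, $m\in M$ by direct computation, using $\langle a,b\rangle_B=\tau(a^*b)$ and $\langle m,n\rangle_M=\tau(m^*n)$. I would split the commutator as $[\phi,\_\,]=L_\phi-R_\phi$, where $L_\phi,R_\phi\colon B\to M$ denote left and right multiplication by $\phi$, then handle the two pieces separately and reassemble.

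For $L_\phi$, the $*$-bimodule structure on $\overline{M}$ (defined by $bm^*a=(a^*mb^*)^*$) gives $(\phi b)^*=b^*\phi^*$, and therefore
\begin{equation*}
\langle L_\phi b,m\rangle_M = \tau(b^*\phi^* m) = \langle b,\phi^*m\rangle_B,
\end{equation*}
so $L_\phi^*m=\phi^*m$. For $R_\phi$, analogously $(b\phi)^*=\phi^*b^*$ and
\begin{equation*}
\langle R_\phi b,m\rangle_M = \tau(\phi^* b^* m) = \tau(b^* m\phi^*) = \langle b,m\phi^*\rangle_B,
\end{equation*}
where the middle step is the compatibility \eqref{bimodule_trace_rel} applied to the pair $b^*m,\phi\in M$: $\tau_B((b^*m)^{*\!*}\phi^*)$ rewritten as a cyclic permutation via $\tau_A(xy^*)=\tau_B(y^*x)$. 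Hence $R_\phi^*m=m\phi^*$, and combining shows that the adjoint of $[\phi,\_\,]=L_\phi-R_\phi$ is $m\mapsto \phi^*m-m\phi^*$, which is exactly $[\phi^*,m]$.

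The only point needing vigilance is the caution that $(ab^*)c\neq a(b^*c)$ for $a,b,c\in M$. Every product appearing above is of the form $B\cdot \overline{M}\cdot M$ or $\overline{M}\cdot B\cdot M$ or $B\cdot M\cdot \overline{M}$, never $M\cdot \overline{M}\cdot M$, so ordinary bimodule associativity suffices and no ambiguity arises. I expect this bookkeeping, rather than any deep step, to be the main (very mild) obstacle; everything else is a two-line trace manipulation.
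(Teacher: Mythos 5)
Your proof is correct and matches the paper's argument in all essentials: both reduce to the identity $\tau(\phi^*b^*m)=\tau(b^*m\phi^*)$, which is exactly the trace compatibility \eqref{bimodule_trace_rel} applied to $b^*m$ and $\phi$. The only cosmetic difference is that you split the commutator into $L_\phi-R_\phi$ before taking adjoints, whereas the paper computes $[\phi,b]^*=[b^*,\phi^*]$ in one step; your extra remark about avoiding the $M\cdot\overline{M}\cdot M$ associativity pitfall is a sound (if implicit in the paper) sanity check.
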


\begin{proof}
\begin{align}
\langle[\phi,b],m\rangle &= \tau([b^*,\phi^*]m) \\
                         &= \tau(b^*\phi^*m-\phi^*b^*m) \\
                         &= \tau(b^*[\phi^*,m]) \\
                         &= \langle b,[\phi^*,m]\rangle
\end{align}
Note the use of \eqref{bimodule_trace_rel}.
\end{proof}

The kernel of $[\phi,\_]$ is a subalgebra in general, but it need not be closed under the operation $*$.
The following proposition states that, under the condition of centrality of $[\phi^*,\phi]$, passing to the ``harmonic part'' of the complex $B\to M$ produces another pair $(B',M')$ of the same sort.

\begin{prop}\label{harm_part_prop}
Suppose that 
\begin{equation}
[\phi^*,\phi]\in\mathrm{center}(B)
\end{equation}
and let
\begin{equation}
B'=\{b\in B\mid [\phi,b]=0\},\qquad M'=\{m\in M\mid [\phi^*,m]=0\}
\end{equation}
then $B'$ is a $*$-subalgebra of $B$ and $M'$ a $B'$--$B'$ $*$-bimodule with $B'$-valued inner products given by composition of those of $M$ with the orthogonal projection $B\to B'$.
\end{prop}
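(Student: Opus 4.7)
My plan is to view $d := [\phi,\,\cdot\,] \colon B \to M$ as a first-order ``differential'' whose formal adjoint, by Lemma~\ref{diff_adjoint}, is $d^* := [\phi^*,\,\cdot\,] \colon M \to B$. Then $B' = \ker d$ and $M' = \ker d^*$ are the ``harmonic'' parts of the associated Laplacians $\Delta_B = d^*d \colon B \to B$ and $\Delta_M = dd^* \colon M \to M$, with $\ker \Delta_B = \ker d = B'$ via the standard identity $\langle \Delta_B b, b \rangle = \|db\|^2$. Three items must be established: (i) $B'$ is closed under multiplication and under $*$; (ii) $M'$ is closed under left and right $B'$-actions; (iii) the projected pairings $\pi \circ (\,\cdot\,\,\cdot\,^*)$ and $\pi \circ (\,\cdot\,^*\,\cdot\,)$ make $M'$ into a $B'$--$B'$ $*$-bimodule, where $\pi \colon B \to B'$ is orthogonal projection with respect to $\tau$.

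Closure of $B'$ under multiplication is automatic from the Leibniz rule $[\phi, ab] = [\phi, a]b + a[\phi, b]$. The key claim, and the only place where the centrality hypothesis enters, is closure under $*$. The strategy is to show $\Delta_B$ is $*$-equivariant, i.e.\ $\Delta_B(b^*) = \Delta_B(b)^*$, from which $*$-closure of $\ker \Delta_B = B'$ follows. Setting $P = \phi^*\phi$ and $Q = \phi\phi^* \in B$, one expands
$$\Delta_B(b) = Pb - \phi^*(b\phi) - (\phi b)\phi^* + bQ$$
using the associativity identities that are genuinely available for the algebra-valued inner products ($\phi^*(\phi b) = (\phi^*\phi)b$ and $(b\phi)\phi^* = b(\phi\phi^*)$, both instances of $B$-bilinearity). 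The two mixed terms $\phi^*(b\phi)$ and $(\phi b)\phi^*$ are nonassociative in the sense of the Caution, but they are manifestly $*$-equivariant in $b$ (using $(mn^*)^* = nm^*$); hence they cancel in the difference $\Delta_B(b^*) - \Delta_B(b)^*$, which collapses to $[P - Q, b^*] = [[\phi^*, \phi], b^*]$. By hypothesis this commutator vanishes, completing (i).

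For (ii), taking $*$ of $\phi b = b\phi$ and using closure of $B'$ under $*$ yields $\phi^* b = b\phi^*$. Then
$$[\phi^*, bm] = \phi^*(bm) - (bm)\phi^* = b(\phi^* m) - b(m \phi^*) = b[\phi^*, m] = 0,$$
using only $B$-bilinearity and balancing (no problematic triple products of $M$-elements); the case of $mb$ is symmetric. For (iii), $\pi$ is a conditional expectation: $(B')^\perp$ is stable under left and right multiplication by $B'$, since for $w \in B'$ and $y \in (B')^\perp$, $\tau(z^*(wy)) = \tau((w^*z)^* y) = 0$ as $w^*z \in B'$. Hence $\pi$ is $B'$-bilinear; it preserves $*$ because $B'$ is $*$-closed; and $\tau \circ \pi = \tau$. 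The $*$-bimodule axioms then descend by applying $\pi$: the identities $(mn^*)^* = nm^*$ and $(m^*n)^* = n^*m$ pass through; positivity $\pi(mm^*), \pi(m^*m) \geq 0$ is a standard fact for conditional expectations; non-degeneracy follows since $\pi(mm^*) = 0$ implies $\tau(mm^*) = 0$ and $\tau$ is faithful; the trace-compatibility reduces to the original via $\tau \circ \pi = \tau$.

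The main obstacle is disciplined bookkeeping around the nonassociativity in the Caution: one must verify that each step uses only associativity/linearity that is actually guaranteed by the $B$--$B$-bimodule structure and the $B$-bilinearity of the inner products. The fortunate structural point is that the genuinely non-associative terms $\phi^*(b\phi)$ and $(\phi b)\phi^*$ in $\Delta_B(b)$ enter in a pattern that is automatically $*$-symmetric in $b$, so they drop out of the commutator check and leave precisely $[[\phi^*, \phi], b^*]$. This makes the centrality hypothesis on $[\phi^*, \phi]$ both very natural and exactly sufficient for the construction.
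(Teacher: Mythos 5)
Your proof is correct, and it is worth comparing the two routes. For the $*$-closure of $B'$ you expand the Laplacian $\Delta_B(b)=[\phi^*,[\phi,b]]$ into four terms and show $\Delta_B(b^*)-\Delta_B(b)^*=[[\phi^*,\phi],b^*]$; note that the $*$-equivariance of the two mixed terms $\phi^*(b\phi)$ and $(\phi b)\phi^*$ requires not only $(mn^*)^*=nm^*$ but also the middle associativity $(m^*a)n=m^*(an)$ and $(ma)n^*=m(an^*)$ --- both are available since the inner products are defined on $\otimes_B$, but the Caution in the paper makes it worth invoking this by name. The paper instead runs the chain
\[
\langle[\phi,b],[\phi,b]\rangle=\langle b,[\phi^*,[\phi,b]]\rangle=\langle b,[\phi,[\phi^*,b]]\rangle=\langle[\phi^*,b],[\phi^*,b]\rangle,
\]
applying Lemma~\ref{diff_adjoint} twice and centrality once via $[\phi^*,[\phi,b]]-[\phi,[\phi^*,b]]=[[\phi^*,\phi],b]$, then concludes from $[\phi^*,b]=-[\phi,b^*]^*$; this is the same underlying computation seen through norms rather than $*$-equivariance of $\Delta_B$, and is somewhat more compact. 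The more genuine divergence is in the bimodule-map property of the projection $\pi\colon B\to B'$: you show $(B')^\perp$ is a $B'$-sub-bimodule directly from $\tau$-orthogonality plus $*$-closure, so $\pi$ is a $\tau$-preserving conditional expectation and positivity, non-degeneracy, and trace compatibility of the projected pairings all follow at once from standard facts. The paper instead uses Lemma~\ref{diff_adjoint} to write $P(b)-b=[\phi^*,m]$ and the Leibniz-type identity $a[\phi^*,m]=[\phi^*,am]$ for $a\in B'$ (which itself quietly uses $*$-closure and middle associativity), giving the bimodule-map property and $\tau\circ P=\tau$ while leaving positivity and faithfulness implicit. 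Both are valid; yours trades the explicitness of the exact-term description for the convenience of conditional-expectation machinery, and is somewhat more complete on the routine axioms.
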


\begin{proof}
We have
\begin{align}
\langle [\phi,b],[\phi,b]\rangle &= \langle b,[\phi^*,[\phi,b]]\rangle \\
                                 &= \langle b,[\phi,[\phi^*,b]]\rangle \\
                                 &= \langle [\phi^*,b],[\phi^*,b] \rangle
\end{align}
but $[\phi^*,b]=-[\phi,b^*]^*$, so $[\phi,b]=0$ implies $[\phi,b^*]=0$.
Thus, $B'$ is a $*$-subalgebra.

If $b\in B'$, $m\in M'$, then
\begin{equation}
[\phi^*,bm]=[\phi^*,b]m+b[\phi^*,m]=0
\end{equation}
so $bm\in M'$ and similarly $mb\in M'$. 
Thus $M'$ is a $B'$--$B'$ bimodule.

Next, let $P:B\to B'$ be the orthogonal projection.
By Lemma~\ref{diff_adjoint} it is characterized by $P(b)\in B'$ and $P(b)-b=[\phi^*,m]$ for some $m\in M$.
This also shows that $\tau(P(b))=\tau(b)$.
We claim that if $a\in B'$, $b\in B$, then $P(ab)=aP(b)$.
To see this, let $P(b)-b=[\phi^*,m]$, then
\begin{equation}
aP(b)-ab=a[\phi^*,m]=[\phi^*,am].
\end{equation}
As a consequence, we see that 
\begin{gather}
M'\otimes \overline{M'}\to B',\qquad m\otimes n^*\mapsto P(mn^*) \\
\overline{M'}\otimes M'\to B',\qquad m^*\otimes n\mapsto P(m^*n)
\end{gather}
are maps of bimodules. 
Also, \eqref{bimodule_trace_rel} for $M'$ follows from the corresponding identity for $M$ and $\tau(P(b))=\tau(b)$.
\end{proof}

\subsection{Monotonicity and homogeneity}
\label{sec_monotonicity}

A key property of the flow \eqref{gradflow}, for our purposes, is a certain kind of monotonicity.

\begin{prop}[Monotonicity]
\label{prop_mono}
Let $h_1(t),h_2(t)$ be solutions of \eqref{gradflow} with $h_1(0)\leq h_2(0)$, then $h_1(t)\leq h_2(t)$ for all $t\geq 0$.
\end{prop}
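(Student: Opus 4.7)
The plan is to put $g(t) := h_2(t) - h_1(t)$, regarded as a path of self-adjoint elements of $B$ with $g(0) \geq 0$, and show that the vector field driving $g$ points into the positive-semidefinite cone at every boundary point. The global conclusion $g(t) \geq 0$ for $t \geq 0$ I would then extract from a Nagumo-type invariance principle. The cleanest implementation is to work with $g_\varepsilon(t) := g(t) + \varepsilon e^{Kt} I$ for $K$ larger than a local Lipschitz constant of the right-hand side of \eqref{gradflow}, derive a strict contradiction at any first zero eigenvalue of $g_\varepsilon$, and then let $\varepsilon \to 0$.

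Concretely, at a time $t_0 \geq 0$ with $g(t_0) \geq 0$ and $v$ a unit vector satisfying $g(t_0)v = 0$, set $w := h_1(t_0)v = h_2(t_0)v$; by self-adjointness also $v^* h_k(t_0) = w^*$ for $k=1,2$. Rewriting \eqref{gradflow} as $\dot h = \phi^* h\phi - h\phi h^{-1}\phi^* h - h\rho$, one finds
\[
\dot g(t_0) \;=\; \phi^* g\, \phi \;-\; \bigl(h_2\phi h_2^{-1}\phi^* h_2 - h_1\phi h_1^{-1}\phi^* h_1\bigr) \;-\; g\rho,
\]
and I would evaluate $\langle v, \dot g(t_0)v\rangle$ term by term.

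The first term contributes $\langle \phi v,\, g(\phi v)\rangle \geq 0$ because $g \geq 0$. The third contributes $-\langle v,\, g\rho v\rangle = -\langle gv,\, \rho v\rangle = 0$, using only self-adjointness of $g$ together with $gv = 0$. For the middle term, the coincidence $h_1 v = h_2 v = w$ collapses $\langle v,\, h_k\phi h_k^{-1}\phi^* h_k v\rangle = \langle w,\, \phi h_k^{-1}\phi^* w\rangle$ for $k = 1,2$, so its contribution to $\langle v,\dot g(t_0)v\rangle$ is $-\langle w,\, \phi(h_2^{-1} - h_1^{-1})\phi^*\, w\rangle$. Since $h_1 \leq h_2$ with both positive gives $h_2^{-1} \leq h_1^{-1}$ (standard operator monotonicity of $x \mapsto -x^{-1}$), the element $C := h_2^{-1} - h_1^{-1}$ is negative; writing $C = -D^2$ with $D = D^* \geq 0$ yields $\phi C \phi^* = -(\phi D)(\phi D)^* \leq 0$ by the defining positivity of the bimodule inner product $M \otimes_B \overline{M} \to B$. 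Summing the three contributions gives $\langle v, \dot g(t_0) v\rangle \geq 0$, which is exactly the statement that $\dot g(t_0)$ lies in the tangent cone to the PSD cone at $g(t_0)$.

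The hardest part I expect is not the algebraic computation, which is short, but the invariance step: the boundary of the PSD cone is stratified and the linearization of the flow need not preserve it, so one genuinely needs the $\varepsilon$-perturbation (or a general Nagumo invariance theorem) to promote the pointwise tangential inequality into forward invariance. Once the bimodule positivity $\phi C\phi^* \leq 0$ for $C \leq 0$ is identified as the algebraic heart of the matter, the rest of the argument mirrors the classical scalar comparison principle for cooperative systems already observed in the DAG case.
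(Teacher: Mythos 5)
Your argument is correct and essentially the same as the paper's: both reduce to showing that the flow is tangential to (or points into) the positive-semidefinite cone of $g = h_2 - h_1$ at any boundary point, and both ultimately rest on $h_1 \leq h_2 \Rightarrow h_2^{-1}\leq h_1^{-1}$ combined with the positivity of the $B$-valued inner product on $M$. The only notable differences are that the paper streamlines the algebra by a coordinate change normalizing $h_2 = 1$ before computing, and that you are more careful to spell out the Nagumo/$\varepsilon e^{Kt}$-perturbation step needed to upgrade pointwise tangentiality along a stratified boundary to forward invariance, a step the paper leaves implicit.
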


\begin{proof}
Consider
\begin{equation}
A:=\{(g,h)\in\mathcal P\times\mathcal P\mid g\leq h\}
\end{equation}
which is a manifold with corners.
To prove the proposition it suffices to show that the flow on pairs $(g,h)\in\mathcal P\times\mathcal P$ is pointing inwards or in a tangential direction on the boundary $\partial A$, which is the subset where $g-h$ is not invertible.
Assume, for convenience, that $B$ is given concretely as
\begin{equation}
B=\mathrm{End}(V_1)\times\ldots\times\mathrm{End}(V_n)
\end{equation}
where $V_i$ are finite-dimensional Hermitian spaces.
Then the claim to check is that 
\begin{equation}
b:=h\left[h^{-1}\phi^*h,\phi\right]-g\left[g^{-1}\phi^*g,\phi\right]-(h-g)\rho
\end{equation}
is non-negative on $\mathrm{Ker}(g-h)$ for $(g,h)\in\partial A$.

Since the flow is coordinate-independent, we may assume that $h$ is the identity.
So let $v\in V:=\bigoplus V_i$ with $g(v)=v$, then
\begin{align}
v^*bv&=v^*\left(\phi^*\phi-\phi\phi^*-\phi^*g\phi+g\phi g^{-1}\phi^*g\right)v \\
&=(\phi v)^*(1-g)\phi v+(\phi^*v)^*\left(g^{-1}-1\right)\phi^*v\geq 0
\end{align}
since $1-g\geq 0$ and thus $g^{-1}-1\geq 0$.
\end{proof}

As a first consequence we see that any two solutions have the same asymptotics by a ``sandwiching'' argument.

\begin{coro}
Let $h_1,h_2$ be solutions of \eqref{gradflow} for $t\geq 0$. 
Then there is a constant $C>0$ such that
\begin{equation}
\frac{1}{C}h_1(t)\leq h_2(t)\leq Ch_1(t)
\end{equation}
for $t\geq 0$.
\end{coro}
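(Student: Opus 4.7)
The plan is to combine the monotonicity result (Proposition~\ref{prop_mono}) with a scaling symmetry of the flow, which is presumably the ``homogeneity'' alluded to in the subsection title.

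First I would observe that the equation \eqref{gradflow} is invariant under rescaling $h \mapsto \lambda h$ by any positive real number $\lambda > 0$. Indeed, since $\lambda$ is a central scalar, $(\lambda h)^{-1} \phi^* (\lambda h) = h^{-1}\phi^* h$, so the right-hand side is unchanged, while the left-hand side transforms as $(\lambda h)^{-1} \frac{d(\lambda h)}{dt} = h^{-1} \frac{dh}{dt}$. Consequently, if $h(t)$ is a solution of \eqref{gradflow}, so is $\lambda h(t)$ for every $\lambda > 0$.

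Next, since $h_1(0), h_2(0) \in \mathcal{P}$ are both self-adjoint with strictly positive spectrum, and $B$ is finite dimensional, there exists a constant $C > 0$ (sufficiently large) such that
\begin{equation}
C^{-1} h_1(0) \leq h_2(0) \leq C\, h_1(0);
\end{equation}
explicitly one may take $C = \max(\|h_1(0)^{1/2} h_2(0)^{-1} h_1(0)^{1/2}\|, \|h_2(0)^{1/2} h_1(0)^{-1} h_2(0)^{1/2}\|)$, or any comparable bound.

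Now the curves $t \mapsto C^{-1} h_1(t)$ and $t \mapsto C\, h_1(t)$ are both solutions of \eqref{gradflow} by the scaling invariance. Applying Proposition~\ref{prop_mono} to the pairs of initial conditions
\begin{equation}
C^{-1} h_1(0) \leq h_2(0) \qquad \text{and} \qquad h_2(0) \leq C\, h_1(0),
\end{equation}
we conclude that $C^{-1} h_1(t) \leq h_2(t) \leq C\, h_1(t)$ for all $t \geq 0$, which is the desired inequality. No step is really an obstacle here: the only substantive input is the scale invariance of \eqref{gradflow}, and the rest is a direct sandwiching via monotonicity.
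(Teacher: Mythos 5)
Your proof is correct and takes essentially the same approach as the paper: the paper's proof of this corollary is the terse ``We can find a $C>0$ such that the inequality holds for $t=0$; by monotonicity, it holds for all $t\geq 0$,'' and your argument simply spells out the scalar-scaling invariance of \eqref{gradflow} that makes the invocation of Proposition~\ref{prop_mono} legitimate (i.e.\ that $C^{\pm 1}h_1$ are again solutions). That step is needed but left implicit in the paper, so your fuller write-up is accurate and matches the intended reasoning.
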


A related result is established by Harada--Wilkin~\cite{harada_wilkin} who show that the flow is distance decreasing.

\begin{proof}
We can find a $C>0$ such that the inequality holds for $t=0$. 
By monotonicity, it holds for all $t\geq 0$.
\end{proof}

Call $h$ an \textbf{asymptotic solution} of \eqref{gradflow} if for some (hence any) actual solution $g$ there is a $C>0$ such that $C^{-1}g(t)\leq h(t)\leq Cg(t)$ for sufficiently large $t$.
We will find that \eqref{gradflow} always admits explicit asymptotic solutions in terms of iterated logarithms, and these are generally not actual solutions.

We return to the point of view that the flow is changing coordinates on the $E_i$'s instead of the metric.
Write $h=x^*x$, then \eqref{gradflow} implies that
\begin{equation}\label{gradflow_root}
\dot{x}x^{-1}+\left(\dot{x}x^{-1}\right)^*=\left[\left(x\phi x^{-1}\right)^*,x\phi x^{-1}\right]-\rho.
\end{equation}
Note that this equation only determines the selfadjoint part of $\dot{x}x^{-1}$, which corresponds to the fact that $x$ is  determined only up to multiplication by unitary elements on the left.

\begin{prop}[Homogeneity]
\label{prop_homogen}
Let $x$ be a solution of \eqref{gradflow_root} and $f:\RR\to\RR$ a continuous function, then
\begin{equation}
y:=x\exp\left(\frac{1}{2}\int f\right)
\end{equation}
solves
\begin{equation}
\dot{y}y^{-1}+\left(\dot{y}y^{-1}\right)^*=\left[\left(y\phi y^{-1}\right)^*,y\phi y^{-1}\right]-\rho+f.
\end{equation}
\end{prop}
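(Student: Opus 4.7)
The plan is to verify the claimed identity by a direct calculation, exploiting the fact that $\exp\bigl(\tfrac12\int f\bigr)$ is a real-valued scalar function and therefore commutes with every element of $B$ and is self-adjoint. Write $g(t):=\exp\bigl(\tfrac12\int_0^t f(s)\,ds\bigr)\in\RR_{>0}$, so that $y=xg$.

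First I would observe the two structural cancellations. Since $g$ is a central scalar, conjugation is unaffected:
\begin{equation}
y\phi y^{-1}=xg\,\phi\,g^{-1}x^{-1}=x\phi x^{-1},
\end{equation}
so the commutator on the right-hand side is literally the same as in the equation for $x$. Second, differentiating $y=xg$ and using $\dot g=\tfrac12 f g$ gives
\begin{equation}
\dot y\,y^{-1}=\dot x\,x^{-1}+\tfrac12 f,
\end{equation}
again because $g$ is a commuting scalar.

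Next I would add the $*$-adjoint. Because $f$ is real-valued, $(\tfrac12 f)^*=\tfrac12 f$, hence
\begin{equation}
\dot y\,y^{-1}+\bigl(\dot y\,y^{-1}\bigr)^*=\dot x\,x^{-1}+\bigl(\dot x\,x^{-1}\bigr)^*+f.
\end{equation}
Substituting the equation \eqref{gradflow_root} satisfied by $x$ and using the identification of conjugates from the first step yields
\begin{equation}
\dot y\,y^{-1}+\bigl(\dot y\,y^{-1}\bigr)^*=\bigl[(x\phi x^{-1})^*,\,x\phi x^{-1}\bigr]-\rho+f=\bigl[(y\phi y^{-1})^*,\,y\phi y^{-1}\bigr]-\rho+f,
\end{equation}
which is the desired identity.

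There is no real obstacle here: the entire content of the proposition is that a pure scale factor shifts the inhomogeneous term of \eqref{gradflow_root} by $f$ without affecting the commutator, and this is visible from the computation above. The only points requiring care are the centrality/self-adjointness of the scalar $g$ (so that $y\phi y^{-1}=x\phi x^{-1}$ and the adjoint of $\tfrac12 f$ is itself) and the product rule applied on the right, both of which are immediate.
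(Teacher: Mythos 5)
Your proof is correct and follows essentially the same approach as the paper: write $y=xg$ with $g=\exp\bigl(\tfrac12\int f\bigr)$ a scalar, differentiate to get $\dot y\,y^{-1}=\dot x\,x^{-1}+\tfrac12 f$, and note that the commutator term is unchanged because the scalar $g$ commutes with $\phi$. The only difference is that you spell out the centrality/self-adjointness of $g$ explicitly, which the paper leaves implicit.
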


\begin{proof}
Let 
\begin{equation}
F=\frac{1}{2}\int f
\end{equation}
then
\begin{equation}
\dot{y}y^{-1}=\left(\dot{x}e^F+x\dot{F}e^F\right)e^{-F}x^{-1}=\dot{x}x^{-1}+\dot{F}
\end{equation}
and the right hand side of \eqref{gradflow_root} remains unchanged if $x$ is replaced by $y$.
\end{proof}

The following gives a sufficient criterion to recognize asymptotic solutions. 
It relies on monotonicity and homogeneity.

\begin{prop}\label{sol_up_to_l1}
Suppose
\begin{equation}
\dot{x}x^{-1}+\left(\dot{x}x^{-1}\right)^*=\left[\left(x\phi x^{-1}\right)^*,x\phi x^{-1}\right]-\rho+s
\end{equation}
with $s=s(t)$ an absolutely integrable function with values in selfadjoint elements of $B$. 
Then $h=x^*x$ is an asymptotic solution of \eqref{gradflow}.
\end{prop}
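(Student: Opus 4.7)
The plan is to view $h=x^*x$ as satisfying \eqref{gradflow} up to an $L^1$ perturbation and to sandwich $h(t)$ between genuine solutions of \eqref{gradflow} via a bounded scalar gauge combined with a mild strengthening of the monotonicity argument of Proposition~\ref{prop_mono}. The key algebraic input is that $\mathrm{RHS}(h):=[h^{-1}\phi^*h,\phi]-\rho$ is invariant under rescaling $h\mapsto c(t)h$ by a positive scalar, since $c$ commutes with $\phi$ and cancels from $h^{-1}\phi^*h$, and $\rho$ is central.

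A direct computation, using $x^{-1}\rho x=\rho$ and
\[
x^{-1}\bigl[(x\phi x^{-1})^*,\,x\phi x^{-1}\bigr]x=[h^{-1}\phi^*h,\phi],
\]
shows that $h=x^*x$ satisfies $\dot h = h\,\mathrm{RHS}(h)+x^*s x$. Set $\sigma(t):=\|s(t)\|_{\mathrm{op}}$; by hypothesis $\sigma\in L^1$, and the operator inequality $-\sigma(t)I\leq s(t)\leq\sigma(t)I$ yields $-\sigma(t)h(t)\leq x^*s x\leq\sigma(t)h(t)$ as self-adjoint operators. Let $F(t):=\int_0^t\sigma\leq F_\infty<\infty$ and define $h_\pm(t):=e^{\pm F(t)}h(t)$. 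Using scaling-invariance of $\mathrm{RHS}$, one obtains
\[
\dot h_\pm = h_\pm\,\mathrm{RHS}(h_\pm)+p_\pm(t),\qquad p_\pm(t)=\pm\sigma(t)h_\pm(t)+e^{\pm F(t)}x^*s x,
\]
and the bound on $x^*s x$ implies $p_+(t)\geq 0$ and $p_-(t)\leq 0$ as self-adjoint operators, so $h_\pm$ solve \eqref{gradflow} up to a definite-signed source term.

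It remains to extend Proposition~\ref{prop_mono} to the perturbed setting: if $g$ solves \eqref{gradflow}, $\tilde h$ satisfies $\dot{\tilde h}=\tilde h\,\mathrm{RHS}(\tilde h)+p(t)$ with self-adjoint $p(t)\geq 0$, and $g(0)\leq\tilde h(0)$, then $g(t)\leq\tilde h(t)$ for all $t\geq 0$; the analogue with $p\leq 0$ reverses the inequality. The proof is the one of Proposition~\ref{prop_mono}: on the boundary of $\{(g,\tilde h):g\leq\tilde h\}$ and for $v\in\ker(\tilde h-g)$, the extra term $v^*p v$ is non-negative and simply reinforces the inward-pointing vector field established there. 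Choose genuine solutions $g_1,g_2$ of \eqref{gradflow} with $g_1(0)\leq h(0)\leq g_2(0)$ and apply the extended monotonicity to $h_+$ and $h_-$ respectively to obtain
\[
e^{-F_\infty}g_1(t)\leq h(t)\leq e^{F_\infty}g_2(t).
\]
The corollary following Proposition~\ref{prop_mono} says that $g_1$ and $g_2$ are comparable up to multiplicative constants, so $h\asymp g$ for any genuine solution $g$, which is the definition of an asymptotic solution. The main obstacle is the perturbed monotonicity; the scaling-invariance of $\mathrm{RHS}$ is precisely what converts the matrix-valued perturbation $x^*s x$ into a two-sided sandwich by scalar gauges $e^{\pm F}h$, so the $L^1$ hypothesis on $s$ translates into a uniform multiplicative bound.
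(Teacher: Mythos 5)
Your proof is correct and uses essentially the same two ingredients as the paper: the scalar gauge/homogeneity of the flow (your scaling-invariance of $\mathrm{RHS}$ is exactly Proposition~\ref{prop_homogen} recast for $h$ rather than $x$) and a strengthened version of the monotonicity Proposition~\ref{prop_mono} that tolerates a definite-signed perturbation. The only difference in packaging is that you rescale $h$ to $h_\pm=e^{\pm F}h$ and compare with genuine solutions, whereas the paper compares $x$ directly with a solution $y$ of the $f$-perturbed equation for scalar $f\geq s$ and then applies the scalar case; these amount to the same sandwich.
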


\begin{proof}
In the special case when $s$ is scalar-valued (i.e. takes values in $\RR\cdot 1$) the claim follows immediately from Proposition~\ref{prop_homogen}, since the absolute value of an antiderivative of $s$ is bounded by assumption.
For the general case it suffices to show (by symmetry) that if $f$ is scalar-valued with $f\geq s$ and $y$ solves
\begin{equation}
\dot{y}y^{-1}+\left(\dot{y}y^{-1}\right)^*=\left[\left(y\phi y^{-1}\right)^*,y\phi y^{-1}\right]-\rho+f
\end{equation}
with $(x^*x)(0)\leq (y^*y)(0)$, then $(x^*x)(t)\leq (y^*y)(t)$ for $t\geq 0$.
This is a strengthening of the monotonicity property, and proven in much the same way as Proposition~\ref{prop_mono}.
The only modification is the following:
Assuming $x(0)=1$ after a change of coordinates, the additional term is
\begin{equation}
v^*(y(0))^*f(0)y(0)v-v^*s(0)v=v^*(f(0)-s(0))v\geq 0,
\end{equation}
where we use the fact that $f(0)$ is a scalar and $(y^*y)(0)v=v$.
\end{proof}

\subsection{Asymptotic solution}
\label{sec_ansatz}

The goal of this subsection is to construct an asymptotic solution of \eqref{gradflow} using the iterated weight filtration on the lattice of subrepresentations, which is described in terms of our $*$-data as follows.
The selfadjoint elements of $B$ are partially ordered by $a\leq b$ iff $b-a$ is a non-negative operator.
In particular, we get a partial order on \textit{projectors}, those $p\in B$ with $p^2=p^*=p$.
Because $B$ is finite-dimensional, the poset $\Lambda(B)$ of projectors is an artinian modular lattice.
To see this, identify $B$ with a product of matrix algebras and projectors with their images.
The lattice of subrepresentations of $\phi\in M$ is the sublattice
\begin{equation}
\Lambda(B,\phi):=\{p\in\Lambda(B)\mid p\phi p=\phi p \}
\end{equation}
of projectors which are compatible with $\phi$.
The trace $\tau$ on $B$ together with $\rho\in B$ provide a polarization
\begin{equation}
Z:K(\Lambda(B,\phi))\to\CC,\qquad [p,q]\mapsto \tau((1+\rho i)(q-p))
\end{equation}
which sends positive classes to the right half-plane.
By the general theory, $\Lambda(B,\phi)$ has a HN filtration, and each semistable interval is further refined by a balanced filtration, perhaps iterated.
Since we are mainly interested in the refinement of the HN filtration, we assume that 
\begin{equation}\label{assert_semistable}
\Lambda(B,\phi)  \text{ is semistable of phase } 0
\end{equation}
which means that $\tau(\rho)=0$ and $\tau(\rho p)\leq 0$ for all $p\in\Lambda(B,\phi)$, i.e. just semistability with respect to $\theta$.

By our general theory for modular lattices there is a canonical iterated weight filtration $p_\pm(\lambda)$ in the sublattice $\Lambda(B,\phi)^0$ of semistables of phase $0$, as in \eqref{semistable_sublattice}. 
Let 
\begin{equation}
p_\lambda:=p_+(\lambda)-p_-(\lambda)
\end{equation}
which is a projector, though usually not in $\Lambda(B,\phi)$.
Since $p_\pm(\lambda)$ is an $\RR$-filtration, the $p_\lambda$ are mutually orthogonal and sum to $1$.
Split $\phi$ into its $\lambda$-components
\begin{equation}
\phi_\lambda=\sum_{\mu\in\RR}p_{\mu+\lambda}\phi p_\mu
\end{equation}
then $\phi_\lambda=0$ for $\lambda>0$ since $p_\pm(\lambda)\in\Lambda(B,\phi)$.

Since each interval
\begin{equation}
\left[p_\pm(\lambda),p_\pm(\lambda+1)\right]\subset\Lambda(B,\phi)^0
\end{equation}
is complemented by assumption, we can ensure, after conjugating $\phi$ by a suitable invertible element $b\in B$, that
\begin{equation}\label{phi_gap}
\phi_\lambda=0,\qquad\lambda\in(-1,0)
\end{equation}
where $b$ is chosen to take a splitting to an orthogonal one.
Also, by definition of $\Lambda(B,\phi)^0$ and the assumption that each $[p_+(\lambda),p_-(\lambda)]$ is complemented, applying Theorem~\ref{king_thm} and further conjugating $\phi$ we have
\begin{equation}\label{phi0_harm}
\left[\phi_0^*,\phi_0\right]=\rho.
\end{equation}
Furthermore, we can choose harmonic representatives of the $\phi_\lambda$, $\lambda\leq-1$, meaning we conjugate $\phi$ to get
\begin{equation}\label{phi1_harm}
\left[\phi_0^*,\phi_\lambda\right]=0,\qquad \lambda\leq-1.
\end{equation}

Let
\begin{equation}
r=\sum_{\lambda\in\RR}\lambda p_\lambda
\end{equation}
and
\begin{gather}
B':=\{b\in B\mid [r,b]=0,[\phi_0,b]=0\} \\
M':=\{m\in M\mid [r,m]=-m,[\phi_0^*,m]=0\}
\end{gather}
which are the harmonic $r$-degree $0$ part of $B$ and harmonic $r$-degree $-1$ part of $M$ respectively.
A slight extension of the proof of Proposition~\ref{harm_part_prop} shows that $B'$ is a $*$-subalgebra and $M'$ is a $B'$--$B'$ $*$-bimodule.
The point is that if $m,n\in M'$ then $[r,m^*n]=0$ automatically, but we still need to project to the harmonic part to get an element of $B'$ as in Proposition~\ref{harm_part_prop}.
Note also that 
\begin{equation}
\rho':=r\in \mathrm{center}(B'),\qquad \phi':=\phi_{-1}\in M'
\end{equation}
by definition and \eqref{phi1_harm}.
The defining property of the iterated weight filtration guarantees that the new quadruple $(B',\rho',M',\phi')$ is semistable of phase $0$.

Let $x$ be a solution of the flow \eqref{gradflow_root} for $(B',\theta',M',\phi')$, so
\begin{equation}\label{derived_quad_eq}
\dot{x}x^{-1}+\left(\dot{x}x^{-1}\right)^*=P\left(\left[\left(x\phi_{-1}x^{-1}\right)^*,x\phi_{-1}x^{-1}\right]\right)-r
\end{equation}
where $P:B\to B'$ is the orthogonal projection.
Note that $(B',\theta',M',\phi')$ is \textit{polystable}, i.e. $\Lambda(B',\phi')^0$ complemented, if and only if there exists a constant solution $x$.
It follows from the calculation below and induction that $x$ grows at most polynomially in general.

\begin{lem}\label{lem_log_ansatz}
Suppose $x:[0,\infty)\to B'$ is a solution to \eqref{derived_quad_eq}, then
\begin{equation}
y(t):=t^{r/2}x(\log t)
\end{equation}
satisfies
\begin{equation}
\dot{y}y^{-1}+\left(\dot{y}y^{-1}\right)^*=P\left(\left[\left(y\phi_{-1}y^{-1}\right)^*,y\phi_{-1}y^{-1}\right]\right).
\end{equation}
\end{lem}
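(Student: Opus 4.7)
The plan is a direct substitution and homogeneity calculation, exploiting two structural facts about the derived data: first, that $r \in \mathrm{center}(B')$, so the functional calculus element $t^{r/2}$ is central in $B'$ and commutes with $x(\log t)$; second, that $\phi_{-1}$ has $r$-weight $-1$ in the sense $[r,\phi_{-1}]=-\phi_{-1}$, which by exponentiation gives the conjugation identity
\begin{equation}
t^{r/2}\,\phi_{-1}\,t^{-r/2} \;=\; t^{-1/2}\phi_{-1}.
\end{equation}

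First I would compute $\dot{y}y^{-1}$ via the product rule. Writing $s=\log t$, the chain rule gives $\frac{d}{dt}x(\log t)=\frac{1}{t}\dot{x}(s)$, and differentiating $t^{r/2}$ produces the factor $\frac{r}{2t}t^{r/2}$. Centrality of $r$ in $B'$ lets all the $t^{\pm r/2}$ factors cancel, yielding
\begin{equation}
\dot{y}y^{-1} \;=\; \frac{r}{2t}\;+\;\frac{1}{t}\,\dot{x}(s)x(s)^{-1}.
\end{equation}
Taking the selfadjoint part (using $r^*=r$) and substituting the hypothesis \eqref{derived_quad_eq} that $x$ satisfies produces exactly
\begin{equation}
\dot{y}y^{-1}+(\dot{y}y^{-1})^* \;=\; \frac{1}{t}\,P\!\left([(x\phi_{-1}x^{-1})^*,x\phi_{-1}x^{-1}]\right)\!(s),
\end{equation}
since the two $r/t$ contributions, one from differentiation and the minus-$r$ on the right of \eqref{derived_quad_eq}, cancel precisely.

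Next I would identify the right-hand side with $P([(y\phi_{-1}y^{-1})^*, y\phi_{-1}y^{-1}])$. Using $y=t^{r/2}x(s)$ together with the weight identity above,
\begin{equation}
y\phi_{-1}y^{-1} \;=\; t^{r/2}\,\bigl(x\phi_{-1}x^{-1}\bigr)\,t^{-r/2} \;=\; t^{-1/2}\,x\phi_{-1}x^{-1},
\end{equation}
where in the last step I use that $x\phi_{-1}x^{-1}$ inherits $r$-weight $-1$ from $\phi_{-1}$ because $x\in B'$ commutes with $r$. Squaring contributes a factor $t^{-1}$ to the commutator, which matches the $1/t$ produced above; since $P$ is $B'$-linear and in particular commutes with multiplication by the scalar $t^{-1}$, the identification is complete.

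The only genuinely delicate point, and what I would state carefully before the computation, is the compatibility of $P$ with the computation: specifically that $x\phi_{-1}x^{-1}$ lies in the correct $r$-graded component so that conjugation by $t^{r/2}$ produces a scalar factor, and that the central factor $t^{-1}$ passes through $P$. Everything else is a routine verification that the explicit time-change $s=\log t$ and scaling by $t^{r/2}$ convert the autonomous equation \eqref{derived_quad_eq}, where $r$ appears as a drift term, into the driftless equation displayed in the lemma.
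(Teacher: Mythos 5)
Your proof is correct and follows essentially the same route as the paper's: a direct product-rule computation of $\dot{y}y^{-1}$, using that $r\in\mathrm{center}(B')$ commutes with $x$ and $\dot x$, followed by the observation that the $r$-weight $-1$ property of $\phi_{-1}$ (preserved under conjugation by $x\in B'$) gives $y\phi_{-1}y^{-1}=t^{-1/2}x\phi_{-1}x^{-1}$, so the $t^{-1}$ factors on the two sides match and the $r/t$ drift cancels against the $-r$ in \eqref{derived_quad_eq}. The paper's proof is the same calculation, stated slightly more compactly.
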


\begin{proof}
Indeed,
\begin{align}
\frac{dy}{dt}(t)(y(t))^{-1} &= \left(\frac{r}{2}t^{r/2-1}x(\log t)+t^{r/2}\frac{dx}{dt}(\log t)t^{-1}\right)(x(\log t))^{-1}t^{-r/2} \\
&= t^{-1}\left(\frac{r}{2}+\frac{dx}{dt}(\log t)(x(\log t))^{-1}\right)
\end{align}
and
\begin{equation}
y(t)\phi_{-1}(y(t))^{-1}=t^{-1/2}x(\log t)\phi_{-1}(x(\log t))^{-1}
\end{equation}
since $\phi_{-1}$ has $r$-degree $-1$.
We use here the fact that $x$ commutes with $r$, thus $t^{r/2}$, as $x(t)\in B'$ by definition.
\end{proof}

Let
\begin{equation}
\Delta:B\to B,\qquad \Delta(b)=\left[\phi_0^*,[\phi_0,b]\right]
\end{equation}
and
\begin{equation}
G:B\to B,\qquad 1=P+\Delta G=P+G\Delta,\qquad PG=GP=0
\end{equation}
the ``Green's operator''.
All three $P,\Delta,G$ are endomorphisms of $B$ as a $B'$--$B'$ bimodule and commute with the $*$ operation.

Let $x,y$ be as in the previous lemma and consider
\begin{gather}
k:=\left[\left(y\phi_{-1}y^{-1}\right)^*,y\phi_{-1}y^{-1}\right] \\
z:=y\left(1+\frac{1}{2}G(y^{-1}ky)\right).
\end{gather}

\begin{lem}
The function $z$ above is a solution of \eqref{gradflow_root} up to terms in $L^1$, i.e. satisfies the hypothesis of Proposition~\ref{sol_up_to_l1} and is thus an asymptotic solution.
\end{lem}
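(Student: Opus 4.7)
The plan is to verify the hypothesis of Proposition~\ref{sol_up_to_l1} by expanding both sides of \eqref{gradflow_root} to first order in a small parameter. Since $P$ and $G$ are endomorphisms of $B$ as a $B'$--$B'$ bimodule and $y\in B'$, the identity $G(y^{-1}ky)=y^{-1}G(k)y$ rewrites $z$ as $z=(1+v)y$ with $v:=\tfrac{1}{2}G(k)$. Self-adjointness of $k$ (clear from $k=[A^*,A]$) and the fact that $G$ commutes with $*$ give $v^*=v$; taking the inductive solution $x$ to be positive self-adjoint makes $y=t^{r/2}x$ positive self-adjoint too, so $y$ commutes with $\phi_0^*$ as well as $\phi_0$. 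An inductive hypothesis on the depth of the iterated weight filtration, applied to $(B',\rho',M',\phi')$, ensures $x(\log t)$ grows at most polynomially in $\log t$, so $\eta_{-1}:=y\phi_{-1}y^{-1}=t^{-1/2}x\phi_{-1}x^{-1}=O(t^{-1/2}\mathrm{polylog}\,t)$, $k,v=O(t^{-1}\mathrm{polylog}\,t)$ and $\dot v=O(t^{-2}\mathrm{polylog}\,t)\in L^1$, while $P(k)$ itself is generally not in $L^1$.

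For the left-hand side, differentiating $z=(1+v)y$, symmetrizing with $v^*=v$, and invoking Lemma~\ref{lem_log_ansatz} for $\dot yy^{-1}+(\dot yy^{-1})^*=P(k)$ yield
\[
\dot zz^{-1}+(\dot zz^{-1})^* \;=\; P(k) \;+\; 2\dot v \;+\; [v,A-A^*] \;+\; R_{\mathrm{LHS}},
\]
where $A:=\dot yy^{-1}$ and $R_{\mathrm{LHS}}$ collects terms with at least two small factors ($v^2$, $v\dot v$, $v^2A$), all $O(t^{-2}\mathrm{polylog}\,t)$. The anti-Hermitian part $A-A^*$ is $O(t^{-1}\mathrm{polylog}\,t)$, so $[v,A-A^*]$ is $L^1$, and the left-hand side equals $P(k)$ modulo $L^1$.

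For the right-hand side, write $\Psi:=y\phi y^{-1}=\phi_0+\eta_{-1}+\eta_{\le-2}$ using the gap \eqref{phi_gap}, with $\eta_\lambda:=y\phi_\lambda y^{-1}=O(t^{\lambda/2}\mathrm{polylog}\,t)$. Since $y$ commutes with $\phi_0$ and $\phi_0^*$, identity \eqref{phi0_harm} produces the $\rho$ term and \eqref{phi1_harm} kills every cross-commutator $[\phi_0^*,\eta_\lambda]$, so $[\Psi^*,\Psi]=\rho+k$ modulo $L^1$ (leftover cross terms $[\eta_{-1}^*,\eta_\lambda]$ with $\lambda\le-2$ are $O(t^{-3/2})$). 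Expanding $z\phi z^{-1}=\Psi+[v,\Psi]+O(v^2\Psi)$ and $(z\phi z^{-1})^*=\Psi^*+[\Psi^*,v]+O(v^2\Psi^*)$ with $v^*=v$, the first-order-in-$v$ correction to $[(z\phi z^{-1})^*,z\phi z^{-1}]$ has leading part $[\phi_0^*,[v,\phi_0]]+[[\phi_0^*,v],\phi_0]$ (all other pieces pick up an additional factor of $\eta_\lambda=O(t^{\lambda/2})$ and are $L^1$). The Jacobi identity applied to $(\phi_0^*,v,\phi_0)$ gives
\[
[[\phi_0^*,v],\phi_0] - [\phi_0^*,[v,\phi_0]] + [[\phi_0,\phi_0^*],v] = 0,
\]
and $[\phi_0,\phi_0^*]=-\rho$ is central in $B$, so both terms equal $-\Delta(v)$ and sum to $-2\Delta(v)$. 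Since $2v=G(k)$, this gives $-2\Delta(v)=-\Delta G(k)=-(1-P)(k)=P(k)-k$, so the right-hand side minus $\rho$ equals $k+(P(k)-k)+L^1=P(k)+L^1$.

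Thus $s:=\dot zz^{-1}+(\dot zz^{-1})^*-[(z\phi z^{-1})^*,z\phi z^{-1}]+\rho$ lies in $L^1$, and Proposition~\ref{sol_up_to_l1} applies. The main obstacle is the polynomial-$\log t$ size estimate for $x$, which must be proved by induction on the depth of the iterated weight filtration: each inductive step multiplies the a priori bound by a $\log$ factor, so finitely-many-step depth gives $\mathrm{polylog}(t)$ growth and the base case (polystable) is bounded. The algebraic bookkeeping in Step 3 — tracking which cross commutators are killed by \eqref{phi1_harm} and which merely gain $L^1$ decay — is routine but is precisely what forces the initial conjugations making \eqref{phi_gap}, \eqref{phi0_harm} and \eqref{phi1_harm} hold.
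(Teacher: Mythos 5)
Your proof follows essentially the same route as the paper's: rewrite $z$ using the bimodule property of $G$ so the correction factor commutes past $y$, expand both sides of \eqref{gradflow_root} to first order in the small quantity $v=\tfrac12 G(k)$, observe that the zeroth-order RHS gives $\rho+k$, that the first-order-in-$v$ correction gives $-\Delta G(k)=(P-1)(k)$ via \eqref{phi0_harm}, \eqref{phi1_harm} and (implicitly in the paper, explicitly for you) the Jacobi identity with centrality of $\rho$, and that all remaining terms are $O(t^{-1-\epsilon}\cdot\mathrm{polylog})$ hence $L^1$. The organization differs only cosmetically: the paper splits $\phi=\phi_0+\phi_{-1}+\phi_{<-1}$ \emph{before} commutating and handles each piece of $[(z\phi z^{-1})^*,z\phi z^{-1}]$ separately, while you compute $[\Psi^*,\Psi]$ first and then add the $v$-correction; both give $\rho+P(k)+L^1$.

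Two of your side remarks are slightly off, though neither affects the overall argument. First, the fact that $y$ commutes with $\phi_0^*$ does not need $y$ to be positive self-adjoint: it follows already from $y\in B'$, since $B'$ is a $*$-subalgebra (Proposition~\ref{harm_part_prop}), so commuting with $\phi_0$ is equivalent to commuting with $\phi_0^*$. Second, your claim that choosing $x$ positive self-adjoint makes $A-A^*=\dot yy^{-1}-(\dot yy^{-1})^*$ of order $O(t^{-1}\mathrm{polylog}\,t)$ is not immediate — positive self-adjointness of $x$ does \emph{not} by itself bound the anti-self-adjoint part of $\dot xx^{-1}$ (a simple $2\times2$ example with large condition number shows $\dot xx^{-1}$ can be much larger than its symmetrization). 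The clean fix, also tacit in the paper, is to exploit the gauge freedom $x\mapsto ux$ with $u$ unitary to arrange that $\dot xx^{-1}$ is self-adjoint; then $A-A^*$ vanishes identically, $[v,A-A^*]=0$, and the hypothesis of Proposition~\ref{sol_up_to_l1} only constrains the symmetrized quantity in any case. With that correction your argument matches the paper's.
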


It is important to note that the factor $y^{-1}z=(1+\frac{1}{2}G(y^{-1}ky))$ is bounded for large $t$, hence does not change the asymptotics. 
It is only needed to get a solution up to terms in $L^1$.

\begin{proof}
We write $O(t^\alpha\mathcal L)$ for terms which are $O(t^{\alpha})$ up to logarithmic corrections, e.g. $O(t^{\alpha}\log t)$, $O(t^\alpha\log t\log\log t)$, and so on.
For instance, since $\phi_{-1}$ has $r$-degree $-1$ and $k$ has $r$-degree $0$ we have
\begin{equation}
y\phi_{-1}y^{-1}=O(t^{-1/2}\mathcal L),\qquad k=O(t^{-1}\mathcal L),\qquad G(y^{-1}ky)=O(t^{-1}\mathcal L).
\end{equation}
Consequently,
\begin{equation}
\left(1+\frac{1}{2}G(y^{-1}ky)\right)^{-1}=\left(1-\frac{1}{2}G(y^{-1}ky)\right)+O(t^{-2}\mathcal L)
\end{equation}
and
\begin{equation}
\dot{z}=\dot{y}\left(1+\frac{1}{2}G(y^{-1}ky)\right)+yO(t^{-2}\mathcal L)
\end{equation}
where the terms in $O(t^{-2}\mathcal L)$ are of $r$-degree $0$, hence
\begin{equation}
\dot{z}z^{-1}=\dot{y}y^{-1}+O(t^{-2}\mathcal L).
\end{equation}

Recall the splitting
\begin{equation}
\phi=\phi_0+\phi_{-1}+\phi_{<-1}
\end{equation}
where $\phi_{<-1}$ collects components of $r$-degree $<-1-\epsilon$ for some $\epsilon>0$.
We have
\begin{align}
z\phi_0z&=y\left(1+\frac{1}{2}G(y^{-1}ky)\right)\phi_0\left(1-\frac{1}{2}G(y^{-1}ky)\right)y^{-1}+O(t^{-2}\mathcal L) \\
&=\phi_0+\frac{1}{2}\left[G(k),\phi_0\right]+O(t^{-2}\mathcal L)
\end{align}
and
\begin{gather}
z\phi_{-1}z^{-1}=y\phi_{-1}y^{-1}+O(t^{-3/2}\mathcal L) \\
z\phi_{<-1}z^{-1}=O(t^{(-1-\epsilon)/2}\mathcal L)
\end{gather}
hence
\begin{align}
[(z\phi_0z^{-1})^*&,z\phi_0z^{-1}]= \\
&=[\phi_0^*,\phi_0]+\frac{1}{2}[\phi_0^*,[G(k),\phi_0]]+\frac{1}{2}[[G(k),\phi_0]^*,\phi_0]+O(t^{-2}\mathcal L)\\
&=\theta-\Delta G(k)+O(t^{-2}\mathcal L) \\
&=\theta+(P-1)(k)+O(t^{-2}\mathcal L)
\end{align}
by \eqref{phi0_harm}.
Furthermore, by \eqref{phi1_harm},
\begin{gather}
\left[(z\phi_0z^{-1})^*,z\phi_{-1}z^{-1}\right]=O(t^{-3/2}\mathcal L) \\
\left[(z\phi_{-1}z^{-1})^*,z\phi_0z^{-1}\right]=O(t^{-3/2}\mathcal L)
\end{gather}
and
\begin{equation}
\left[(z\phi_{-1}z^{-1})^*,z\phi_{-1}z^{-1}\right]=\left[(y\phi_{-1}y^{-1})^*,y\phi_{-1}y^{-1}\right]+O(t^{-2}\mathcal L).
\end{equation}
Finally, combining the above we get
\begin{equation}
\left[(z\phi z^{-1})^*,z\phi z^{-1}\right]=\theta+P(k)+O(t^{-1-\epsilon}\mathcal L)
\end{equation}
thus
\begin{align}
\dot{z}z^{-1}+\left(\dot{z}z^{-1}\right)^* &= \dot{y}y^{-1}+\left(\dot{y}y^{-1}\right)^*+O(t^{-2}\mathcal L) \\
&=P(k)+O(t^{-2}\mathcal L) \\
&=\left[(z\phi z^{-1})^*,z\phi z^{-1}\right]-\theta+O(t^{-1-\epsilon}\mathcal L)
\end{align}
which completes the proof.
\end{proof}

Let 
\begin{equation}
1=\sum_{\lambda\in\RR^{\infty}}p_\lambda
\end{equation}
be the orthogonal splitting of the identity in $B$ given by the iterated weight filtration.
Disregarding multiplicatively bounded terms coming from the Green's operator, the asymptotic solution of \eqref{gradflow} constructed in the proof above is
\begin{equation}\label{asymp_sol_final}
\sum_{\lambda=(\lambda_1,\ldots,\lambda_n)\in\RR^{\infty}} t^{\lambda_1}(\log t)^{\lambda_2}\cdots \left(\log^{(n-1)}t\right)^{\lambda_n}p_\lambda
\end{equation}
where $\log^{(k)}$ is the $k$-times iterated logarithm.

\begin{coro}
After conjugating $\phi$ by a suitable invertible element in $B$, the function $[C,\infty)\to B$ given by \eqref{asymp_sol_final} is an asymptotic solution of \eqref{gradflow}.
\end{coro}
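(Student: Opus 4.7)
The plan is to prove the corollary by induction on the depth $N$ of the iterated weight filtration, assembling the lemmas of this subsection into the full recursive construction. In the base case $N=0$, the lattice $\Lambda(B,\phi)^0$ is complemented, so $(B,\rho,M,\phi)$ is polystable. By Theorem~\ref{king_thm}, after conjugating $\phi$ by an invertible element of $B$ we can arrange $[\phi^*,\phi]=\rho$, and then the constant function $x\equiv 1$ solves \eqref{gradflow_root}. The sum in \eqref{asymp_sol_final} reduces to a single projector equal to $1$, which agrees with this solution.

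For the inductive step, assume the result in depth $\le N-1$ and let $(B,\rho,M,\phi)$ have depth $N$. First conjugate $\phi$ inside $B$ to arrange the three conditions \eqref{phi_gap}, \eqref{phi0_harm}, \eqref{phi1_harm}, as explained in the discussion preceding Lemma~\ref{lem_log_ansatz}. Form the derived quadruple $(B',\rho',M',\phi')$ of Proposition~\ref{harm_part_prop}; it is semistable of phase $0$ by construction, and by the recursive definition of the iterated weight filtration (Section~\ref{sec_iterated}) applying the construction to $(B',\phi')$ recovers exactly the remaining $N-1$ levels, so its depth is $N-1$. The induction hypothesis applied inside $B'$ then yields, after a further conjugation of $\phi'$ by an invertible element of $B'$, an asymptotic solution $\tilde x(t)$ of \eqref{derived_quad_eq} of the form
\[
\tilde x(t)=\sum_{\mu=(\mu_2,\ldots,\mu_N)\in\RR^\infty}t^{\mu_2}(\log t)^{\mu_3}\cdots\bigl(\log^{(N-2)}t\bigr)^{\mu_N}p'_\mu,
\]
indexed by the iterated weight filtration of $(B',\phi')$. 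Since this conjugation is internal to $B'$, it preserves all three conditions already arranged on $\phi$.

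Apply Lemma~\ref{lem_log_ansatz} to get $y(t):=t^{r/2}\tilde x(\log t)$, which satisfies the intermediate equation without the $-\rho$ term. The lifting lemma then produces $z:=y(1+\tfrac12 G(y^{-1}ky))$, which solves \eqref{gradflow_root} up to terms in $L^1$; Proposition~\ref{sol_up_to_l1} converts this into a genuine asymptotic solution $h=z^*z$ of \eqref{gradflow}. To identify the result with \eqref{asymp_sol_final}, unwind the recursion: at each level the factor $t^{r/2}$ decomposes as $\sum_{\lambda_1} t^{\lambda_1} p_{\lambda_1}$ acting on the outer stage of the weight filtration, while the substitution $t\mapsto \log t$ in $\tilde x$ turns the inner iterated-log factors into the next iterated logarithm. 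Composing $N$ such substitutions yields exactly a sum of products $t^{\lambda_1}(\log t)^{\lambda_2}\cdots(\log^{(N-1)}t)^{\lambda_N}$ against the projectors of the iterated weight filtration of $(B,\phi)$. The Green-operator correction factors $(1+\tfrac12 G(\cdots))$ are bounded, so they do not change the multiplicative asymptotics.

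The main obstacle is the bookkeeping: one has to check that the weight decomposition for $r$ appearing in the factor $t^{r/2}$ genuinely coincides, under the chosen normalization, with the first-coordinate label of the iterated weight filtration, and that the orthogonal splittings produced by successive applications of Proposition~\ref{harm_part_prop} are compatible with the projections $P$ and Green's operator $G$ used in constructing $z$ at each level. This compatibility is exactly what the recursive algebraic definition of the iterated weight filtration guarantees, but making the identification with the closed formula \eqref{asymp_sol_final} clean requires carefully composing the $N$ layers of conjugation and substitution introduced along the induction.
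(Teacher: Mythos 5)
Your proposal matches the paper's (implicit) argument: the corollary there is a summary of the recursive construction built from Proposition~\ref{harm_part_prop}, Lemma~\ref{lem_log_ansatz}, the lifting lemma, and Proposition~\ref{sol_up_to_l1}, with the polystable case handled by Theorem~\ref{king_thm} at the bottom; you simply make that recursion explicit as an induction on the depth $N$. One point to tighten: Lemma~\ref{lem_log_ansatz} and the lifting lemma take an \emph{actual} solution $x$ of \eqref{derived_quad_eq} as input, so rather than pushing your asymptotic $\tilde x$ through them directly, apply them to the unknown actual solution $x$, then use the induction hypothesis together with Proposition~\ref{prop_mono} to sandwich $x^*x$ by the inner iterated-log expression; conjugating by $t^{r/2}$ and absorbing the bounded factor $1+\tfrac12 G(y^{-1}ky)$ then identifies the asymptotics of $z^*z$ with \eqref{asymp_sol_final}, which is exactly the bookkeeping the paper leaves implicit.
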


\begin{thm}\label{thm_asymptotics}
Suppose $B$, $\tau$, $\theta$, $M$, $\phi$ are as in Section~\ref{sec_staralg} and $h$ a solution of \eqref{gradflow}.
For $\lambda=(\lambda_1,\ldots,\lambda_n)\in\RR^{\infty}$ let $p_\pm(\lambda)\in\Lambda(B,\phi)$ be the projector which is the $\lambda$-step of the iterated weight filtration in the polarized lattice $\Lambda(B,\phi)$, then
\begin{equation}
p_+(\lambda)hp_+(\lambda)=O\left(t^{\lambda_1}(\log t)^{\lambda_2}\cdots \left(\log^{(n-1)}t\right)^{\lambda_n}\right)
\end{equation}
and
\begin{equation}
p_-(\lambda)hp_-(\lambda)=o\left(t^{\lambda_1}(\log t)^{\lambda_2}\cdots \left(\log^{(n-1)}t\right)^{\lambda_n}\right).
\end{equation}
\end{thm}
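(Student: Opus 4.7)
The plan is to reduce everything to the explicit asymptotic solution $h_0$ of \eqref{gradflow} constructed in \eqref{asymp_sol_final} and use the sandwich estimate coming from monotonicity. By the corollary to Proposition~\ref{prop_mono}, every actual solution $h$ satisfies
\[
C^{-1} h_0(t) \leq h(t) \leq C \, h_0(t)
\]
for some $C>0$ and all sufficiently large $t$. Conjugation by a self-adjoint projector preserves the order on self-adjoint operators, so for any projector $p\in B$,
\[
C^{-1}\, p\, h_0(t)\, p \leq p\, h(t)\, p \leq C\, p\, h_0(t)\, p.
\]
Hence it suffices to verify the two bounds for $h_0$ itself, after which the conclusion for $h$ is immediate.

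Write $h_0 = x^*x$ with $x(t)=\sum_\mu t^{\mu_1}(\log t)^{\mu_2}\cdots(\log^{(n-1)}t)^{\mu_n}\, p_\mu$, up to a multiplicatively bounded factor $1+\tfrac{1}{2}G(y^{-1}ky)$ that has no effect on asymptotics. The atoms $p_\mu$ are mutually orthogonal, sum to $1$, and by definition satisfy $p_+(\lambda)=\sum_{\mu\leq\lambda} p_\mu$ and $p_-(\lambda)=\sum_{\mu<\lambda} p_\mu$, where $\leq$ is lexicographic on $\RR^\infty$. Conjugating gives
\[
p_+(\lambda)\, h_0(t)\, p_+(\lambda) = \sum_{\mu\leq\lambda} t^{2\mu_1}(\log t)^{2\mu_2}\cdots \, p_\mu \cdot \bigl(1+o(1)\bigr),
\]
and by definition of the lexicographic order every summand is dominated by the $\mu=\lambda$ term, yielding the $O$-bound. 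Replacing $p_+(\lambda)$ by $p_-(\lambda)$ simply removes the top summand $\mu=\lambda$, and all remaining summands grow strictly more slowly in $\RR^\infty$-lex, which is precisely the definition of $o\bigl(t^{\lambda_1}(\log t)^{\lambda_2}\cdots(\log^{(n-1)}t)^{\lambda_n}\bigr)$ for a finite sum.

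One subtlety remains: the construction of $h_0$ required first conjugating $\phi$ by an invertible element $g\in B$ so that \eqref{phi_gap}, \eqref{phi0_harm}, and \eqref{phi1_harm} hold. This corresponds to replacing $h$ by $g^*hg$, which is an automorphism of $\mathcal P$ that changes the concrete matrix representatives of the projectors but not their position in the polarized lattice $\Lambda(B,\phi)$. Because $g$ is bounded with bounded inverse, the $O$ and $o$ bounds are unaffected, and the result descends from the conjugate system to the original.

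I expect the main obstacle to be the careful identification of the projectors $p_\mu$ appearing in \eqref{asymp_sol_final}, which were built inductively by recursively extracting harmonic parts and passing to the smaller $*$-triple $(B',\rho',M',\phi')$, with the projectors of the abstract iterated weight filtration of $\Lambda(B,\phi)$ defined lattice-theoretically in Subsection~\ref{sec_iterated}. Both are $\RR^\infty$-filtrations built by iterating a ``semistable pieces'' operation; one must verify that the analytic operation $(B,\rho,M,\phi)\mapsto(B',\rho',M',\phi')$ corresponds under the map $(B,\phi)\mapsto\Lambda(B,\phi)$ to the algebraic operation $L\mapsto L^{(2)}$, which is essentially guaranteed by relations \eqref{phi0_harm}--\eqref{phi1_harm} and Theorem~\ref{king_thm}, but writing out the induction step cleanly is the only substantive piece of bookkeeping.
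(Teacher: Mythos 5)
Your proposal follows essentially the same route the paper takes: the theorem is not given a separate proof in the paper because it is a direct consequence of the Corollary preceding it (asserting that \eqref{asymp_sol_final} is an asymptotic solution after conjugating $\phi$) together with Proposition~\ref{prop_mono} and its corollary. Your breakdown — sandwich estimate, compatibility of the order with conjugation by $p_\pm(\lambda)$, inspection of the explicit form of $h_0$, and the separate remark on the innocuousness of the initial conjugation by $g$ — is exactly the intended chain of reasoning, and you correctly flag that the real content lives in the Corollary, i.e.\ the inductive identification of the analytically constructed $p_\mu$ with the atoms of the lattice-theoretic iterated weight filtration of $\Lambda(B,\phi)$.

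One arithmetic slip: you write $h_0=x^*x$ with $x(t)=\sum_\mu t^{\mu_1}(\log t)^{\mu_2}\cdots\, p_\mu$, which would give $p_+(\lambda)h_0p_+(\lambda)=O\left(t^{2\lambda_1}(\log t)^{2\lambda_2}\cdots\right)$, a factor of two off from the theorem. In the paper, \eqref{asymp_sol_final} \emph{is already} the candidate $h_0$ (not its square root); the factor $t^{r/2}$ in Lemma~\ref{lem_log_ansatz} lives at the level of $x$, so $x$ carries exponents $\mu_1/2,\mu_2/2,\ldots$, and $h_0=x^*x$ carries $\mu_1,\mu_2,\ldots$ as needed. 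The rest of the argument — in particular that for mutually orthogonal $p_\mu$ the norm of $\sum_{\mu\leq\lambda}c_\mu(t)p_\mu$ is $\max_\mu|c_\mu(t)|$, and that for finitely many iterated-log monomials strictly below $\lambda$ in lexicographic order the maximum is $o$ of the $\lambda$-monomial — is correct as stated.
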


\begin{ex}
The simplest non-trivial example is the representation
\begin{equation}
\CC\xrightarrow{1/\sqrt{2}}\CC
\end{equation}
of the $A_2$ quiver. 
The normalization is chosen so that \eqref{phi0_harm} holds at the second level.
Also, we set $\theta=0$ and $\tau=\tr$.
The equations \eqref{gradflow} are
\begin{equation}
\dot{h_1}/h_1=\frac{1}{2}h_2/h_1, \qquad \dot{h_2}/h_2=-\frac{1}{2}h_2/h_1
\end{equation}
with asymptotic solution
\begin{align}
h_1=t^{1/2},\qquad h_2=t^{-1/2}
\end{align}
which happens to be an exact solution.
\end{ex}

\begin{ex}
Let us look at an example where the weight filtration is iterated.
Namely, take the representation
\begin{equation}
\CC\xrightarrow{1/\sqrt{2}}\CC\xleftarrow{\quad 1\quad}\CC\xrightarrow{1/\sqrt{2}}\CC
\end{equation}
of the $A_4$ zig-zag quiver, again with $\tau=\tr$, $\theta=0$.
The equations \eqref{gradflow} are
\begin{gather}
\dot{h_1}/h_1=\frac{1}{2}h_2/h_1, \qquad \dot{h_2}/h_2=-\frac{1}{2}h_2/h_1-h_2/h_3 \\
\dot{h_3}/h_3=\frac{1}{2}h_4/h_3+h_2/h_3, \qquad \dot{h_4}/h_4=-\frac{1}{2}h_4/h_3
\end{gather}
with asymptotic solution 
\begin{gather}
h_1=t^{1/2}(\log t)^{-1/2}\left(1+(\log t)^{-1}\right),\qquad h_2=t^{-1/2}(\log t)^{-1/2} \\
h_3=t^{1/2}(\log t)^{1/2},\qquad h_4=t^{-1/2}(\log t)^{1/2}\left(1+(\log t)^{-1}\right) 
\end{gather}
which is not an exact solution, but solves \eqref{gradflow} up to terms in $L^1$.
This is what we found in Section~\ref{sec_example} but with slightly different constants resulting from the change of basis on the representation.
\end{ex}

\bibliographystyle{plain}
\bibliography{balanced}

\Addresses

\end{document}